\newcommand*{\tline}{%
  \ifmeasuring@
  \else
    \ifodd\column@
      \expandafter\rlap
    \else
      \expandafter\llap
    \fi
    {%
      \vrule height-1ex depth \dimexpr1ex+.4pt\relax width
      \ifcase\numexpr\column@+1\expandafter\relax
      \maxcolumn@widths
      \fi
    }%
  \fi
}
\renewcommand{\labelenumi}{(\arabic{enumi})}
\crefname{section}{Section}{Sections}
\crefname{subsection}{\S}{\S\S}
\theoremstyle{plain}
\newtheorem{lemma}{Lemma}[section]
\newtheorem{proposition}[lemma]{Proposition}
\newtheorem{corollary}[lemma]{Corollary}
\newtheorem{theorem}[lemma]{Theorem}
\theoremstyle{nonumberplain}
\newtheorem{qfbis}{\Cref{le.quot_filt} bis}
\newtheorem{qf'bis}{\Cref{le.quot_filt'} bis}
\theoremstyle{plain}
\newtheorem{definition}[lemma]{Definition}
\newtheorem{notation}[lemma]{Notation}
\newtheorem{example}[lemma]{Example}
\newtheorem{remark}[lemma]{Remark}
\newtheorem{convention}[lemma]{Convention}
\crefname{definition}{definition}{definitions}
\crefname{ex}{example}{examples}
\crefname{remark}{remark}{remarks}
\crefname{convention}{convention}{conventions}
\crefname{lemma}{lemma}{lemmas}
\crefname{proposition}{proposition}{propositions}
\crefname{corollary}{corollary}{corollaries}
\crefname{theorem}{theorem}{theorems}
\crefname{assumption}{assumption}{Assumptions}
\crefname{equation}{}{}
\theoremstyle{nonumberplain}
\newtheorem{proof}{Proof}
\newtheorem{proof-of-uptri}{Proof of \Cref{pr.up-tri}}
\newtheorem{proof-of-simples}{Proof of \Cref{pr.simples}}
\newtheorem{proof-of-locnoe}{Proof of \Cref{pr.loc-noe}}
\newtheorem{proof-of-exts}{Proof of \Cref{th.exts}}
\newtheorem{proof-of-univ}{Proof of \Cref{th.univ}}
\newtheorem{proof-of-univ-gen}{Proof of \Cref{th.univ-gen}}
\newcommand\bC{\mathbb C}
\newcommand\bK{\mathbb K}
\newcommand\bT{\mathbb T}
\newcommand\bZ{\mathbb Z}
\newcommand\cA{\mathcal A}
\newcommand\cC{\mathcal C}
\newcommand\cD{\mathcal D}
\newcommand\cS{\mathcal S}
\newcommand\fg{\mathfrak{g}}
\newcommand\fk{\mathfrak{k}}
\newcommand\fG{\mathfrak{G}}
\newcommand\al{\aleph}
\newcommand\fgl{\mathfrak{gl}}
\newcommand\fsl{\mathfrak{sl}}
\newcommand\fo{\mathfrak{o}}
\newcommand\fsp{\mathfrak{sp}}
\DeclareMathOperator{\id}{id}
\DeclareMathOperator{\End}{\mathrm{End}}
\DeclareMathOperator{\Hom}{\mathrm{Hom}}
\DeclareMathOperator{\ext}{\mathrm{Ext}}
\DeclareMathOperator{\cor}{\mathrm{cor}}
\DeclareMathOperator{\soc}{\mathrm{soc}}
\DeclareMathOperator{\op}{\mathrm{op}}
\DeclareMathOperator{\usoc}{\underline{\mathrm{soc}}}
\DeclareMathOperator{\sym}{\cat{Sym}}
\newcommand{\cat}[1]{\textsc{#1}}
\newcommand{\qedhere}{\mbox{}\hfill\ensuremath{\blacksquare}}
\title{Representation categories of Mackey Lie algebras as universal monoidal categories}
\author{Alexandru Chirvasitu,\quad   Ivan Penkov}
\begin{document}
\setstcolor{red}

\date{}

\newcommand{\Addresses}{{
  \bigskip
  \footnotesize

  \textsc{Department of Mathematics, University at Buffalo, Buffalo,
    NY 14260-2900, USA}\par\nopagebreak \textit{E-mail address}:
  \texttt{achirvas@buffalo.edu}

  \medskip

  \textsc{Jacobs University Bremen, Campus Ring 1, 28759 Bremen, Germany}\par\nopagebreak
  \textit{E-mail address}: \texttt{i.penkov@jacobs-university.de}
}}

\maketitle

\begin{flushright}
To Yuri Ivanovich Manin on the occasion of his 80th birthday
\end{flushright}

\begin{abstract}
  Let $\bK$ be an algebraically closed field of characteristic $0$.
  We study a monoidal category $\bT_\alpha$ which is universal among
  all symmetric $\bK$-linear monoidal categories generated by two
  objects $A$ and $B$ such that $A$ has a, possibly transfinite,
  filtration.  We construct $\bT_\alpha$ as a category of
  representations of the Lie algebra $\fgl^M(V_*,V)$ consisting of
  endomorphisms of a fixed diagonalizable pairing
  $V_*\otimes V\to \bK$ of vector spaces $V_*$ and $V$ of dimension
  $\alpha$.  Here $\alpha$ is an arbitrary cardinal number.  We
  describe explicitly the simple and the injective objects of
  $\bT_\alpha$ and prove that the category $\bT_\alpha$ is Koszul.  We
  pay special attention to the case where the filtration on $A$ is
  finite.  In this case $\alpha=\aleph_t$ for $t\in\bZ_{\geq 0}$.
\end{abstract}

\noindent {\em Key words: Mackey Lie algebra, tensor module, monoidal category, Koszul
  algebra, semi-artinian, Grothendieck category}

\vspace{.5cm}

\noindent{MSC 2010: 17B65; 17B10; 18D10; 18E15; 16T15; 16S37}


\section*{Introduction}\label{se.intro}

In the last decade, monoidal categories of representations of infinite
matrix algebras have been studied from various points of view.  In
particular, in the paper~\cite{DPS} the category
$\mathbb{T}_{\fsl(\infty)}$ was introduced and investigated in detail.
To recall the definition of this category, fix an algebraically closed
field $\mathbb{K}$ of characteristic $0$ together with a nondegenerate
pairing (bilinear map) $\mathbf{p}:V_*\times V\to\mathbb{K}$ for some
countable-dimensional vector spaces $V_*$ and $V$ over $\mathbb{K}$.
Then $V_*\otimes V$ has an obvious structure of an associative algebra
($\big(\left(v_*\right)_1\otimes
v_1\big)\,\big(\left(v_*\right)_2\otimes
v_2\big)=\mathbf{p}\big(\left(v_*\right)_2,v_1\big)\,\left(v_*\right)_1\otimes
v_2$), and hence also of a Lie algebra.  The Lie subalgebra
$\ker\mathbf{p}\subset V_*\otimes V$ is isomorphic to the Lie algebra
$\fsl(\infty)$ (in fact, the Lie algebra $\fsl(\infty)$ can simply be
defined as $\ker\mathbf{p}$).  A quick way to define the category
$\mathbb{T}_{\fsl(\infty)}$ is to declare it the monoidal category of
all $\ker\mathbf{p}$-subquotients of finite direct sums of tensor
products of the form
$\left(V_*\right)^{\otimes n}\otimes V^{\otimes m}$.  In ~\cite{DPS}
three other equivalent definitions of this category are given: they
are all intrinsic to the Lie algebra $\fsl(\infty)$.  From the point
of view of a representation-theorist, $\mathbb{T}_{\fsl(\infty)}$ is
interesting as it is the ``limit as $q\to\infty$'' of the categories
of finite-dimensional $\fsl(q)$-modules.  Unlike the category of
finite-dimensional $\fsl(q)$-modules, $\mathbb{T}_{\fsl(\infty)}$ is
not a semisimple category.  The simple objects of
$\mathbb{T}_{\fsl(\infty)}$ are parametrized by ordered pairs
$(\mu,\nu)$ of Young diagrams, and, based on earlier work~\cite{PS} by
K. Styrkas and the second author, in~\cite{DPS} the injective objects
of $\mathbb{T}_{\fsl(\infty)}$ have been described and Koszul
self-duality of $\mathbb{T}_{\fsl(\infty)}$ has been established.

In a parallel development, the category $\mathbb{T}_{\fsl(\infty)}$
arose in the work~\cite{SS} of A. Sam and A. Snowden who took a
somewhat different point of view.  In particular, they showed that
$\mathbb{T}_{\fsl(\infty)}$ is universal among $\mathbb{K}$-linear tensor categories (see Convention \ref{cv.ccpl} below) generated by two objects $A$ and
$B$ together with a morphism $A\otimes B\to\mathbbm{1}$ into the
monoidal unit $\mathbbm{1}$ (the field $\mathbb{K}$).  It is important
to note that $\mathbb{T}_{\fsl(\infty)}$ is not a rigid tensor
category, in particular there is no nonzero morphism
$\mathbbm{1}\to V_*\otimes V$, and $\mathbb{T}_{\fsl(\infty)}$ is
universal as a general (nonrigid) $\mathbb{K}$-linear
tensor category.  This universality property of the category
$\mathbb{T}_{\fsl(\infty)}$ has been used in an essential way in the
recent study~\cite{EHS} of abelianizations of Deligne categories.  In addition, the boson-fermion correspondence has been categorified via $\bT_{\fsl(\infty)}$ in \cite{FSP}.

Motivated by representation theory, in~\cite{PS2} a larger Lie
algebra, called Mackey Lie algebra, was introduced.  Let now
$\mathbf{p}:V_*\times V\to\mathbb{K}$ be a nondegenerate pairing for
some vector spaces, not necessarily countable dimensional or having
the same dimension.  A pioneering study of such pairings was
undertaken by G. Mackey in his dissertation~\cite{M}.  The
endomorphisms of the pairing $\mathbf{p}$ form a Lie algebra which we
denote by $\fgl^M(V_*,V)$ and call the \emph{Mackey Lie algebra} of
$\mathbf{p}$: this Lie algebra is defined by formula (\ref{eq.fglM})
below, and has $V_*\otimes V$ as its ideal.

As a next step, in the work~\cite{us} we showed that, if
$\dim V_*=\dim V=\aleph_0$ (i.e., if $V_*$ and $V$ are countable
dimensional as in~\cite{DPS}), a natural category of representations
of $\fgl^M(V_*,V)$ has also a universality property.  More precisely,
in~\cite{us} we consider the category $\mathbb{T}^3_{\fgl^M(V_*,V)}$
of $\fgl^M(V_*,V)$-modules isomorphic to subquotients of finite sums
of tensor products of the form
$\left(V^*\right)^{\otimes n}\otimes V^{\otimes m}$ for $m,n\geq 0$
($V^*$ being the algebraic dual space $\Hom_\mathbb{K}(V,\mathbb{K})$).  Our
result states that $\mathbb{T}^3_{\fgl^M(V_*,V)}$ is universal among (nonrigid) $\mathbb{K}$-linear tensor categories generated
by two objects $A$ and $B$ such that $A$ has a subobject
$A_0\hookrightarrow A$ (in the case of $\mathbb{T}^3_{\fgl^M(V_*,V)}$,
we have $A=V^*=\Hom_\mathbb{K}(V,\mathbb{K})$, $B=V$, and $A_0=V_*$, the
inclusion $V_*\subset V^*$ being induced by $\mathbf{p}$).  The
structure of $\mathbb{T}^3_{\fgl^M(V_*,V)}$, both as an abelian and a
tensor category, is rather elaborate.  Its simple objects are
parametrized by triples $(\lambda,\mu,\nu)$ of Young diagrams, and
in~\cite{us} we compute the Ext-groups between simple objects and
prove Koszul self-duality for $\mathbb{T}^3_{\fgl^M(V_*,V)}$.

All of the above motivates the topic of our current study.  Our main
objective is to construct and study a
$\mathbb{K}$-linear tensor category which is universal among
abstract $\mathbb{K}$-linear tensor categories generated by two objects $A$ and $B$ such
that $A$ has an arbitrary fixed filtration.  In fact, we allow the
filtration on $A$ to be transfinite.  As it turns out, we can
construct such a universal category as a category of tensor
representations for a Mackey Lie algebra $\fgl^M(V_*,V)$, where the
dimension of both $V_*$ and $V$ equals $\alpha=\aleph_{a}$, $a$ being
the ordinal of the maximal proper subobject of $A$ in the fixed
transfinite filtration of $A$.

More precisely, we consider a nondegenerate pairing $\mathbf{p}:V_*\times V\to\bK$ where
$V_*,V$ are $\alpha$-dimensional for an arbitrary cardinal number $\alpha$, and suppose that the pairing is splitting in the sense that there are respective bases
$\big\{v^*_\kappa\big\}$, $\left\{v_{\kappa'}\right\}$ of $V_*$ and $V$ such that
$\mathbf{p}\big(v^*_\kappa,v_{\kappa'}\big)=\delta_{\kappa\kappa'}$
($\delta_{\kappa\kappa'}$ being the Kronecker delta).  The category $\bT_\alpha$ is then the minimal full monoidal subcategory of the
category of $\fg$-modules which contains $V$ and $V^*$ and is
closed with respect to subquotients.  For $\alpha=\aleph_0$, the
category $\bT_{\aleph_0}$ coincides with the category
$\bT^3_{\fgl^M(V_*,V)}$ studied in \cite{us}.   We show that the Grothendieck envelope $\bar{\bT}_{\alpha}$ of $\bT_\alpha$
is an ordered Grothendieck category according to a slightly more general definition than the one given in~\cite{us}, and use this in a crucial way
to deduce that objects of the form
\begin{equation}
	\bigotimes_{s=t}^0\,\left(V^*/V^*_{\beta^+_s}\right)_{\lambda_s}\otimes \left(V^*\right)_\mu\otimes V_\nu
	\label{eq.bigtensor}
\end{equation}
are injective in $\bT_\alpha$.  Here $\left\{\beta_t,\ldots,\beta_1,\beta_0\right\}$ is a finite (possibly empty) set of infinite cardinal
numbers such that $\beta_0<\beta_1<\ldots<\beta_t\leq \alpha$, $\beta_s^+$ stands for the successor cardinal
to $\beta_s$, and $\lambda_t,\ldots,\lambda_1,\lambda_0,\mu,\nu$ are
Young diagrams; $\bullet_\lambda$ denotes the Schur
functor associated with a Young diagram $\lambda$.  We show that the objects (\ref{eq.bigtensor}) have simple socles, and that the so obtained  simple modules exhaust (up to isomorphism) the simple objects of $\bT_\alpha$.  

In the case when $\alpha=\aleph_t$ for some nonnegative integer $t$ , we present an explicit combinatorial formula for the multiplicity of a simple module in an injective hull of another simple module.  This generalizes the corresponding multiplicity formulas from~\cite{PS} and~\cite{us}.  Our next result is that, for a general $\alpha$, the
category $\bT_\alpha$ is Koszul in the sense that its Ext-algebra
\begin{equation*}
	\bigoplus_{T',T\text{ simple},\,p\geq 0}\,\ext^p\left(T',T\right)  
\end{equation*}
is generated in degree one.  In the last section we use the Koszulity of $\mathbb{T}_\alpha$ to prove that $\mathbb{T}_\alpha$ possesses the universality property stated above.  

Finally, we should mention that in Section~\ref{se.ex} we present another application of the more general notion of ordered tensor category introduced in this paper: we point out that the category $\widetilde{\mathrm{Tens}}_{\fg}$ introduced and studied in~\cite{PS1} falls under the new definition, and we prove that its injective objects are nothing but arbitrary direct sums of the indecomposable injectives described in~\cite{PS1}.

\subsection*{Acknowledgements}

The first author was partially supported by NSF grant DMS-1565226.
The second author acknowledges support from DFG through grant PE
980/6-1.

\section{Background}\label{se.prel}

Let $\bK$ be a field.  Except in Section \ref{se.abs}, $\bK$
is assumed to be algebraically closed and of characteristic $0$.  All
vector spaces (including Lie algebras) are assumed to be defined over
$\bK$.  If $U$ is a vector space, we set $U^*=\Hom_\bK(U,\bK)$ and $\End(U)=\End_\bK(U)$.  We also abbreviate $\otimes_\bK$ to $\otimes$.  All additive categories considered are understood to be linear over $\bK$, and all additive functors are assumed to preserve
this structure.   

One way to define the
finitary Lie algebra $\fsl(\infty)$ is as the inductive limit
of a chain of embeddings
	\begin{equation}
		\fg_1\hookrightarrow\fg_2\hookrightarrow\ldots
		\label{eq.inclusions}
	\end{equation}
	where $\fg_q=\fsl(q+1)$.  Similarly, $\fo(\infty)$ and
        $\fsp(\infty)$ can be defined as the inductive limits of
        respective chains (\ref{eq.inclusions}) where $\fg_q=\fo(q)$ or
        $\fg_q=\fsp(2q)$.

	A natural representation of $\fsl(\infty)$, $\fo(\infty)$, and
        $\fsp(\infty)$ is a direct limit of natural representations
        $V_q$ of $\fg_q$.  For $\fg=\fo(\infty)$ (respectively, for
        $\fsp(\infty)$), up to isomorphism, there is a unique natural
        representation $V=\lim\limits_{\longrightarrow} V_q$
        (respectively, $V=\lim\limits_{\longrightarrow} V_{2q}$),
        while for $\fg=\fsl(\infty)$ there are two nonisomorphic
        natural representations
        $V=\lim\limits_{\longrightarrow} V_{q+1}$ and
        $V_*=\lim\limits_{\longrightarrow} V_{q+1}^*$; here, $V_{q+1}$
        denotes the space of column vectors of length $q+1$,
        considered as an $\mathfrak{sl}(q+1)$-module.
%
	
	Let now $V_*$ and $V$ be abstract vector spaces and $\mathbf{p}:V_*\times V\to\bK$
        be a nondegenerate bilinear map, or simply pairing, of the
        vector spaces $V_*$ and $V$.  Note that $\mathbf{p}$ induces injective
        linear operators $V_*\hookrightarrow V^*$ and
        $V\hookrightarrow \left(V_*\right)^*$.  The Mackey Lie algebra $\fgl^M(V_*,V)$
        is by definition the Lie algebra of endomorphisms of the
        pairing $\mathbf{p}$, i.e.,
	\begin{equation}
	\fgl^M(V_*,V)=\big\{x\in\End\left( V_*\right)\,|\,x^*(V)\subset V\big\} = \big\{ y\in \End(V)\,|\,y^*(V_*)\subset V_*\big\}\,,
		\label{eq.fglM}
	\end{equation}
	where here $^*$ indicates dual linear operator.  Note that $V_*\otimes V$ is an ideal in $\fgl^M\left(V_*,V\right)$.

	If $V_*$ and $V$ are both countable dimensional, then it is a
        result of G. Mackey \cite{M} that a nondegenerate pairing
        $\mathbf{p}$ is unique up to isomorphism.  In this case, the
        Mackey Lie algebra $\fgl^M(V_*,V)$ is isomorphic to the Lie
        algebra of infinite matrices with finite rows and columns.
        See \cite{PS2} for results concerning $\fgl^M(V_*,V)$ and its
        representations.
	
	The above result of Mackey provides also an alternative
        definition of the Lie algebra $\mathfrak{sl}(\infty)$.  This
        definition was already mentioned in the Introduction.  If
        $\dim V_*=\dim V= \aleph_0$, the pairing $\mathbf{p}$ is
        unique up to isomorphism of pairings, and we can set
        $\mathfrak{sl}(\infty)=\ker\mathbf{p}$.  Then $V$ and $V_*$
        are the two nonisomorphic natural representations of
        $\mathfrak{sl}(\infty)$ which were introduced above as direct
        limits.  In the rest of the paper we will have both of these
        interpretations of $V$ and $V_*$ in mind.  Note also that $V$
        and $V_*$ admit a pair of dual bases, i.e. bases
        $\left\{v_q\right\}\subset V$,
        $\left\{v_q^*\right\}\subset V_*$ for $q\in\bZ_{>0}$, such
        that $\mathbf{p}\left(v^*_{q'},v_q\right)=\delta_{q'q}$.  In
        what follows we will think of $V_*$ as a subspace of $V^*$ (the embedding $V_*\subset V^*$ being induced by the pairing
        $\mathbf{p}$).  If $V$ is equipped with a nondegenerate
        symmetric (or antisymmetric bilinear form, then
        $\mathfrak{o}(\infty)$ is defined as the Lie algebra
        $\Lambda^2(V)$ and $\mathfrak{sp}(\infty)$ is defined as the
        Lie algebra $S^2(V)$.  The spaces $\Lambda^2(V)$ and $S^2(V)$
        are endowed with respective Lie algebra structures via the
        corresponding forms.
	
	Let $\fg$ be any Lie algebra, $M$ be any $\fg$-module, and
        $\fk\subset \fg$ be a Lie subalgebra.  Recall that $\fk$
        \emph{acts densely on} $M$ if for any finite set of vectors
        $m_1,\ldots,m_q\in M$ and any $g\in\fg$, there is $k\in\fk$
        such that $g\cdot m_s=k\cdot m_s$ for $s=1,\ldots,q$.  Below
        we use the fact that if $\fk$ acts densely on $M$ then $\fk$
        acts densely on any $\fg$-subquotient of the tensor algebra
        $\mathrm{T}(M)$ \cite[Lemma 7.3]{PS2}.

	We recall also that there is a well-defined Schur functor $\bullet_\lambda$ for any Young diagram, or partition, $\lambda=\left(\lambda_1\geq \lambda_2\geq \ldots\geq \lambda_q\right)$,
	$$\bullet_\lambda:\text{Vect}\to\text{Vect}\,,$$
	$\text{Vect}$ being the category of vector spaces over $\bK$.
        By definition, $V_\lambda$ is a direct summand of the tensor
        power $V^{\otimes|\lambda|}$, where
        $|\lambda|=\lambda_1+\ldots+\lambda_q$.  For the precise
        definition see \cite{Fulton}.  If $V$ is a $\fg$-module for
        some Lie algebra $\fg$, then $V_\lambda$ has a natural
        structure of a $\fg$-module (as a $\fg$-submodule of
        $V^{\otimes |\lambda|}$).

        If $\mathcal{C}$ is an abelian category, a \emph{chain} of
        objects of $\mathcal{C}$ is a set of objects
        $\left\{A_\sigma\right\}$ such that, for any pair of objects
        $A_{\sigma_1}$ and $A_{\sigma_2}$, precisely one noninvertible
        monomorphism $A_{\sigma_1}\to A_{\sigma_2}$ or
        $A_{\sigma_2}\to A_{\sigma_1}$ is fixed.  This endows the set
        of indices $\{\sigma\}$ with a linear order:
        $\sigma_1 <\sigma_2$ if a noninvertible monomorphism
        $A_{\sigma_1}\to A_{\sigma_2}$ is fixed.  Given an object $A$
        of $\mathcal{C}$, we say that $A$ is endowed with a
        \emph{transfinite filtration} if a chain of subobjects
        $\left\{A_\sigma\right\}$ of $A$ is given such that the linear
        order on the set of indices $\{\sigma\}$ is a well-order.

        For background on Grothendieck categories we refer the reader
        to \cite[$\S$1.1]{grk} or \cite[$\S$2.8]{groth}. In any such
        category every object $X$ has a \emph{transfinite socle
          filtration}: the socle of $X$, $\soc X$, is the maximal
        semisimple subobject of $X$ (the sum of all semisimple
        subobjects of $X$), and then the socle filtration is built by
        transfinite induction each time taking the pullback in $X$ of
        the socle of the relevant quotient.  We have
$$0 \subset \soc X=\soc^1 X \subset \soc^2 X =\pi_1^{-1}\big(X/(\soc X)\big)\subset\ldots\subset \soc^{\aleph_0}X=\pi^{-1}_{\aleph_0}\left(\lim\limits_{\longrightarrow}\,\left(\soc^qX\right)\right)\subset\ldots$$
where $\pi_1:X\to X/(\soc X)$ and
$\pi_{\aleph_0}:X\to
X/\left(\lim\limits_{\substack{{\longrightarrow}\\{q<\aleph_0}}}\,\left(\soc^qX\right)\right)$
are the canonical projections.  For $q \in\mathbb{Z}_{\geq 1}$, we denote by $\usoc^qX$ the $q$-th layer $\soc^q X/\soc^{q-1} X$ of the transfinite socle filtration of $X$.

\section{Ordered Grothendieck categories}\label{se.abs}

Here we extend the notion of ordered Grothendieck category introduced
in \cite{us} to the infinite-length setting.

First, recall that a family of objects $\left\{Z_\kappa\right\}$ of an
abelian category $\mathcal{C}$ is a \emph{family of generators} if for
any two distinct morphisms $A\overset{\varphi}{\longrightarrow}B$ and
$A\overset{\psi}{\longrightarrow}B$ in the category $\mathcal{C}$,
there is an object $Z_\kappa$ together with a morphism
$\gamma:Z_\kappa\to A$ so that the compositions $\varphi\circ\gamma$
and $\psi\circ\gamma$ are distinct, see \cite[$\S$V.7]{cats} or
\cite[discussion preceding Proposition 1.2.2]{groth}. Next, recall the
following notion (see \cite[$\S$5.6]{groth} or \cite[$\S$41]{BW} for a
discussion in the context of module categories).

\begin{definition}\label{def.sa}
  An object $X$ in a Grothendieck category if {\it semi-artinian} if
  every nonzero quotient of $X$ has nonzero socle.  A Grothendieck
  category is {\it semi-artinian} if it has a set of semi-artinian
  generators.
\end{definition}

Let $\cC$ be Grothendieck category and $(I,{{\preceq}})$ a poset. Let
also $X_i\in \cC$, $i\in I$ be a collection of semi-artinian objects,
and $\cS_i$ be the set of isomorphism classes of simple subobjects of
$X_i$.

Throughout, we assume that the opposite poset $(I,{{\preceq}})^{\op}$
(i.e. {{the set}} $I$ equipped with the partial order opposite to
${{\preceq}}$) is {\it well ordered}.  This means that every nonempty
totally ordered subset of $I$ has a largest element with respect to
${{\preceq}}$.

{The} following definition generalizes Definition 2.1 in~\cite{us}.
\begin{definition}\label{def.ordered}
  The above structure makes $\cC$ an {\it ordered Grothendieck
    category} provided the following conditions hold{{:}}
  \begin{enumerate}[(i)]
  \item every object in $\cC$ is isomorphic to a subquotient of a
    {direct} sum
    $\bigoplus\limits_{ i \in I'\subset I}X_i^{\oplus \gamma_i}$ {for
      some subset $I'\subset I$ and some cardinal numbers $\gamma_i$};
  \item the sets $\cS_i$ are disjoint, and they exhaust the
    isomorphism classes of simple objects of $\cC$;
    \item simple subquotients of $X_i$ which are not subobjects of $\soc X_i$ belong to $\cS_j$ for $j\prec i$;
    \item each $X_i$ decomposes as a direct sum of subobjects with simple socle;
    \item for all $i{{\succ}}j$ the maximal subobject
      $Y_{i{{\succ}}j}\subseteq X_i$ whose simple constituents are in
      various $\cS_k$ {for} $i{{\succeq}} k{{\succ}}j$, is the joint
      kernel of a family of morphisms $X_i\to X_j$.
  \end{enumerate}
\end{definition}

If $U$ is a semisimple subobject in $\soc X_i$, then by $\widetilde{U}$ we
denote the direct summand of $X_i$ such that $\soc \widetilde{U}=U$;
the existence of $\widetilde{U}$ is guaranteed by condition (iv).

%

\begin{remark}\label{re.is-sa}
  An ordered Grothendieck category is semi-artinian.  This follows
  from the observation that condition (i) of \Cref{def.ordered}
  together with our assumption that $X_i$ are semi-artinian ensure
  that $\cC$ is semi-artinian.
  The proof can be found in \cite[41.10 (4)]{BW} (or in the original
  source \cite[27.5, 32.5]{W} cited therein) for module categories;
  the general result is analogous.
\end{remark}

\begin{definition}
  If the $X_i$ in Definition~\ref{def.ordered} are of finite length
  (or, equivalently, if $X_i$ satisfy Definition 2.1 in~\cite{us}), we
  say that $\mathcal{C}$ is a \emph{finite ordered Grothendieck
    category}.
\label{def.finordgroth}
\end{definition}

The next proposition shows that checking the properties (i)--(v) on a
set of objects $\left\{X_i\right\}$ suffices to describe, up to
isomorphism, all injective hulls of simple objects in $\mathcal{C}$ as
direct summands of $X_i$ (cf. \cite[Proposition 2.3]{us}).

\begin{proposition}\label{pr.inj-hull}
  In the setup of \Cref{def.ordered}, for any $i\in I$ and any simple
  object $U\in \cS_i$ the object $\widetilde{U}$ is an injective hull
  of $U$.
\end{proposition}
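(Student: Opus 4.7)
The proposition has two parts: showing that $U$ is essential in $\widetilde{U}$, and showing that $\widetilde{U}$ is injective in $\cC$.

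\emph{Essentiality.} By \Cref{re.is-sa} the category $\cC$ is semi-artinian, so every nonzero subobject $W \subseteq \widetilde{U}$ has nonzero socle; this socle is a semisimple subobject of $\widetilde{U}$, hence lies in $\soc \widetilde{U} = U$, and simplicity of $U$ forces $\soc W = U \subseteq W$. So $U$ is essential in $\widetilde{U}$.

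\emph{Injectivity.} Since $\cC$ is a Grothendieck category, it suffices to show that $\widetilde{U}$ admits no proper essential extension. Suppose $\widetilde{U} \hookrightarrow E$ is essential with $E \neq \widetilde{U}$. Semi-artinianness of $\cC$ produces a simple subobject $T \subseteq E/\widetilde{U}$, whose preimage yields a short exact sequence $0 \to \widetilde{U} \to F \to T \to 0$ with $F \subseteq E$; essentiality of $\widetilde{U}$ in $E$ forces this extension to be non-split, since a splitting would embed $T$ into $E$ with $T \cap \widetilde{U} = 0$. The substantive task is therefore to show that $\ext^1_{\cC}(T, \widetilde{U}) = 0$ for every simple $T$. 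The plan is to mimic \cite[Proposition 2.3]{us} and use conditions (iii)--(v) as follows: (ii) locates $T$ in a unique $\cS_j$; (iii) restricts the simple constituents that can appear in the successive quotients of $\widetilde{U}$; (iv) isolates the summand $\widetilde{T} \subseteq X_j$ of simple socle $T$; and (v) produces a family of morphisms $X_i \to X_j$ whose joint kernel is the maximal subobject of $X_i$ with constituents in $\bigcup_{i \succeq k \succ j} \cS_k$. Combining (iv) and (v) one constructs a retraction $F \to \widetilde{U}$ of any putative non-split extension, contradicting non-splitness and forcing $F \subseteq \widetilde{U}$.

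\emph{Main obstacle.} In \cite[Proposition 2.3]{us} the objects $X_i$ have finite length and a straightforward induction on that length suffices. Here both the socle filtration of $\widetilde{U}$ and the induction along $(I,\preceq)^{\op}$ are in general transfinite, and the hard part is to verify the retraction argument above in this generality. The two auxiliary difficulties are passing through limit ordinals in the socle filtration (addressed via the AB5 axiom, which commutes filtered colimits of subobjects with finite limits in $\cC$) and ensuring that the descent along $(I,\preceq)^{\op}$ prescribed by (v) terminates (guaranteed by the hypothesis that $(I,\preceq)^{\op}$ is well ordered). The assumption that each $X_i$ is semi-artinian ensures that its transfinite socle filtration is exhaustive, so that the induction is well-founded.
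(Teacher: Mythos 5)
Your essentiality argument is sound: semi-artinianness of $\widetilde U$ (inherited from $X_i$) plus the fact that $\soc\widetilde U = U$ (a simple) immediately gives that $U$ meets every nonzero subobject. But the injectivity half is not a proof — it is an itemized list of the conditions from \Cref{def.ordered} with an assertion that they "combine" to produce a retraction, followed by an honest acknowledgement that the transfinite version of this combination is "the hard part." That hard part is precisely what the proposition requires, and nothing in the proposal actually carries it out. In particular, the claim that conditions (iv) and (v) let you construct a retraction $F\to\widetilde U$ of an arbitrary nonsplit extension $0\to\widetilde U\to F\to T\to 0$ is not substantiated: condition (v) gives you a family of morphisms $X_i\to X_j$ with a prescribed joint kernel, but $F$ is not a priori a subobject or quotient of $X_i$, and there is no indicated way to transport those morphisms into a splitting of the given extension.

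The paper's actual proof goes in the opposite direction from what you sketch, and avoids the $\ext^1(T,\widetilde U)=0$ formulation altogether. It takes an arbitrary essential extension $U\subset J$, uses condition (i) to realize $J$ as a subquotient of a sum $X=\bigoplus X_j^{\oplus\gamma_j}$, and reduces to $J$ being a \emph{quotient} of $X$ with socle exactly $U$. It then lets $K\subset X$ be the largest subobject all of whose simple constituents lie in $\cS_k$ for $k\succ i$, observes that $K$ is forced into the kernel of $X\twoheadrightarrow J$ (since $J$ has socle in $\cS_i$), and invokes (v) to get a morphism $X\to X_i^\gamma$ with kernel exactly $K$. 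Conditions (iii) and (iv) then yield a monomorphism $J\hookrightarrow\widetilde U$. This shows directly that $\widetilde U$ dominates every essential extension of $U$, which is what an injective hull is. The direction of travel (embed $J$ \emph{into} $\widetilde U$, rather than retract $F$ \emph{onto} $\widetilde U$) is what makes condition (v) usable; your retraction plan does not explain how (v) would be brought to bear, and the proposal as written leaves the core of the argument unproved.
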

\begin{proof}
  Let $U\subset J$ be an essential extension such that $J$ is a
  subquotient of a direct sum
  $X=\bigoplus\limits_{j\in I'\subset I} X_j^{\oplus {\gamma}_j}$ for
  some cardinal numbers ${\gamma}_j$. It suffices to show that $J$
  admits a monomorphism into $\widetilde{U}$.

  Writing $J$ as a subobject of a suitable quotient $Z$ of $X$, we can
  factor out a direct complement of $U$ in $\soc Z$ thus reducing to
  the case where $Z$ itself has socle $U$. Therefore, upon
  substituting $Z$ for $J$, we can assume that $J$ is a quotient of
  $X$.

  Now consider the largest subobject $K$ of $X$ whose simple
  constituents lie in various $\cS_k$ for $k{\succ} i$. First, since
  no subquotient of $K$ is in $\cS_i$, $K$ is automatically a
  subobject of the kernel of the epimorphism $X\to J$. Additionally,
  condition (v) in \Cref{def.ordered} ensures the existence of a
  morphism from $X$ into a product of $X_i^\gamma$ with kernel equal
  to $K$.

  Condition (iii) of \Cref{def.ordered} and the fact that the socle of
  $J$ is $U\in \cS_i$ now imply that a subobject of a single factor
  $X_i$ of $X_i^\gamma$ admits an epimorphism to $J$. Finally, this
  implies via the decomposition in condition (iv) of
  \Cref{def.ordered} that a subobject of $\widetilde{U}\subset X_i$
  admits an isomorphism with $J$.
\end{proof}

\begin{corollary}\label{cor.indec-inj}
  Under the conditions of \Cref{pr.inj-hull}, the indecomposable injective objects
  of $\cC$ are, up to isomorphism, precisely the indecomposable summands of the various
  objects $X_i$. 
\end{corollary}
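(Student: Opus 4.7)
The plan is to deduce both containments of the claim from \Cref{pr.inj-hull} together with \Cref{re.is-sa}.

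For the direction that every indecomposable direct summand of $X_i$ is injective, I would invoke condition (iv) of \Cref{def.ordered} to decompose $X_i=\bigoplus_k \widetilde{U_k}$ with each $\widetilde{U_k}$ having simple socle $U_k\in\cS_i$.  \Cref{pr.inj-hull} identifies each $\widetilde{U_k}$ with the injective hull $E(U_k)$, hence with an indecomposable injective whose endomorphism ring is local.  The Krull-Schmidt-Azumaya theorem, applicable in any Grothendieck category to decompositions into objects with local endomorphism rings, then ensures that every indecomposable direct summand of $X_i$ is isomorphic to some $\widetilde{U_k}$, and is thus itself an indecomposable injective.

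For the converse, let $J\in\cC$ be indecomposable injective.  By \Cref{re.is-sa}, $\cC$ is semi-artinian, so $\soc J$ is a nonzero semisimple object.  Suppose for contradiction that $\soc J$ admits a decomposition $U_1\oplus U_2$ into two nonzero semisimple subobjects.  Then the finite direct sum $E(U_1)\oplus E(U_2)$ is injective and contains $U_1\oplus U_2$ as an essential subobject, so it is the injective hull of $\soc J$; this injective hull embeds into $J$ as a direct summand, and the indecomposability of $J$ would force a nontrivial splitting $J\cong E(U_1)\oplus E(U_2)$, a contradiction.  Hence $\soc J$ is simple, and by condition (ii) of \Cref{def.ordered} it lies in some $\cS_i$.  \Cref{pr.inj-hull} now identifies $J$ with $\widetilde{\soc J}$, a direct summand of $X_i$.

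The only real subtlety is the step showing that an indecomposable injective must have \emph{simple} socle, which combines the semi-artinian property of $\cC$ with the elementary facts that finite direct sums of injectives remain injective and that $E(U_1\oplus U_2)\cong E(U_1)\oplus E(U_2)$.
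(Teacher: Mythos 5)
Your proposal is correct and follows the same basic route as the paper's (very terse) proof: identify the indecomposable injectives with injective hulls of simples, and then with the $\widetilde U$'s via \Cref{pr.inj-hull}. The paper's argument is a single sentence that silently absorbs both the semi-artinian step (needed to see that an indecomposable injective has nonzero, hence simple, socle) and the Krull--Schmidt--Azumaya step (needed to see that every indecomposable summand of $X_i$ is actually one of the $\widetilde{U_k}$ from the decomposition given by condition~(iv)). You spell both of these out, which is a genuine improvement in rigor. One small detail you gloss over: invoking ``$\soc J\ne 0$'' requires not only that $\cC$ is semi-artinian as a category (a set of semi-artinian generators) but that this forces every object, and in particular $J$, to be a semi-artinian object with nonzero socle; this is standard (and is the content of the reference the paper gives in \Cref{re.is-sa}), but it deserves a pointer. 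Likewise your penultimate step ``$E(U_1)\oplus E(U_2)$ embeds as a direct summand of $J$'' can be cut short: since $\soc J$ is essential in $J$ and $J$ is injective, $J$ itself \emph{is} $E(\soc J)$, so $J\cong E(U_1)\oplus E(U_2)$ directly, contradicting indecomposability.
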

\begin{proof}
  The indecomposable injectives are injective hulls of the simple
  objects, and these are precisely the objects in $\cS_i$, $i\in
  I$. The conclusion now follows from \Cref{pr.inj-hull}. 
\end{proof}

One characteristic of ordered Grothendieck categories that will be
important for us below is a certain ``upper triangular'' character of
Ext-groups between simple objects, which limits the possibilities for such Ext-groups to be nontrivial. 

In order to state the result, we need the following definition (cf.
e.g. \cite[$\S$2.2]{us}).

\begin{definition}\label{def.def}
  Let $i{\preceq} j$ be two elements in a poset $(I,{\preceq})$. The {\it defect}
  $d(i,j)$ is the supremum in $\mathbb{Z}_{\geq0}\cup\{\infty\}$ of the set of
  nonnegative integers $q$ for which there is a chain
  \begin{equation*}
    i=i_0{\prec i_1\prec \cdots\prec} i_q=j
  \end{equation*}
in $I$.
\end{definition}

\begin{remark}\label{re.def}
  Note that for $i{\preceq j\preceq} k$ we have $d(i,j)\ge d(i,k)+d(k,j)$. 
\end{remark}

We can now state the result alluded to above.

\begin{proposition}\label{pr.up-tri}
  Let $T\in \cS_j$ and $T'\in \cS_i$ be two simple objects and suppose
  $\ext^p(T',T)\ne 0$ for some $p\ge 0$. Then $i{\preceq} j$ and $d(i,j)\ge p$.   
\end{proposition}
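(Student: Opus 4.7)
The plan is to induct on $p\ge 0$. The base case $p=0$ is immediate: $\ext^0(T',T)=\Hom(T',T)$ is nonzero between simples only when $T'\simeq T$, whereupon the disjointness clause (ii) of \Cref{def.ordered} gives $i=j$, so $d(i,j)=0$.

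For $p\ge 1$ I would invoke \Cref{pr.inj-hull} to identify the injective hull of $T$ with $\widetilde{T}\subseteq X_j$, and consider the short exact sequence
\[
  0\to T\to\widetilde{T}\to Q\to 0,\qquad Q:=\widetilde{T}/T.
\]
Since $\widetilde{T}$ is injective, the long exact sequence of $\ext$ supplies an epimorphism $\ext^{p-1}(T',Q)\twoheadrightarrow\ext^p(T',T)$, so the hypothesis upgrades to $\ext^{p-1}(T',Q)\ne 0$.

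The crux is a preliminary claim: every simple composition factor of $Q$ lies in $\cS_k$ for some $k\prec j$. I would derive this from condition (iii) of \Cref{def.ordered}, since any simple subquotient of $\widetilde{T}$ appearing above the socle is a simple subquotient of $X_j$ not realized as a subobject of $\soc X_j$, and hence its isomorphism class lies in $\cS_k$ for some $k\prec j$ (the case $k=j$ being ruled out by the disjointness (ii)). Granted the claim, I would transfinite-induct along the socle filtration of $Q$, which exhausts $Q$ because ordered Grothendieck categories are semi-artinian (\Cref{re.is-sa}): choose the least ordinal $\alpha$ with $\ext^{p-1}(T',\soc^\alpha Q)\ne 0$; limit ordinals are ruled out by commuting $\ext$ with the filtered colimit defining $\soc^\alpha Q$, so $\alpha$ must be a successor. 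The short exact sequence of consecutive socle layers, combined with additivity of $\ext^{p-1}(T',-)$ over the direct sum of simples constituting $\usoc^\alpha Q$, then produces a single simple summand $T''\in\cS_k$ with $k\prec j$ and $\ext^{p-1}(T',T'')\ne 0$. Applying the inductive hypothesis to this $T''$ and combining $d(i,k)\ge p-1$ with $d(k,j)\ge 1$ via \Cref{re.def} yields
\[
  d(i,j)\ge d(i,k)+d(k,j)\ge p,
\]
while $i\preceq k\prec j$ gives $i\preceq j$.

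The main obstacle is the transfinite extraction step: both the colimit commutation at limit ordinals and the direct-sum commutation at the semisimple layer amount to a compactness/continuity property of $\ext^{p-1}(T',-)$ for the simple object $T'$. This is automatic in a locally noetherian Grothendieck category, but in the abstract setup of \Cref{def.ordered} it warrants explicit verification, or else an alternate argument that directly extracts a finite-length subobject of $Q$ supporting the Ext class.
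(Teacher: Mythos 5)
Your argument is essentially the paper's own: pass to the corona via the short exact sequence $0\to T\to\widetilde{T}\to\cor T\to 0$, use condition (iii) of \Cref{def.ordered} to place the simple constituents of $\cor T$ strictly below $j$, and combine defects via \Cref{re.def}. The paper iterates the corona until $p$ is whittled down to $1$, whereas you do a single step and induct on $p$, but this is a cosmetic reorganization of the same reduction.

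The one place you are more careful is also the one place the paper is not. The clause ``some of these indices $\ell$ will be strictly larger than $i$ (because of the nonvanishing of the right-hand side $\ext^1$\ldots)'' is exactly the extraction you flag: from $\ext^{p-1}(T',\cor T)\ne 0$ one passes, without comment, to $\ext^{p-1}(T',T'')\ne 0$ for some simple constituent $T''$ of $\cor T$. Your worry is well founded: in the generality of \Cref{def.ordered}, where $\cC$ is only assumed semi-artinian, $\ext^q(T',-)$ for a simple $T'$ need not commute with the arbitrary direct sums at a semisimple socle layer, nor with the filtered colimit at a limit ordinal, so the extraction is not automatic. This is harmless in the locally noetherian instances ($\cC_\fg$ of Section~3, or $\overline{\bT}_{\aleph_t}$ where all $X_i$ have finite length), but for a general ordered Grothendieck category — or for $\overline{\bT}_\alpha$ with $\alpha$ arbitrary, where $V^*$ is not noetherian — the paper's proof leaves this step unjustified. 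You have thus surfaced a genuine lacuna in the published argument rather than introduced a new gap of your own; the fix, as you suggest, is either a compactness lemma for $\ext^q(T',-)$ with $T'$ simple, or a d\'evissage showing a nonzero Ext class is detected on some finite-length subobject of $\cor T$.
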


Before delving into the proof, it will be convenient to introduce a notation and a definition. 

\begin{notation}\label{not.inj}
  For an arbitrary object $U\in \cC$ we denote by $\widetilde{U}$ {an
  injective hull of $U$ in} $\cC$.  By Proposition \ref{pr.inj-hull}, this is consistent with the previous usage of the notation $\widetilde{U}$.
\end{notation}

\begin{definition}\label{def.corona}
  For an object $U\in \cC$ its {\it corona} $\cor U$ in $\cC$ is
  $\widetilde{U}/U$. 
\end{definition}

\begin{proof-of-uptri}
The case $p=0$ is not interesting, so suppose $p\ge 1$. If $p=1$, then
the desired conclusion (namely, $i\prec j$) follows from condition (iii) of
\Cref{def.ordered}. 

Now suppose $p>1$. Then the long exact sequence of Ext-groups corresponding to the exact sequence
$$0 \to T\to \tilde{T}\to \cor T \to 0$$
yields an identification
\begin{equation}\label{eq:p_1}
  \ext^p(T',T)\cong \ext^{p-1}(T',\cor T). 
\end{equation}
By condition (iii) of \Cref{def.ordered} again, all simple
subquotients of
$\cor T$ belong to various $\cS_{\ell}$ for ${\ell \prec} j$.

If $p-1=1$, then some of these indices ${\ell}$ will be strictly
larger than $i$ (because of the nonvanishing of the right-hand side
$\ext^1$ of \Cref{eq:p_1}).  Therefore
\begin{equation*}
  i {\prec \ell \prec} j\,,
\end{equation*}
in particular, $i\prec j$ and $d(i,j)\geq 2$.
Otherwise, we can continue the process, passing to the double corona
$\cor(\cor T )$, until $p$ has been whittled down to $1$.
\end{proof-of-uptri}

The reason why we referred to the result of \Cref{pr.up-tri} as `upper
triangularity' is the following fragment of the statement, which we
isolate for emphasis; it says that nonvanishing Ext-functors are
unidirectional with respect to the poset $(I,{\preceq})$.

\begin{corollary}\label{cor.up-tri}
  If $i{\succ}j\in I$ and $T\in \cS_j$, $T'\in \cS_i$, then
\begin{equation*}
  \ext^p(T',T)=0\text{ for } p>0\,.
\end{equation*} \qedhere
\end{corollary}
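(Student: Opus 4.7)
The plan is to derive \Cref{cor.up-tri} directly from \Cref{pr.up-tri} by contraposition, without re-running the long-exact-sequence argument. The point is that \Cref{pr.up-tri} already bundles together, for any simples $T\in\cS_j$ and $T'\in\cS_i$, the two conclusions that $\ext^p(T',T)\ne 0$ forces $i\preceq j$ (directionality) and $d(i,j)\ge p$ (length bound); the corollary is just the directionality half, applied in the contrapositive.

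So the proof I would write amounts to the following observation. Suppose towards a contradiction that $\ext^p(T',T)\ne 0$ for some $p>0$, with $T\in\cS_j$, $T'\in\cS_i$, and $i\succ j$. Then \Cref{pr.up-tri} yields $i\preceq j$. Together with $j\prec i$ this contradicts the antisymmetry of the partial order $(I,\preceq)$, so the supposed nonzero Ext must vanish. Hence $\ext^p(T',T)=0$ for every $p>0$, as claimed.

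The only step one might flag is checking that the hypotheses of \Cref{pr.up-tri} indeed apply, which they do: $T\in\cS_j$ and $T'\in\cS_i$ are the exact hypotheses of that proposition. There is no real obstacle here; the corollary is purely a bookkeeping consequence of the previous proposition and the asymmetry of the strict order $\prec$.
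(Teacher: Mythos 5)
Your proof is correct and matches the paper's intent: the corollary is stated with \qedhere precisely because it is an immediate restatement of the directionality half of \Cref{pr.up-tri}, and unpacking the contrapositive as you do — nonvanishing $\ext^p$ would force $i\preceq j$, contradicting $i\succ j$ by antisymmetry of $\preceq$ — is exactly the intended argument.
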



Note that, although all indecomposable injective objects of $\mathcal{C}$ are described by Corollary~\ref{cor.indec-inj}, arbitrary injective objects need in general not be sums of
indecomposable injectives. We now identify some
sufficient conditions that ensure that $\cC$ is better behaved in this
sense; we then apply these results to specific ordered Grothendieck categories.

\begin{proposition}\label{pr.loc-noe}
  If each $X_i$ is a union of its finite-length subobjects then, up to isomorphism,  the
  injective objects in $\cC$ are precisely arbitrary direct sums of
  indecomposable direct summands of the $X_i$. 
\end{proposition}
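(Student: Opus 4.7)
The plan is to deduce from the hypothesis that $\cC$ is locally finite (and hence locally noetherian), and then appeal to the classical theorem of Matlis--Papp--Gabriel on decompositions of injectives in such categories.

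First I would verify that every object $Y\in\cC$ is a directed union of its finite-length subobjects. By condition (i) of \Cref{def.ordered} one can write $Y\cong N/K$ for some $K\subseteq N\subseteq X:=\bigoplus_{i\in I'}X_i^{\oplus\gamma_i}$. By hypothesis each $X_i$ is a directed union of its finite-length subobjects, so $X$ is itself a directed union of finite-length subobjects $X_\beta$ (formed by taking finite direct sums of finite-length subobjects of the individual summands). Since filtered colimits are exact in a Grothendieck category, intersection with a directed union remains a directed union, whence
\[
N \;=\; \bigcup_\beta\,(N\cap X_\beta),
\]
with each $N\cap X_\beta$ of finite length as a subobject of $X_\beta$. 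Passing to the quotient by $K$ and using that epimorphic images commute with directed colimits then exhibits $Y$ as the directed union of the finite-length subobjects $(N\cap X_\beta+K)/K$.

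Having established local finiteness, I would invoke the classical result that in a locally noetherian Grothendieck category every direct sum of injective objects is injective, and every injective object decomposes as a direct sum of indecomposable injectives. Combined with \Cref{cor.indec-inj}, which identifies the indecomposable injectives of $\cC$ up to isomorphism with the indecomposable direct summands of the various $X_i$, this yields both directions of the statement: any injective in $\cC$ is a direct sum of such indecomposable summands, and conversely any such direct sum is injective (since each indecomposable summand of an $X_i$ is itself injective).

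The main obstacle is the first step, namely the verification of local finiteness from condition (i) of \Cref{def.ordered}; once that is secured, the remainder is a routine invocation of the Gabriel theory of locally noetherian Grothendieck categories, together with the concrete description of indecomposable injectives already afforded by \Cref{cor.indec-inj}.
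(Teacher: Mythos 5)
Your proposal is correct and follows essentially the same route as the paper: establish local noetherianity from the hypothesis, then invoke the Gabriel--Matlis--Papp theorem on injectives in locally noetherian Grothendieck categories (recorded here as \Cref{pr.loc-noe-aux}), and combine with \Cref{cor.indec-inj}. The only difference is that the paper dispenses with the first step in one line (the finite-length subquotients of the $X_i$ form a set of noetherian generators), whereas you spell out the stronger local-finiteness claim via AB5 and the directed-union argument; this added detail is sound and harmless.
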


Before going into the proof, recall the following notions (see
e.g. \cite[$\S$5.7 and $\S$5.8]{groth}).


\begin{definition}\label{def.loc-noe}
  Let $X$ be an object of a Grothendieck category $\cC$.
  \begin{enumerate}
   \item $X$ is {\it noetherian} if it satisfies the ascending chain condition on subobjects;
   \item $\cC$ is {\it locally noetherian} if it has a set of
      noetherian generators.
  \end{enumerate}
\end{definition}


The reason why \Cref{def.loc-noe} is relevant to \Cref{pr.loc-noe} is
that it precisely captures the conditions that give us the kind of
control over arbitrary injective objects alluded to above, as the
following result shows (this is an abbreviated version of
\cite[Theorems 5.8.7, 5.8.11]{groth}).

\begin{proposition}\label{pr.loc-noe-aux}
For a Grothendieck category $\cC$ the following conditions are equivalent{:}
\begin{enumerate}
  \item $\cC$ is locally noetherian;
  \item {the injective objects in $\cC$} are precisely the {arbitrary}
    direct sums of indecomposable injective objects.
\end{enumerate}
\qedhere
\end{proposition}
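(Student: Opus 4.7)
\begin{proof-of-locnoe}
The plan is to verify that $\cC$ is locally noetherian in the sense of \Cref{def.loc-noe} and then invoke \Cref{pr.loc-noe-aux} together with \Cref{cor.indec-inj}: the former identifies the injectives of a locally noetherian Grothendieck category with arbitrary direct sums of indecomposable injectives, and the latter identifies the indecomposable injectives of $\cC$ with the indecomposable summands of the various $X_i$.

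First, I would promote the hypothesis on the $X_i$ to arbitrary objects of $\cC$. Any direct sum $\bigoplus_i X_i^{\oplus \gamma_i}$ is a filtered union of finite-length subobjects, namely of finite direct sums of finite-length subobjects of finitely many $X_i$. Formation of subobjects and of quotients is compatible with filtered unions in any Grothendieck category: for $X=\bigcup_\alpha X_\alpha$ and $Y\subseteq X$ one has $Y=\bigcup_\alpha(Y\cap X_\alpha)$, and similarly $X/K=\bigcup_\alpha X_\alpha/(K\cap X_\alpha)$. Since subobjects and quotients of finite-length objects are again of finite length, condition~(i) of \Cref{def.ordered} then shows that every object of $\cC$ is a filtered union of its finite-length subobjects.

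Second, I would use this to exhibit a set $\cG$ of noetherian generators for $\cC$: take $\cG$ to be the collection of all finite-length subobjects of the various $X_i$, a set because each $X_i$ has a set of subobjects. Every element of $\cG$ is noetherian. For generation, given a proper monomorphism $A'\hookrightarrow A$, the previous paragraph realizes $A$ as a filtered union of subobjects each expressible as a quotient of a finite direct sum of members of $\cG$; some such piece must escape $A'$, yielding a morphism from an element of $\cG$ into $A$ that does not factor through $A'$. Hence $\cC$ is locally noetherian, and \Cref{pr.loc-noe-aux} combined with \Cref{cor.indec-inj} concludes the argument as outlined above.

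The main obstacle, as I see it, lies in the bookkeeping of the second paragraph: the compatibility of filtered unions with the formation of subobjects and quotients must be handled carefully in order to transfer the ``union of finite-length subobjects'' property from the $X_i$ to an arbitrary object of $\cC$ via condition~(i) of \Cref{def.ordered}. Once that is secured, extracting a set of noetherian generators and invoking the cited results is formal.
\end{proof-of-locnoe}
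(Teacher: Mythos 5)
Your proposal does not actually prove the statement at hand. The statement is \Cref{pr.loc-noe-aux}: the equivalence, for an \emph{arbitrary} Grothendieck category, between being locally noetherian and having its injective objects be precisely the arbitrary direct sums of indecomposable injectives. What you prove instead is \Cref{pr.loc-noe} (that the ordered Grothendieck category whose $X_i$ are unions of their finite-length subobjects has injectives equal to direct sums of indecomposable summands of the $X_i$), and you do so by \emph{invoking} \Cref{pr.loc-noe-aux} as a known result --- that is, by assuming exactly the statement you were asked to establish. As a proof of \Cref{pr.loc-noe-aux} this is circular; no argument is offered for either implication of the equivalence, and the general statement makes no reference to the objects $X_i$ or to \Cref{def.ordered}, so the bookkeeping about filtered unions of finite-length subobjects, while fine, is beside the point.

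A genuine proof of \Cref{pr.loc-noe-aux} has substantive content. For the direction from (1) to (2) one needs that in a locally noetherian Grothendieck category an arbitrary direct sum of injective objects is again injective (a Baer-type criterion using the noetherian generators), and that every injective object decomposes as a direct sum of indecomposable injectives (injective hulls of uniform subobjects together with a Zorn's lemma maximality argument); the direction from (2) to (1) is the categorical analogue of the Matlis--Papp converse. The paper does not reprove any of this: it quotes the result as an abbreviated form of Theorems 5.8.7 and 5.8.11 of \cite{groth}. Incidentally, the argument you do give --- the finite-length subobjects of the $X_i$ form a set of noetherian generators, then apply \Cref{pr.loc-noe-aux} and \Cref{cor.indec-inj} --- is essentially the paper's own proof of \Cref{pr.loc-noe}, so it would be acceptable for that proposition; it is simply not a proof of the statement in question.
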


\begin{proof-of-locnoe}
The hypothesis ensures that the finite-length subquotients of the
$X_i$ form a set of generators, and hence $\cC$ is locally noetherian. Our claim follows now from \Cref{pr.loc-noe-aux}. 
\end{proof-of-locnoe}

We end this section with a discussion of how the present material
relates to the notion of a {\it highest weight category} in the sense of \cite[Definition 3.1]{CPS}. First, note that \Cref{def.ordered}
specializes to \cite[Definition 2.1]{us} when the poset $I$ has the
property that down-sets
\begin{equation*}
  I_{{\preceq} i}:=\{j\in I\ |\ j{\preceq} i\}
\end{equation*}
are finite. Moreover, \cite[Proposition 2.16]{us} shows that in that
case an ordered Grothendieck category is a highest weight category. That result extends virtually
verbatim to the present setting provided $I$ is {\it
  interval-finite}, i.e. all intervals
\begin{equation*}
  [i,k] = \{j\in I\ |\ i{\preceq j\preceq} k\},\ i,k\in I
\end{equation*}
are finite. We record the resulting statement here.

\begin{proposition}\label{pr.hw}
  An ordered Grothendieck category based on an interval-finite poset
  $(I,{\preceq})$ is a highest weight category. \qedhere
\end{proposition}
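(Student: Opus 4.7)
The plan is to follow the proof of \cite[Proposition 2.16]{us} step by step, replacing the appeal to finiteness of the down-sets $I_{\preceq i}$ by appeals to finiteness of the intervals $[j,i]$. I take the weight poset to be $\Lambda:=\bigsqcup_{i\in I}\cS_i$, endowed with the partial order transported from ${\preceq}$: simples within a single $\cS_i$ are incomparable, while for $T\in \cS_i$ and $T'\in\cS_j$ I declare $T'\preceq T$ precisely when $j\preceq i$. The injective hull of the simple $T\in\cS_i$ is $\widetilde{T}\subseteq X_i$ provided by \Cref{pr.inj-hull}, and I define the costandard object $\nabla(T)$ as the largest subobject of $\widetilde{T}$ whose composition factors all lie in $\cS_i$.

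The axioms of \cite[Definition 3.1]{CPS} that I then need to verify are: (a) $\nabla(T)$ has simple socle $T$, appears with composition multiplicity one at $T$, and has all other composition factors strictly below $T$ in $\Lambda$; and (b) the injective $\widetilde{T}$ carries a filtration by costandards $\nabla(T')$ with $T'\succeq T$, with $\nabla(T)$ occurring at the bottom with multiplicity one. Item (a) is immediate from conditions (iii) and (iv) of \Cref{def.ordered} applied inside $X_i$. To build the filtration in (b) I proceed inductively from the bottom: given an iterated costandard extension $F\subseteq\widetilde{T}$, the socle of $\widetilde{T}/F$ is a direct sum of simples $T'\in\cS_j$ with $j\prec i$ by condition (iii) of \Cref{def.ordered}, and pulling each such $\nabla(T')$ back through the quotient map extends $F$ by one more layer.

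The main obstacle is that this construction is a priori transfinite, and one must argue both that it exhausts $\widetilde{T}$ and that every costandard $\nabla(T')$ occurs with finite multiplicity. This is precisely where interval-finiteness plays the role that finiteness of down-sets played in \cite{us}: any composition factor of $\widetilde{T}$ is labeled by some $j\preceq i$, and because the interval $[j,i]\subseteq I$ is finite, only finitely many costandards indexed in that interval can contribute, so the multiplicities are finite. A transfinite induction stratified by the defect $d(\,\cdot\,,i)$ of \Cref{def.def} then terminates within the finite interval $[j,i]$ for each fixed $j$, giving exhaustion. The residual Ext-vanishing axioms between $\Delta$- and $\nabla$-filtered objects follow from \Cref{pr.up-tri} exactly as in \cite[Proposition 2.16]{us}.
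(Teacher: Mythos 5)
Your definition of the costandard object is where the argument breaks down. By conditions (iii) and (iv) of \Cref{def.ordered}, the only composition factor of $\widetilde{T}\subseteq X_i$ lying in $\cS_i$ is $T$ itself, with multiplicity one in the socle, so ``the largest subobject of $\widetilde{T}$ whose composition factors all lie in $\cS_i$'' is just $T$. With the CPS order you declare ($T'\preceq T$ iff $j\preceq i$), axiom (c) of \cite[Definition 3.1]{CPS} then requires a filtration of the injective hull $\widetilde{T}$ with $\nabla(T)$ at the bottom and all higher layers $\nabla(T')$ for $T'\succ T$. But your own inductive step pulls back simples $T'\in\cS_j$ with $j\prec i$ --- i.e.\ $T'\prec T$ --- from the socle of each successive quotient $\widetilde{T}/F$, so the filtration you construct runs in the direction opposite to what your chosen order demands. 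The argument is internally inconsistent.

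For the order you fixed, the correct costandard object is $\nabla(T)=\widetilde{T}$ itself: condition (iii) guarantees that \emph{every} composition factor of $\widetilde{T}$ lies in some $\cS_j$ with $j\preceq i$, so the largest subobject whose factors are indexed by $j\preceq i$ is all of $\widetilde{T}$. Then $T$ is the simple socle occurring with multiplicity one, all other factors lie strictly below, and the required costandard filtration of $I(T)=\widetilde{T}$ is simply $0\subset\nabla(T)=\widetilde{T}$; no transfinite construction is needed, and this is what carries over from \cite[Proposition 2.16]{us}. Interval-finiteness enters for the CPS finiteness requirements rather than for exhaustion --- and there your argument is also incomplete: finiteness of the interval $[j,i]$ bounds the number of indices $j$ that can occur, not the multiplicity $[\widetilde{T}:T']$ of a fixed simple $T'\in\cS_j$ in $\widetilde{T}$, so that clause of \cite[Definition 3.1]{CPS}(b) would still need a separate justification. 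Lastly, the ``Ext-vanishing between $\Delta$- and $\nabla$-filtered objects'' you invoke at the end is a consequence of the highest weight axioms, not one of them, so it is not something you need to verify here.
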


\section{A first application}\label{se.ex}

Before moving on to our main object of study, we {would like to point out that the material in
  \Cref{se.abs} applies} to an interesting category studied in
\cite{PS1}. Specifically, recall that for
\begin{equation*}
  \fg=\fsl(\infty), \fo(\infty), \fsp(\infty)
\end{equation*}
the category $\widetilde{\mathrm{Tens}}_\fg$ is defined as
the full subcategory of $\fg\text{-mod}$ consisting of integrable
modules $M$ of finite Loewy length such that the algebraic dual
$M^*$ is also {an integrable
  $\fg$-module} of finite Loewy length. It can be shown that
$\widetilde{\mathrm{Tens}}_\fg$ is closed under dualization, taking
subobjects, quotient objects, and extensions (and hence also finite
direct sums).

\begin{definition}
  For $\fg$ as above, we denote by $\cC_\fg$ the smallest full, exact Grothendieck subcategory of $\fg\text{-mod}$ containing $\widetilde{\mathrm{Tens}}_\fg$. 
\end{definition}

  The category $\cC_\fg$ is simply the full subcategory of $\fg\text{-mod}$ whose objects are {sums} of objects in $\widetilde{\mathrm{Tens}}_\fg$. 

\begin{remark}\label{re.sigma}
For an algebra $A$, the smallest Grothendieck category of modules containing a given $A$-module $M$ (constructed essentially as we have just described) is sometimes denoted by $\sigma(M)$ in the literature{,} e.g. \cite[$\S$41]{BW}.  We can think of $\widetilde{\mathrm{Tens}}_\fg$ as $\sigma(M)$ where $A$ is the enveloping algebra of $\fg$ and $M$ is the direct sum of a set of generators for $\cC_\fg$.
\end{remark}

We now explain how $\cC_\fg$ fits into the framework of \Cref{se.abs}.
Recall~\cite{DPS} that $\bT_\fg$ is the full subcategory of $\fg\text{-mod}$ which consists of $\fg$-modules isomorphic to finite-length subquotients of finite direct sums of the form $\mathrm{T}\left(V\oplus V_*\right)^{\oplus q}$ for $q\in\mathbb{Z}_{>0}$ (for $\mathfrak{g}=\fo(\infty),\fsp(\infty)$, one can replace $\mathrm{T}\left(V\oplus V_*\right)^{\oplus q}$ simply by $\mathrm{T}(V)^{\oplus q}$).  
The category $\bT_\fg$ is equipped with an auto-equivalence
$M\mapsto M_*$ (which is the identity in the case of $\fg=\fo$ or
$\fsp$) induced by the automorphism of $\fg$ arising from switching
$V$ and $V_*$. Let $F$ denote the composition of functors
\begin{equation*}
  M\mapsto {M_*\mapsto} (M_*)^*\,.
\end{equation*}

The poset $(I,{\preceq})$, relevant for the category $\mathcal{C}_\fg$, consists of all pairs $(n,m)$ of nonnegative
integers, where
\begin{equation*}
  (n,m) {\preceq} (n',m')\iff n\le n' \text{ and } m\le m'.
\end{equation*}
{For} $(n,m)\in I$ we define 
\begin{equation*}
  X_{n,m} := F\big((V_*)^{\otimes n}\otimes V^{\otimes m}\big). 
\end{equation*}

%
%
%

Then the contents of
\cite[$\S$6]{PS1} amount to the fact that $\cC_\fg$ satisfies the
conditions of \Cref{def.ordered}; we leave the easy verification to the
reader. Condition (v), for instance, which is perhaps the least
obvious, follows from \cite[Lemma 6.6]{PS1} by taking the family of morphisms
required by condition (v) to be the family of all morphisms $X_i \to X_j$ where $i=(n',m') \succ j=(n,m)$.

%
%
%

The poset $(I,\preceq)$ is interval-finite, hence $\mathcal{C}_\fg$ is a highest-weight category according to Proposition~\ref{pr.hw}.  

Next, note that the finite-length subobjects of finite direct sums of objects $X_{n,m}$ form a set of
generators for {$\widetilde{\mathrm{Tens}}_\fg$}, and hence the same is true of $\cC_\fg$ by the
definition of the latter. This shows that $\cC_\fg$ is locally
noetherian in the sense of \Cref{def.loc-noe}, and {therefore}
according to \Cref{pr.loc-noe-aux} we have

\begin{proposition}
  The injective objects in $\cC_\fg$ are precisely {arbitrary} direct sums
  of indecomposable {injectives.}  \qedhere
\label{prop_inj}
\end{proposition}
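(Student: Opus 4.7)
The plan is to apply Proposition~\ref{pr.loc-noe-aux}. The discussion preceding the statement has essentially already reduced the claim to verifying that $\cC_\fg$ is locally noetherian in the sense of Definition~\ref{def.loc-noe}, so the proof consists of making that reduction precise and invoking the equivalence $(1)\Leftrightarrow(2)$ of Proposition~\ref{pr.loc-noe-aux}.

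First I would collect a set of generators. By construction each $X_{n,m}=F\bigl((V_*)^{\otimes n}\otimes V^{\otimes m}\bigr)$ lies in $\widetilde{\mathrm{Tens}}_\fg$, hence has finite Loewy length with socle layers consisting of simple objects of $\bT_\fg$. These simple constituents are of finite length as $\fg$-modules, so each $X_{n,m}$ is the directed union of the subobjects generated by finite collections of simple constituents in its (finite) socle filtration, and each such subobject is a finite-length extension of finite-length pieces, hence itself of finite length. Consequently the finite-length subobjects of direct sums of the $X_{n,m}$ generate $\widetilde{\mathrm{Tens}}_\fg$; since $\cC_\fg$ is by definition the smallest Grothendieck subcategory of $\fg\text{-mod}$ containing $\widetilde{\mathrm{Tens}}_\fg$ (cf.~Remark~\ref{re.sigma}), the same family generates $\cC_\fg$.

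Each member of this generating family, being of finite length, is certainly noetherian in the sense of Definition~\ref{def.loc-noe}. Thus $\cC_\fg$ admits a set of noetherian generators and is therefore locally noetherian. The implication $(1)\Rightarrow(2)$ of Proposition~\ref{pr.loc-noe-aux} now delivers the conclusion: every injective of $\cC_\fg$ is an arbitrary direct sum of indecomposable injectives. Conversely, indecomposable injectives of $\cC_\fg$ exist (they are the injective hulls of simple objects, classified by Corollary~\ref{cor.indec-inj} as summands of the $X_{n,m}$), and in a locally noetherian Grothendieck category arbitrary direct sums of injectives are injective, which gives the reverse inclusion.

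The only mild obstacle is the verification that the finite-length subobjects of the $X_{n,m}$ really do generate $\widetilde{\mathrm{Tens}}_\fg$; this rests on the finite-Loewy-length hypothesis built into the definition of $\widetilde{\mathrm{Tens}}_\fg$ together with the description of its socle layers via $\bT_\fg$ in~\cite{PS1}. Once this is granted, the rest of the argument is a direct appeal to Proposition~\ref{pr.loc-noe-aux}.
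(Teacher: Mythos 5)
Your overall strategy matches the paper's: both arguments reduce the statement to local noetherianness of $\cC_\fg$ and then invoke Proposition~\ref{pr.loc-noe-aux}. However, your justification for the key step---that each $X_{n,m}$ is a directed union of its finite-length subobjects---is not valid as written. Finite Loewy length together with socle layers consisting of (finite-length, e.g.\ simple) modules does \emph{not} by itself imply that every element lies in a finite-length subobject: for instance, the local ring $R=k[x_i : i\in\bN]/(x_ix_j)$ has Loewy length $2$ and semisimple socle, yet $R$ as a module over itself is cyclic and is not a union of proper (let alone finite-length) submodules. The phrase ``subobjects generated by finite collections of simple constituents in its socle filtration'' also conflates subquotients with subobjects: the simple pieces of the higher socle layers are not subobjects of $X_{n,m}$, so ``generating'' subobjects from them is not a well-defined operation. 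The paper sidesteps all of this by simply \emph{asserting} that finite-length subobjects of finite direct sums of the $X_{n,m}$ generate $\widetilde{\mathrm{Tens}}_\fg$, which relies implicitly on the concrete structure of the $X_{n,m}$ and their socle filtrations established in~\cite{PS1} (not merely on the abstract finite-Loewy-length hypothesis). You do acknowledge in your closing paragraph that this step ``rests on\ldots the description of its socle layers via $\bT_\fg$ in~\cite{PS1},'' which is the right instinct, but the intermediate reasoning you offer in support of it is incorrect and should be replaced by a direct appeal to the results of~\cite{PS1}. Once that generation claim is granted, the rest of your argument (noetherian generators $\Rightarrow$ local noetherianness $\Rightarrow$ Proposition~\ref{pr.loc-noe-aux}) is correct and coincides with the paper's.
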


{In \cite[Corollary 6.7]{PS1}, the indecomposable injectives in the
  category $\widetilde{\mathrm{Tens}}_\fg$ have been described explicitly as being
  isomorphic to direct summands of $X_{n,m}$, and hence
  Proposition~\ref{prop_inj} classifies the injective objects in
  $\cC_\fg$.}  Moreover, Proposition~\ref{prop_inj} is an essential
improvement of Theorem 6.15 in~\cite{PS1} which claims the existence of a
certain finite filtration on any injective object in
$\widetilde{\mathrm{Tens}}_\fg$.


\section{The categories $\bT_\alpha$ and $\bar{\bT}_\alpha$}\label{se.ta}

We now introduce a series of ordered Grothendieck categories which we
study throughout the rest of {the} paper. The general setting is as
follows: $\alpha$ is an arbitrary infinite cardinal number, $V$ and $V_*$ are
$\alpha$-dimensional complex vector spaces, and
\begin{equation*}
  \mathbf{p} :V_*\otimes V\to \bK
\end{equation*}
is a nondegenerate pairing diagonalizable in the sense that there are bases
$\left\{v^*_\kappa\right\}$ of $V_*$ and $\left\{v_{\kappa'}\right\}$ of $V$ such that $\mathbf{p}\left(v_\kappa^*,v_{\kappa'}\right)=\delta_{\kappa\kappa'}$.

Fixing the bases $\left\{v^*_\kappa\right\}$ and $\left\{v_\kappa\right\}$, and an arbitrary total order on the set $\Sigma$ of indices $\kappa$, allows us to think of the elements of $V$ as size-$\alpha$ column vectors with
finitely many nonzero entries, and of the {elements} of $V^*$
as arbitrary size-$\alpha$ row vectors; of those, the elements of
$V_*$ are precisely the row vectors with finitely many nonzero
entries.  

{By $\alpha^+$ we denote the successor cardinal
  to $\alpha$.}  For each infinite cardinal $\beta\le \alpha^+$ we denote by
$V^*_\beta\subset V^*$ the subspace consisting of row vectors with
strictly fewer than $\beta$ nonzero entries.   In this way we have a transfinite filtration
\begin{equation}
0\subset V^*_{\aleph_0}\subset \ldots \subset V^*_\alpha \subset V^*\,.
\label{eq.transfilt}\end{equation}
Note that $V^*_{\alpha^+}=V^*$ and $V^*_{\aleph_0}=V_*$ by definition. 

Let $\fgl^M$ be the Mackey Lie algebra of the pairing $\mathbf{p}$.
Using the bases $\left\{v^*_\kappa\right\}$ and
$\left\{v_\kappa\right\}$, and the total order on $\Sigma$, we can
identify $\fgl^M(V_*,V)$ with $\alpha\times\alpha$-matrices with
finite rows and columns.  Every infinite cardinal $\beta\leq\alpha^+$
yields an ideal $\fgl^M_\beta$ in $\fgl^M$: it consists of matrices in
$\fgl^M$ with at most $\beta$ nonzero rows and columns, or
equivalently with strictly fewer than $\beta$ nonzero entries.  The
action of $\fgl^M$ on $V$ is nothing but multiplication of matrices.
On $V^*$ the action of $\fgl^M$ is given by the formula
$$g\cdot v=-vg\text{ for }g\in\fgl^M, v\in V^*\,.$$
Clearly $\fgl^M\cdot V_\beta^*\subseteq V_\beta^*$, i.e., the filtration (\ref{eq.transfilt}) of $V^*$ is $\fgl^M$-stable.  

Recall that, for a Young diagram $\mu$ and an object
$M \in \bT_{\alpha}$, we denote by $M_\mu$ the image of $M$ through
the Schur functor associated to $\mu$. Moreover, given Young diagrams
$\mu$ and $\nu$, we denote by $V_{\mu,\nu}$ the space of {\it traceless
  tensors} in $(V_*)_\mu\otimes V_\nu$, i.e. those annihilated by all
compositions
\begin{equation*}
  (V_*)_\mu\otimes V_\nu \subseteq (V_*)^{\otimes |\mu|}\otimes V^{\otimes |\nu|} \to (V_*)^{\otimes (|\mu|-1)}\otimes V^{\otimes (|\nu|-1)}
\end{equation*}
where the right-hand arrow ranges over the $|\mu|\cdot|\nu|$
possible applications of $\mathbf{p}$.

\begin{definition}\label{def.t}
$\bT_\alpha$ is the smallest
full monoidal {subcategory (with respect to $\otimes$) of ${\fg\text{-mod}}$} which contains $V$ and
$V^*$, and is closed under taking subquotients.
\label{def.bTcat}
\end{definition}

We will also work with the Grothendieck envelope
$\overline{\bT}_\alpha$ obtained as the full subcategory of
${\fgl^M\text{-mod}}$ {with objects arbitrary sums} of objects in
$\bT_\alpha$ (see \Cref{re.sigma}). We embark below on a study of
$\bT_\alpha$ and $\overline{\bT}_\alpha$, our first goal being to
show that the latter category fits into the framework of
\Cref{se.abs}.

\subsection{Simple objects}\label{subse.simple}

Our aim here is to prove the following classification of the simple
objects in the category $\bT_\alpha$.

\begin{proposition}\label{pr.simples}
  Let $t$ be a nonnegative integer such that there exist infinite
  cardinal numbers $\beta_t,\ldots,\beta_0$ with
  $\beta_0<\cdots<\beta_t\leq \alpha$. Then, given Young diagrams
  \begin{equation*}
    \lambda_t,\cdots,\lambda_0, \mu,\nu,
  \end{equation*}
the object\footnote{Since in the expression $V_{(\beta_t,\lambda_t),\cdots,(\beta_0,\lambda_0),\mu,\nu}=
   \bigotimes_{s=t}^0 (V^*_{\beta_s^+}/V^*_{\beta_s})_{\lambda_s}\otimes V_{\mu,\nu}$ the indices of the cardinal numbers $\beta_s$ decrease from left to right, in what follows we will often see tensor product or summation formulas with indices ranging from $t>0$ to $0$.}
\begin{equation}\label{eq:tensor}
   { V_{(\beta_t,\lambda_t),\cdots,(\beta_0,\lambda_0),\mu,\nu}:=}
   \bigotimes_{s=t}^0 (V^*_{\beta_s^+}/V^*_{\beta_s})_{\lambda_s}\otimes V_{\mu,\nu}
\end{equation}
is simple over $\fgl^M${, and its endomorphism algebra in $\bT_\alpha$
  is $\bK$.  Moreover,} the objects obtained for distinct choices of
cardinals or Young diagrams are mutually nonisomorphic.
\end{proposition}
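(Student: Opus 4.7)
The plan is to establish simplicity, triviality of the endomorphism algebra, and mutual non-isomorphism by a density-based analysis, handling each factor first and then the tensor product as a whole.

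First I would verify the claims one factor at a time. The module $V_{\mu,\nu}$ of traceless tensors is simple with endomorphism algebra $\bK$: for $\alpha=\aleph_0$ this is in \cite{PS,DPS,us}, and the same proof extends to general $\alpha$ by observing that the finitary Lie subalgebra $\fsl(\infty)=\ker\mathbf{p}\cap(V_*\otimes V)\subset\fgl^M$ acts densely on $V\oplus V_*$; hence by \cite[Lemma 7.3]{PS2} it acts densely on every $\fgl^M$-subquotient of $\mathrm{T}(V\oplus V_*)$, in particular on $V_{\mu,\nu}$, and classical Schur-Weyl theory over finite-rank approximations delivers both simplicity and scalar endomorphisms. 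For each layer factor $(V^*_{\beta_s^+}/V^*_{\beta_s})_{\lambda_s}$, the ideal $\fgl^M_{\beta_s}$ acts trivially, so the action descends to a quotient Mackey Lie algebra; an analogous density argument there, together with a Schur-functor computation, yields simplicity and $\End=\bK$ for each factor.

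The core step is showing that taking the tensor product of these simple factors preserves both simplicity and the scalar endomorphism property. My strategy is to construct a Lie subalgebra $\fk\subset\fgl^M$ such that: (a)~$\fk$ acts densely on $V\oplus V^*$, so that by \cite[Lemma 7.3]{PS2} the $\fk$-submodules and $\fk$-endomorphisms of \eqref{eq:tensor} agree with the corresponding $\fgl^M$-objects; and (b)~the restriction to $\fk$ of the module \eqref{eq:tensor} factors as an external tensor product of the simple $\fk$-modules of Step~1. Once (a) and (b) are in place, both simplicity and $\End=\bK$ transfer from the factors to the tensor product. A natural candidate for $\fk$ is assembled from block-Mackey-Lie-algebra pieces adapted to a decomposition of the index set $\Sigma$ whose pieces have cardinalities matching the successive layer sizes $\beta_s^+\setminus\beta_s$, together with a "generic" block housing the action on $V_{\mu,\nu}$. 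Such a $\fk$ is block-diagonal with respect to the filtration, which makes the factorization transparent.

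Finally, for non-isomorphism between modules with distinct parameter data, I would exhibit invariants of the module \eqref{eq:tensor} that recover its parameters: the cardinals $\beta_s$ are detected as the smallest $\gamma$ for which the ideal $\fgl^M_\gamma$ acts nontrivially on successive socle-filtration quotients of the module, and once the $\beta_s$ are fixed the Young diagrams $\lambda_s,\mu,\nu$ are recovered from the symmetric-group action on the relevant tensor powers via Schur-Weyl duality.

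The main obstacle is the middle step: constructing $\fk$ satisfying both density and external-factorization simultaneously. The subtlety is that $V^*$ does not canonically split as a direct sum of its successive filtration layers $V^*_{\beta_s^+}/V^*_{\beta_s}$ — such a splitting requires the fixed basis $\{v_\kappa^*\}$. The subalgebra $\fk$ must respect this basis-dependent decomposition to force the tensor product \eqref{eq:tensor} to factor as an external tensor product, while being large enough to remain dense in $\fgl^M$ on $V\oplus V^*$. Arranging both properties on a single $\fk$ is the delicate bookkeeping at the heart of the argument.
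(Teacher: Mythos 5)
The proposal handles the individual tensor factors essentially as the paper does (density arguments, Schur--Weyl), and the non-isomorphism sketch via detecting the $\beta_s$ from the action of the ideals $\fgl^M_\gamma$ is sound in outline. However, the core step --- simplicity of the full tensor product --- has a genuine gap, and the proposed route to fill it would fail.

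You propose constructing a subalgebra $\fk\subset\fgl^M$ that is block-diagonal with respect to a decomposition $\Sigma=\bigsqcup_s\Sigma_s$, so that the restriction of \Cref{eq:tensor} to $\fk$ becomes an external tensor product of simple $\fk$-modules. This cannot be made to work for two reasons. First, the layer $V^*_{\beta_s^+}/V^*_{\beta_s}$ is not supported on any piece of $\Sigma$: a class in it is represented by a row vector whose support is an arbitrary subset of $\Sigma$ of cardinality $\beta_s$, so no block-diagonal $\fk$ acts through a single block on this factor. Second, whenever $\mu$ or $\nu$ is nonempty, $V_{\mu,\nu}$ is a faithful $\fgl^M$-module (it sits inside a tensor power of the faithful modules $V_*$ and $V$), hence \emph{every} block of a block-diagonal $\fk$ acts nontrivially on it; the restriction to $\fk$ therefore does not factor externally across the blocks, which is precisely what your argument needs. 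You yourself flag this as ``the delicate bookkeeping at the heart of the argument,'' but it is not bookkeeping --- the construction cannot satisfy both requirements.

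The paper sidesteps this entirely via the abstract \Cref{le.simple_aux} (quoted from \cite{Chi14} and \cite{us}): if $\mathfrak{I}\subseteq\fG$ is an ideal and $W$ is a $\fG$-module on which $\mathfrak{I}$ acts densely, irreducibly, with $\End_{\mathfrak{I}}(W)=\bK$, then $\bullet\otimes W:\fG/\mathfrak{I}\textup{-mod}\to\fG\textup{-mod}$ is fully faithful and preserves simplicity. One then peels off the outermost tensor factor $(V^*_{\beta_t^+}/V^*_{\beta_t})_{\lambda_t}$ by induction on $t$, taking $\mathfrak{I}=\fgl^M_{\beta_t}$ and $W$ the inductively-known simple module with scalar endomorphisms (after checking, via \Cref{le.dense_gen} and \Cref{le.sw}, that $(V^*_{\beta_t^+}/V^*_{\beta_t})_{\lambda_t}$ is a simple $\fgl^M/\fgl^M_{\beta_t}$-module with scalar endomorphisms). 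This requires no basis-dependent splitting and no large ``dense'' subalgebra. If you want to repair your proposal, the missing ingredient is exactly this ideal-module lemma; with it, the rest of your sketch goes through.
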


We work in stages towards a proof. First, we have

\begin{lemma}
  The object $V_{\mu,\nu}$ has no nonzero proper subobjects and its
  endomorphism algebra over $\fgl^M$ is $\bK$.  
\end{lemma}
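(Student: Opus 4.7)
The plan is a two-step reduction to classical finite-dimensional Schur--Weyl theory. First I would replace $\fgl^M$ by its finitary Lie subalgebra $V_*\otimes V\subset\fgl^M$: a direct matrix construction shows that $V_*\otimes V$ acts densely on each of $V$ and $V^*$ (given $g\in\fgl^M$ and finitely many vectors, build a finite-rank operator matching $g$ on them), so by the density principle \cite[Lemma~7.3]{PS2} it also acts densely on every $\fgl^M$-subquotient of $\mathrm{T}(V\oplus V_*)$, and in particular on $V_{\mu,\nu}$. Consequently $\fgl^M$-submodules and $\fgl^M$-endomorphisms of $V_{\mu,\nu}$ coincide with their $V_*\otimes V$-counterparts.

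Next I would write $V_{\mu,\nu}$ as a directed union
\begin{equation*}
V_{\mu,\nu}=\bigcup_{F}V_{\mu,\nu}^{(F)},
\end{equation*}
indexed by finite subsets $F\subset\Sigma$; here $V^{(F)}$ and $V_*^{(F)}$ denote the spans of the basis vectors labelled by $F$, and $V_{\mu,\nu}^{(F)}$ is the space of traceless tensors in $(V_*^{(F)})_\mu\otimes V^{(F)}_\nu$. Because $\mathbf{p}(v^*_\kappa,v_{\kappa'})=\delta_{\kappa\kappa'}$, every contraction on $(V_*)_\mu\otimes V_\nu$ restricts to a contraction on the small space, so ``traceless'' has the same meaning in both settings; and any tensor has finite support, which makes the union exhaustive. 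For $|F|$ large enough, classical finite-dimensional Schur--Weyl theory identifies $V_{\mu,\nu}^{(F)}$ as a simple module over $\fgl(V^{(F)})\cong V_*^{(F)}\otimes V^{(F)}\subset V_*\otimes V$ with one-dimensional endomorphism ring.

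Simplicity of $V_{\mu,\nu}$ follows: given a nonzero submodule $W$ and $0\ne x\in W$, choose a large $F$ containing the support of $x$ so that $V_{\mu,\nu}^{(F)}$ is simple; then the $V_*^{(F)}\otimes V^{(F)}$-orbit of $x$ fills $V_{\mu,\nu}^{(F)}$, and an arbitrary $y\in V_{\mu,\nu}$ is captured by enlarging $F$ further. For the endomorphism claim, a $\fgl^M$-endomorphism $\phi$ commutes with each rank-one diagonal $v^*_\kappa\otimes v_\kappa$, so it preserves joint weight spaces; since every weight vector in $V_{\mu,\nu}$ has finite support, $\phi$ restricts to each $V_{\mu,\nu}^{(F)}$, and classical Schur's lemma forces that restriction to be a scalar $c_F$. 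The scalars $c_F$ and $c_{F'}$ must agree on the common submodule $V_{\mu,\nu}^{(F)}\subset V_{\mu,\nu}^{(F')}$, so $\phi$ acts as a single scalar on all of $V_{\mu,\nu}$.

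I expect the only subtle point to be the coherence of tracelessness along the finite-dimensional filtration $\{V_{\mu,\nu}^{(F)}\}$; this is precisely where the diagonality of $\mathbf{p}$ does its work. Once that is in hand, the argument collapses to classical Schur--Weyl at each finite level, knitted together by Mackey density to pass from $V_*\otimes V$ back to $\fgl^M$.
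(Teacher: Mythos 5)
Your overall strategy --- reduce from $\fgl^M$ to $V_*\otimes V$ via density, then exhaust $V_{\mu,\nu}$ by its finite-dimensional Schur--Weyl pieces $V_{\mu,\nu}^{(F)}$ --- is sound, and is more self-contained than the paper's, which simply cites \cite[Theorem 2.2]{PS} together with \cite[Theorem 5.5]{PS2} for simplicity and then reduces to the countable-dimensional case (using the chain $\fgl(2)\subset\fgl(3)\subset\cdots$) for the endomorphism statement. Your simplicity argument is correct.

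The endomorphism argument, however, has a genuine gap. You infer that $\phi$ restricts to each $V_{\mu,\nu}^{(F)}$ from the two facts that (i) $\phi$ preserves joint weight spaces for the diagonals $v_\kappa^*\otimes v_\kappa$ and (ii) weight vectors of $V_{\mu,\nu}$ have finite support. This inference is invalid: the joint weight spaces of $V_{\mu,\nu}$ need not be finite-dimensional, so preserving a weight space does not constrain the support of the image. Already for $\mu=\nu=(1)$, the zero-weight space of $V_{(1),(1)}$ is spanned by all $v_\kappa^*\otimes v_\kappa - v_\lambda^*\otimes v_\lambda$ and is infinite-dimensional, and a weight-preserving linear map could a priori send $v_1^*\otimes v_1 - v_2^*\otimes v_2$ (supported on $\{1,2\}$) to $v_3^*\otimes v_3-v_4^*\otimes v_4$ (supported on $\{3,4\}$), escaping $V_{\mu,\nu}^{(\{1,2\})}$. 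The fix is exactly the move the paper makes in its $\fgl(q)$-version: given $0\ne x\in V_{\mu,\nu}^{(F)}$, the vector $\phi(x)$ has finite support and hence lies in $V_{\mu,\nu}^{(F')}$ for some finite $F'\supseteq F$; since both $x$ and $\phi(x)$ generate the simple $V_*^{(F')}\otimes V^{(F')}$-module $V_{\mu,\nu}^{(F')}$, one gets $\phi\big(V_{\mu,\nu}^{(F')}\big)=V_{\mu,\nu}^{(F')}$, and only then does Schur's lemma give a scalar. A secondary slip: $V_*\otimes V$ does not act densely on $V^*$ (take $g=\mathrm{id}$ and the all-ones row vector $\phi\in V^*$; no finite-rank $k$ satisfies $\phi k=\phi$). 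What you actually need, and what your construction does prove, is density on $V\oplus V_*$, which holds because elements of $V$ and $V_*$ have finite support.
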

\begin{proof}
  As a consequence of \cite[Theorem 2.2]{PS} and \cite[Theorem
  5.5]{PS2}, $V_{\mu,\nu}$ is simple over the ideal
  $\fgl^M_{\aleph_0}=V_*\otimes V\subset \fgl^M$: the former result handles the case of
  countable-dimensional $V$ and $V_*$, whereas the latter result transports
  this to the general case via a categorical equivalence. In
  conclusion, $V_{\mu,\nu}$ is also simple over $\fgl^M$.

  As for the statement regarding the endomorphism algebra, we can
  again assume that we are in the countable-dimensional setup of
  \cite{PS}, as the general case follows then by \cite[Theorem
  5.5]{PS2}.  Then the Lie algebra $V_*\otimes V$ is the union of a chain of
  upper-left-hand-corner inclusions
  \begin{equation*}
    \fgl(2)\subset \fgl(3)\subset \cdots\subset \fgl(q)\subset\cdots,
  \end{equation*}
  and $V_{\mu,\nu}$ is a direct limit of irreducible
  $\fgl(q)$-modules $\left(V_{\mu,\nu}\right)_q$.
  
  Now let $\psi$ be an $\fgl^M$-endomorphism of $V_{\mu,\nu}$.  Consider a vector $0\neq v\in V_{\mu,\nu}$.  Then, $v\in \left(V_{\mu,\nu}\right)_q$ for some $q$.  The vector $\psi(v)$ lies in $\left(V_{\mu,\nu}\right)_{q'}$ for some $q'>q$, hence $\psi(v)$ generates $\left(V_{\mu,\nu}\right)_{q'}$ over $U\big(\fgl(q')\big)$.  Consequently, $\psi|_{\left(V_{\mu,\nu}\right)_{q'}}$ is a well-defined automorphism of $\left(V_{\mu,\nu}\right)_{q'}$, and equals a constant by Schur's Lemma.  The fact that the constants obtained in this way for all $q''>q'$ coincide is obvious.  The statement follows.
\end{proof}

The following result is a direct corollary of \cite[Lemma 3]{Chi14} and \cite[Lemma 3.1]{us}.

\begin{lemma}\label{le.simple_aux}
Let $\fG$ be a Lie algebra and $\mathfrak{I} \subseteq \fG$ {be} an ideal. If $W$ is a $\fG$-module on which $\mathfrak{I} $ acts densely and irreducibly with $\End_\mathfrak{I} (W)=\bK$, then the functor 
\begin{equation*}
  \bullet\otimes W: {\fG/\mathfrak{I}\textup{-mod}}\to {\fG\textup{-mod}}
\end{equation*}
is fully faithful and preserves simplicity. 
  \qedhere
\end{lemma}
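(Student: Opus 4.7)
The plan is to prove the two conclusions --- full faithfulness and preservation of simplicity --- separately, working directly from the density and Schur-type hypotheses on $W$.

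For faithfulness, a morphism $f:M\to M'$ in $\fG/\mathfrak{I}\text{-mod}$ satisfying $f\otimes \id_W=0$ must vanish identically, since $W\ne 0$. Fullness is more delicate. Given a $\fG$-linear map $\varphi:M\otimes W\to M'\otimes W$, I observe that, because $\mathfrak{I}$ acts trivially on both $M$ and $M'$, the action of $\mathfrak{I}$ on each tensor product factors solely through $W$; thus $\varphi$ is in particular $\mathfrak{I}$-linear. After choosing bases $\{m_i\}$, $\{m'_j\}$ of $M$ and $M'$, the projections of $\varphi$ restricted to $m_i\otimes W$ onto each summand $m'_j\otimes W$ define $\mathfrak{I}$-endomorphisms of $W$. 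The hypothesis $\End_\mathfrak{I}(W)=\bK$ produces scalars $c_{ij}\in\bK$; the finiteness of each element in the direct sum $M'\otimes W=\bigoplus_j m'_j\otimes W$ forces $c_{ij}=0$ for all but finitely many $j$ at each fixed $i$. This data assembles into a well-defined $\bK$-linear map $f:M\to M'$ with $\varphi=f\otimes \id_W$, and the $\fG$-equivariance of $\varphi$ yields $\fG/\mathfrak{I}$-linearity of $f$ by evaluating on a pure tensor $m\otimes w$ with $w\ne 0$.

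For preservation of simplicity, let $M$ be simple over $\fG/\mathfrak{I}$ and let $N\subseteq M\otimes W$ be a nonzero $\fG$-submodule. Choose $0\ne x\in N$ of minimal length $k$ in an expansion $x=\sum_{s=1}^{k} m_s\otimes w_s$; minimality forces both families $\{m_s\}$ and $\{w_s\}$ to be $\bK$-linearly independent. The Jacobson density theorem, applied to the simple $\mathfrak{I}$-module $W$ with $\End_\mathfrak{I}(W)=\bK$, provides an element $y\in U(\mathfrak{I})$ with $y\cdot w_1=w_1$ and $y\cdot w_s=0$ for $s\ge 2$, so that $y\cdot x=m_1\otimes w_1\in N$. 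The $\mathfrak{I}$-irreducibility of $W$ then yields $m_1\otimes W\subseteq N$. Finally, to propagate from $m_1$ to all of $M$, I invoke the density hypothesis: for $g\in\fG$ and $w\in W$, pick $k\in\mathfrak{I}$ with $k\cdot w=g\cdot w$; then
\begin{equation*}
g\cdot(m_1\otimes w)-k\cdot(m_1\otimes w)=(g\cdot m_1)\otimes w\in N,
\end{equation*}
showing that $(\fG\cdot m_1)\otimes W\subseteq N$. Iterating and using the simplicity of $M$ as an $\fG/\mathfrak{I}$-module, we conclude $N=M\otimes W$.

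The main obstacle I anticipate is the fullness argument: verifying carefully that the Schur-type scalars $c_{ij}$ arising from $\End_\mathfrak{I}(W)=\bK$ genuinely assemble into a well-defined linear map $f:M\to M'$ --- in particular, that only finitely many $c_{ij}$ are nonzero for each fixed $i$ --- and that the $\fG$-equivariance of $\varphi$ transfers cleanly to $f$, requires some bookkeeping for arbitrary, possibly infinite-dimensional, modules $M$ and $M'$.
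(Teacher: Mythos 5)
Your argument is correct, and it is worth noting that it is genuinely self-contained where the paper is not: the paper disposes of this lemma in one line, citing Lemma~3 of \cite{Chi14} and Lemma~3.1 of \cite{us}, so there is no in-text proof to match. Your route combines two standard ingredients: for full faithfulness, the observation that $\mathfrak{I}$ acts on $M\otimes W$ as $\id_M\otimes(\text{action on }W)$, so that a $\fG$-map decomposes into a matrix of $\mathfrak{I}$-endomorphisms of $W$, each a scalar by the hypothesis $\End_\mathfrak{I}(W)=\bK$; your handling of the finiteness of the scalars $c_{ij}$ (evaluate at a single nonzero $w$, since each component map is a scalar multiple of the identity) and of the transfer of $\fG$-equivariance to $f$ is exactly the right bookkeeping, and works for modules of arbitrary dimension. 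For preservation of simplicity, the minimal-length tensor trick plus the Jacobson density theorem for the simple $U(\mathfrak{I})$-module $W$ gives $m_1\otimes W\subseteq N$, and the propagation step is fine. One small remark: the density hypothesis of the lemma is not actually needed where you invoke it, since
\begin{equation*}
(g\cdot m_1)\otimes w \;=\; g\cdot(m_1\otimes w)-m_1\otimes(g\cdot w)
\end{equation*}
already lies in $N$ because $m_1\otimes(g\cdot w)\in m_1\otimes W\subseteq N$; your use of density is valid but superfluous, and indeed your proof shows that irreducibility over $\mathfrak{I}$ together with $\End_\mathfrak{I}(W)=\bK$ suffices for the conclusion. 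The concluding ``iterating'' step is terse but correct; it amounts to observing that $\{m\in M: m\otimes W\subseteq N\}$ is a nonzero $\fG$-stable subspace of the simple module $M$.
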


We will also need the following auxiliary Schur-Weyl-type result.

\begin{proposition}\label{le.sw}
  Let $W$ be a vector space and $\fG\subseteq \End(W)$ {be} a Lie
  subalgebra { which acts} densely on $W$. Then, for any partition
  $\lambda$, the $\fG$-module $W_\lambda$ is simple and
  $\End_\fG(W_\lambda)\cong \bK$.
\end{proposition}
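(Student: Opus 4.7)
The plan is to bootstrap from classical (finite-dimensional) Schur-Weyl duality using the density hypothesis in two ways: first to transfer simplicity, and second to transfer Schur's lemma. The first preparatory step is to upgrade the density of $\fG$ on $W$ to density of $\fG$ on $W_\lambda$. Since $\End(W)$ is a Lie algebra under the commutator, $\fG\subseteq\End(W)$ acts densely on $W$ as an $\End(W)$-submodule, and $W_\lambda$ is an $\End(W)$-subquotient of $W^{\otimes|\lambda|}\subseteq \mathrm{T}(W)$. The lemma cited in the preliminaries from \cite{PS2} therefore gives density of $\fG$ on $W_\lambda$: for any finite list of vectors in $W_\lambda$ and any $g\in\End(W)$, some $k\in\fG$ acts as $g$ does on those vectors.

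For simplicity of $W_\lambda$, pick a nonzero $v\in W_\lambda$ and an arbitrary $w\in W_\lambda$; both lie inside $(W_0)_\lambda$ for some finite-dimensional subspace $W_0\subseteq W$. Classical Schur-Weyl duality for $\fgl(W_0)$ says $(W_0)_\lambda$ is either zero or a simple $\fgl(W_0)$-module, and it is nonzero since it contains $v$. Hence $w=\sum_j g_{j,r_j}\cdots g_{j,1}\cdot v$ for elements $g_{j,s}\in\fgl(W_0)\subseteq\End(W)$. For each summand, I would recursively use density: produce $k_1\in\fG$ agreeing with $g_{j,1}$ on $\{v\}$; let $v_1:=k_1\cdot v=g_{j,1}\cdot v$; then produce $k_2\in\fG$ agreeing with $g_{j,2}$ on $\{v_1\}$; and so on. This yields $k_{r_j}\cdots k_1\cdot v=g_{j,r_j}\cdots g_{j,1}\cdot v$, whence $w\in U(\fG)\cdot v$ and $W_\lambda$ is simple over $\fG$.

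For the endomorphism statement, let $\varphi\in\End_\fG(W_\lambda)$ and $0\ne v\in W_\lambda$. Choose a finite-dimensional $W_0\subseteq W$ with $v,\varphi(v)\in(W_0)_\lambda$. I claim $\varphi$ restricts to a $\fgl(W_0)$-linear endomorphism of $(W_0)_\lambda$. By induction along expressions $u=g_r\cdots g_1\cdot v$ with $g_s\in\fgl(W_0)$ and $g\in\fgl(W_0)$: given that $u,\varphi(u)\in(W_0)_\lambda$, density applied to the finite set $\{u,\varphi(u)\}\subset W_\lambda$ produces $k\in\fG$ acting as $g$ on both, so
\begin{equation*}
  \varphi(g\cdot u)=\varphi(k\cdot u)=k\cdot\varphi(u)=g\cdot\varphi(u)\in (W_0)_\lambda.
\end{equation*}
Since $(W_0)_\lambda=U(\fgl(W_0))\cdot v$ by the finite-dimensional Schur-Weyl result, this establishes both that $\varphi$ preserves $(W_0)_\lambda$ and that the restriction is $\fgl(W_0)$-equivariant. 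Schur's lemma then gives $\varphi|_{(W_0)_\lambda}=c_{W_0}\cdot\id$ for some $c_{W_0}\in\bK$. Finally, $c_{W_0}$ is independent of $W_0$: for any two admissible choices $W_0,W_0'$, any larger $W_0''\supseteq W_0\cup W_0'$ forces $c_{W_0}=c_{W_0''}=c_{W_0'}$ by restriction. Since every vector of $W_\lambda$ lies in some such $(W_0)_\lambda$, we conclude $\varphi$ is a global scalar.

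The main technical nuisance I anticipate is the bookkeeping needed to iterate density along monomials in $U(\fgl(W_0))$, and making sure that the ``finite sets'' used at each step include all vectors whose image under $\varphi$ or under the partial product is subsequently touched; once this inductive pattern is set up cleanly, everything reduces to classical Schur-Weyl and Schur's lemma inside finite-dimensional subspaces.
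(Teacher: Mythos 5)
Your proof is correct. It follows the same overall strategy as the paper's — reduce to finite-dimensional subspaces $W_0\subseteq W$, invoke classical Schur--Weyl duality and Schur's lemma there, and verify the scalar is independent of the choice of $W_0$ — but the details differ in one genuinely interesting way. The paper first reduces wholesale to the case $\fG=\End(W)$ by citing \cite[Lemma 7.3, Theorem 7.4]{PS2}, which says simplicity and endomorphism rings are preserved under restriction along a densely acting Lie subalgebra; once $\fG = \End(W)$, both the simplicity (via direct limit of the $F_\lambda$) and the endomorphism claim (via the observation that $F_\lambda$ appears with multiplicity one in $W_\lambda\big|_{\End(F)}$, so any $\End(F)$-equivariant endomorphism must preserve it) become immediate. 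You instead keep $\fG$ throughout and re-derive the needed transfer by iterating density along monomials in $U(\fgl(W_0))$, which is precisely the content of the PS2 reduction unpacked by hand. For the endomorphism ring, where the paper invokes multiplicity one of $F_\lambda$, you prove directly by induction that $\varphi$ restricts to a $\fgl(W_0)$-equivariant endomorphism of $(W_0)_\lambda$, applying density to the two-element set $\{u,\varphi(u)\}$ at each step. The trade-off: your argument is more self-contained and explicit, at the cost of length; the paper's is shorter but leans on the cited reduction result. Both are sound.
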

\begin{proof}  As we noted in Section~\ref{se.prel}, $\mathfrak{G}$ acts densely of $W_\lambda$.  Moreover, it suffices to prove the statement for the case $\mathfrak{G}=\End(W)$, as both simplicity and the endomorphism ring are
  preserved by passing to a Lie subalgebra acting densely (see
  \cite[Lemma 7.3, Theorem 7.4]{PS2}).
The simplicity of $W_\lambda$ as $\End(W)$-module is obvious as $W_\lambda$ is the direct limit of all subspaces $F_\lambda$ for finite-dimensional subspaces $F\subset W$, and the latter are simple $\End(F)$-modules.

Let now $\psi:W_\lambda\to W_\lambda$ be an automorphism.  Choose a
decomposition $W=F\oplus \bar{F}$ where $\dim F<\infty$.  Note that,
for large enough $\dim F$, the $\End(F)$-module $F_\lambda$ is a
submodule of $W_\lambda\Big|_{\End(F)}$ of multiplicity $1$.  Hence,
$\psi\big|_{\End(F)}$ is well defined and $\psi\big|_{F_\lambda}=c$
for some $c\in \bK$.  The fact that $c$ does not depend on the choice
of the decomposition $F\oplus \bar{F}$ follows from the fact that, for
any two decompositions $W=F'\oplus \bar{F}'=F''\oplus \bar{F}''$,
there is a decomposition $W=F'''\oplus \bar{F}'''$ with
$F',F''\subset F'''$.
\end{proof}

The next two results highlight the relevance of \Cref{le.sw} to our setup.

\begin{lemma}\label{le.dense}
  The Lie algebra $\fgl^M/\fgl^M_\alpha$ acts densely on the quotient $V^*/V^*_\alpha$.  
\end{lemma}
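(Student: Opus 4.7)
My plan is as follows. To verify density I must show that, given any finite set $\bar v_1^*,\ldots,\bar v_q^* \in V^*/V^*_\alpha$ and any $\phi \in \End(V^*/V^*_\alpha)$, there is $k \in \fgl^M$ with $k\cdot v_s^* \equiv w_s^* \pmod{V^*_\alpha}$ for all $s$, where $v_s^*, w_s^* \in V^*$ are chosen representatives of $\bar v_s^*$ and $\phi(\bar v_s^*)$. Using the formula $g\cdot v^* = -v^* g$ recalled earlier, this translates into finding $k$ such that each row vector $v_s^* k + w_s^*$ has support of size $< \alpha$. I will assume without loss of generality that the $\bar v_s^*$ are linearly independent, equivalently that the $v_s^*$ are linearly independent modulo $V^*_\alpha$.

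The linear-algebraic engine of the proof is the following observation: for any $\Sigma' \subseteq \Sigma$ with $|\Sigma \setminus \Sigma'| < \alpha$, the restrictions $v_s^*|_{\Sigma'}$ remain linearly independent, since any nontrivial relation among them would yield a nontrivial combination of the $v_s^*$'s supported on $\Sigma\setminus\Sigma'$ and hence lying in $V^*_\alpha$. Consequently, some $q$ columns of the $q \times \Sigma'$ matrix $\bigl(v_s^*(i)\bigr)_{s,i}$ form an invertible $q \times q$ block. A transfinite induction of length $\alpha$---which goes through because $|\mu|\cdot q < \alpha$ for every $\mu < \alpha$---then produces pairwise disjoint $q$-tuples $T_\mu = \{i_1^{(\mu)},\ldots,i_q^{(\mu)}\}\subset \Sigma$, $\mu < \alpha$, each giving an invertible matrix $M_\mu = \bigl(v_s^*(i_t^{(\mu)})\bigr)_{s,t}$.

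With these blocks in hand I assemble $k$ column by column. Let $J = \bigcup_s \mathrm{supp}(w_s^*)$, which has cardinality at most $\alpha$, and partition $J$ into blocks $\{J_\mu\}$ of size at most $q$, indexed by an initial segment of $\{\mu < \alpha\}$. For $j \in J_\mu$ I let the column $k(-,j)$ be supported on $T_\mu$ with entries determined by the invertible system $M_\mu\,\bigl(k(i_t^{(\mu)},j)\bigr)_t = \bigl(-w_s^*(j)\bigr)_s$; for $j \notin J$ I set $k(-,j) = 0$. By the disjointness of the $T_\mu$ and the finiteness of each $J_\mu$, every row of $k$ is supported in a single $J_\mu$ and every column in a single $T_\mu$, so $k \in \fgl^M$. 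A direct calculation then gives $v_s^* k = -w_s^*$ exactly, which is stronger than the required congruence.

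I expect the main obstacle to be the bookkeeping in the middle step, where one must simultaneously maintain disjointness of the $T_\mu$'s (so that rows of $k$ remain finite) and invertibility of the local $q\times q$ matrices (so that the defining systems are solvable). Both are controlled by the single cardinal inequality $|\mu|\cdot q < \alpha$ for $\mu < \alpha$, which holds regardless of whether $\alpha$ is a limit or a successor cardinal; once this is in place the remainder is a transparent column-by-column assembly.
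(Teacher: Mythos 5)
Your proposal is correct and follows essentially the same strategy as the paper's proof: both arguments use a transfinite induction to carve out pairwise disjoint finite subsets of $\Sigma$ on which the representatives $\tilde v_s$ restrict to linearly independent finite vectors (justified by the observation that removing a set of indices of cardinality $<\alpha$ cannot introduce a dependence modulo $V^*_\alpha$), and then assemble $g$ column by column by solving the resulting finite linear systems, with disjointness guaranteeing finiteness of the rows. The only cosmetic differences are that the paper literally partitions $\Sigma$ into finite blocks indexed by $\Sigma$ itself, one per column, while you extract disjoint $q$-element blocks and group the columns of $J$ by hand; the cardinality estimate $q\cdot|\mu|<\alpha$ that drives the induction is the same in both.
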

\begin{proof}
Since
\begin{equation*}
  \fgl^M_\alpha\cdot V^*\subseteq V^*_\alpha,
\end{equation*}
the Lie algebra ${\fgl^M/\fgl_\alpha^M}$ does indeed act on the
quotient $V^*/V^*_\alpha$. We will henceforth focus on showing that $\fgl^M$ acts
densely. For this purpose, let $v_s$ for $0\le s\le q$ be linearly
independent vectors in $V^*/V^*_\alpha$, and $w_s\in V^*/V_\alpha^*$ be $q$ other
vectors. We have to show that there exists $g\in\fgl^M$ such that
\begin{equation}\label{eq:le.dense}
  g\cdot{\tilde{v}_s=\tilde{w}_s}\text{ for any }s{,}
\end{equation}
{where tilde indicates a preimage in $V^*$}. 

We think of $\tilde{v}_s$ and $\tilde{w}_s$ as row vectors.  The
coordinates of row vectors in $V^*$ are indexed by a totally ordered set $\Sigma$ of
cardinality $\alpha$.  For the duration of the proof, we identify $\Sigma$ with the well-ordered set of all
ordinals $b$ such that $b<\alpha$. The matrices in $\fgl^M$
act on row vectors in $V^*$ as $-vg$, where $vg$ is the product of
matrices. The linear independence of the vectors
$v_s\in V^*/V^*_\alpha$ ensures that their representatives
$\tilde{v}_s$ in $V^*$ have {at least} $\alpha$ nonzero entries.  This
implies that we can partition the set $\Sigma$ of indices of
cardinality $\alpha$ into finite sets $\Sigma_b$ parametrized by
all ordinals $b<\alpha$, and such that each finite set
$\left\{\tilde{v}_s|_{\Sigma_b}\right\}$ is a linearly independent
set of finite vectors; here $\tilde{v}_s|_{\Sigma_b}$ denotes the
finite vector formed by making all entries of $\tilde{v}_s$ outside of
$\Sigma_b$ equal to zero.  

The latter claim is proved by transfinite
induction.  We start by finding $\Sigma_0$ corresponding to the
ordinal $0$: a set $\Sigma_0$ exists such that
$\left\{\tilde{v}_s|_{\Sigma_0}\right\}$ is a linearly independent
set, otherwise the images $v_s$ of $\tilde{v}_s$ in $V^*/V_\alpha^*$
will be linearly dependent.  The transfinite induction step is carried
out in the same way: Let
$\Sigma':=\Sigma\setminus\bigsqcup_{b'<b}\,\Sigma_{b'}$
for some ordinal $b<\alpha$.  If the vectors
$\left\{\tilde{v}_s|_{\Sigma'_b}\right\}$ are linearly dependent
for all choices of a finite set $\Sigma'_b\subseteq \Sigma'$, then
their images in $V^*/V_\alpha^*$ are linearly dependent, a
contradiction.

 The linear independence of the
vectors $\tilde{v}_s|_{\Sigma_b}$ means that we can select column vectors
$g_b$ with finitely many nonzero entries indexed by $\Sigma_b$, and
such that for the product of matrices $-(\tilde{v}_s|_{\Sigma_b}) g_b$ we have
\begin{equation*}
  -(\tilde{v}_s|_{\Sigma_b}) g_b = \text{ the }b\text{-indexed entry of }\tilde{w}_s\text{ for } 0\le s\le q{.}
\end{equation*}
Now simply
take $g$ to be the matrix having the $g_b$ as its columns. It has
finite rows and columns by construction, and satisfies the desired
condition \Cref{eq:le.dense}.
\end{proof}

We can generalize \Cref{le.dense} as follows.

\begin{lemma}\label{le.dense_gen}
  For every infinite cardinal number $\beta\le \alpha$ the quotient
  $\fgl^M_{\beta^+}/\fgl^M_\beta$ acts densely on
  $V^*_{\beta^+}/V^*_\beta$.
\end{lemma}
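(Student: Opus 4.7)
My plan is to adapt the proof of \cref{le.dense} nearly verbatim, replacing $\alpha$ throughout by $\beta$. First, a straightforward matrix-support calculation shows that $\fgl^M_\beta\cdot V^*_{\beta^+}\subseteq V^*_\beta$, so the quotient Lie algebra $\fgl^M_{\beta^+}/\fgl^M_\beta$ indeed acts on $V^*_{\beta^+}/V^*_\beta$: if $v\in V^*_{\beta^+}$ has at most $\beta$ nonzero entries and $g\in\fgl^M_\beta$ has strictly fewer than $\beta$ nonzero entries, then the row vector $vg$ is supported inside the set of columns of $g$ carrying a nonzero entry, a set of cardinality strictly less than $\beta$.

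For the density statement itself, I fix linearly independent $v_0,\ldots,v_q\in V^*_{\beta^+}/V^*_\beta$, arbitrary $w_0,\ldots,w_q$ in the same quotient, and representatives $\tilde v_s,\tilde w_s\in V^*_{\beta^+}$. Let $B\subseteq\Sigma$ be the union of the supports of the $\tilde w_s$. If $|B|<\beta$ then the $w_s$ already vanish in the quotient and $g=0$ works, so I assume $|B|=\beta$ and enumerate $B=\{b_\tau\}_{\tau<\beta}$. The core of the argument is to construct, by transfinite induction on $\tau<\beta$, pairwise disjoint finite subsets $\Sigma_{b_\tau}\subseteq\Sigma$ such that $\{\tilde v_s|_{\Sigma_{b_\tau}}\}_{s=0}^{q}$ is linearly independent for each $\tau$. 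At stage $\tau$, the previously chosen $\Sigma_{b_{\tau'}}$ for $\tau'<\tau$ union to a set $\Sigma\setminus\Sigma'$ of cardinality at most $|\tau|\cdot\aleph_0<\beta$; if no finite subset of $\Sigma'$ sufficed, then $\{\tilde v_s|_{\Sigma'}\}_{s=0}^{q}$ would itself be linearly dependent, yielding a nontrivial combination $\sum c_s\tilde v_s$ supported in $\Sigma\setminus\Sigma'$---an element of $V^*_\beta$---contradicting the independence of the $v_s$ in $V^*_{\beta^+}/V^*_\beta$.

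Once the $\Sigma_b$ are in place, for each $b\in B$ I pick a column vector $g_b$ supported on $\Sigma_b$ solving the finite system $-(\tilde v_s|_{\Sigma_b})\,g_b=(\tilde w_s)_b$ for $s=0,\ldots,q$, which is solvable because the rows on the left-hand side are linearly independent. I then let $g$ be the matrix whose $b$th column is $g_b$ for $b\in B$ and zero elsewhere. Disjointness of the $\Sigma_b$ forces $g$ to have finite rows and columns; its total number of nonzero entries is at most $\beta\cdot\aleph_0=\beta$, so $g\in\fgl^M_{\beta^+}$, and by construction $-\tilde v_s g$ agrees with $\tilde w_s$ on $B$ and vanishes off $B$, hence equals $\tilde w_s$ exactly. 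The one genuinely delicate point, which I expect to be the main obstacle to a clean writeup, is the cardinal arithmetic in the transfinite induction: it works precisely because $\beta$ is an infinite cardinal, so that a union of $<\beta$-many finite sets always has cardinality $<\beta$; the rest transplants directly from \cref{le.dense}.
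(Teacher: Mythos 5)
Your proof is correct, but it takes a genuinely different route from the paper. The paper's proof of this lemma is a short reduction: it lifts the $v_s,w_s$ to row vectors with exactly $\beta$ nonzero entries, lets $\Sigma'$ be the union of their supports (a set of cardinality $\beta$), restricts attention to vectors and matrices supported on $\Sigma'$, and observes that one is then working with the Mackey Lie algebra of a $\beta$-dimensional pairing $(V_{\Sigma'})_*\times V_{\Sigma'}\to\bK$; \Cref{le.dense} applied to that smaller pairing finishes the job in two sentences. You instead re-prove the density directly, transplanting the transfinite-induction partition argument from the proof of \Cref{le.dense} into the relative setting. Your adaptation is carried out correctly, and it is not a purely mechanical substitution of $\beta$ for $\alpha$: you rightly construct the disjoint finite sets $\Sigma_b$ only for indices $b$ in the support set $B$ of the targets (rather than partitioning all of $\Sigma$ as in \Cref{le.dense}), which is exactly what is needed to keep the resulting matrix $g$ inside $\fgl^M_{\beta^+}$ when $\beta<\alpha$. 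The paper's reduction is shorter and avoids repeating the induction; your direct argument is more self-contained and makes explicit why the $\beta=\alpha$ proof generalizes.

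One small correction to your cardinal arithmetic: the intermediate bound ``$|\tau|\cdot\aleph_0<\beta$'' is not literally true when $\tau$ is finite and $\beta=\aleph_0$ (there $|\tau|\cdot\aleph_0=\aleph_0=\beta$). What is true, and what you correctly invoke in your closing remark, is simply that a union of fewer than $\beta$ finite sets has cardinality strictly less than $\beta$ whenever $\beta$ is an infinite cardinal: if $|\tau|$ is finite the union is finite, and if $|\tau|$ is infinite the union has cardinality at most $|\tau|\cdot\aleph_0=|\tau|<\beta$. You should replace the inline bound with this case split; the rest of the argument stands.
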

\begin{proof}
  The case $\beta=\alpha$ is treated in Lemma \ref{le.dense}.  Assume $\beta<\alpha$. We have to show that for any choice of finitely many linearly independent vectors
  $v_s\in V^*_{\beta^+}/V^*_\beta$ and any choice of
  $w_s$ in the same vector space, there exists $g\in \fgl^M$ such
  that
  \begin{equation*}
    g\cdot v_s = w_s\text{ for all }s.
  \end{equation*}
  We can lift $v_s$ and $w_s$ to row vectors in $V^*$ with
  $\beta$ nonzero entries. Having done so, denote by $\Sigma'$ the union of
  the sets of indices of nonzero entries of all these lifted vectors. We can now restrict our
  attention to only those vectors in $V$ and $V^*$ and matrices in
  $\fgl^M$ whose nonzero coordinates have indices in $\Sigma'$.

  This is equivalent to working with the pairing between the
  $\beta$-dimensional subspace $V_{\Sigma'}\subseteq V$ spanned by $\Sigma'$-entry
  vectors and the subspace $(V_{\Sigma'})_*\subseteq V_*$, and
  with the corresponding Mackey Lie algebra. To complete the proof, we simply apply
  \Cref{le.dense} to this pairing of lower-dimensional vector spaces.
\end{proof}

\begin{lemma}\label{le.lambdas}
  For any cardinal number $\beta\le \alpha$ and any partition
  $\lambda$, the module $(V^*_{\beta^+}/V^*_\beta)_\lambda$ is
  irreducible over $\fgl^M_{\beta^+}/\fgl^M_\beta$, and its
  endomorphism ring is $\bK$.
\end{lemma}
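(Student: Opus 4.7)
The lemma follows essentially immediately by combining the two preceding results. My plan is as follows.

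First, I observe that the ideal $\fgl^M_\beta \subseteq \fgl^M_{\beta^+}$ annihilates the quotient $V^*_{\beta^+}/V^*_\beta$, so the action of $\fgl^M_{\beta^+}$ on $V^*_{\beta^+}/V^*_\beta$ descends to an action of the quotient Lie algebra $\fgl^M_{\beta^+}/\fgl^M_\beta$. By \Cref{le.dense_gen}, this action is dense. Replacing $\fgl^M_{\beta^+}/\fgl^M_\beta$ by its image in $\End(V^*_{\beta^+}/V^*_\beta)$ (this replacement affects neither the invariant subspaces nor the endomorphism ring of any module on which the action factors through this image), we may regard the quotient Lie algebra as a Lie subalgebra of $\End\bigl(V^*_{\beta^+}/V^*_\beta\bigr)$ that acts densely on $V^*_{\beta^+}/V^*_\beta$.

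Second, I apply \Cref{le.sw} to the vector space $W = V^*_{\beta^+}/V^*_\beta$ with $\fG$ equal to the image just described. The conclusion of \Cref{le.sw} yields simultaneously that $W_\lambda = \bigl(V^*_{\beta^+}/V^*_\beta\bigr)_\lambda$ is simple as a $\fG$-module and that $\End_\fG(W_\lambda) \cong \bK$; the same statements then hold over $\fgl^M_{\beta^+}/\fgl^M_\beta$ (and a fortiori over $\fgl^M_{\beta^+}$).

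There is no genuine obstacle here: the substantive work has already been done in \Cref{le.dense_gen} (density of the action, which required the transfinite-induction construction of the sets $\Sigma_b$) and in \Cref{le.sw} (the general Schur--Weyl principle for dense Lie subalgebras of $\End(W)$). The present lemma is simply the instantiation of \Cref{le.sw} at the dense action identified in \Cref{le.dense_gen}.
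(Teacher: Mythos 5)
Your proposal is correct and matches the paper's own proof, which simply cites \Cref{le.sw} and \Cref{le.dense_gen}; your extra remark about passing to the image in $\End\bigl(V^*_{\beta^+}/V^*_\beta\bigr)$ just makes explicit the (harmless) step needed to literally fit the hypotheses of \Cref{le.sw}.
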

\begin{proof}
  This is an immediate application of \Cref{le.sw} and \Cref{le.dense_gen}.
\end{proof}

\begin{proof-of-simples}
We split the proof into two portions.

{\bf Part 1: Simplicity and endomorphism algebra of the object \Cref{eq:tensor}.} We
prove this by induction on $t$, the case $t=0$ being a consequence of \cite[Theorem 4.1]{PS2}.

Now assume that the statement holds for $t-1$ and set $\beta=\beta_t$
and $\lambda=\lambda_t$. \Cref{le.dense_gen} and \Cref{le.sw} ensure that the tensorand
$(V^*_{\beta^+}/V^*_\beta)_\lambda$ of \Cref{eq:tensor} is
simple over $\fgl^M_{\beta^+}/\fgl^M_{\beta}$ with scalar endomorphism
algebra. Setting
\begin{equation*}
  W={ V_{(\beta_{t-1},\lambda_{t-1}),\cdots,(\beta_{0},\lambda_{0}),\mu,\nu}},
\end{equation*}
we can then apply \Cref{le.simple_aux} to the ideal
$\fgl^M_\beta\subseteq \fgl^M_{\beta^+}$ (in the role of
$\mathfrak{I} \subseteq \fG$) to finish the proof.

{\bf Part 2: The simple objects are mutually nonisomorphic.} Suppose that
the modules
{$ V_{(\beta_t,\lambda_t),\cdots,(\beta_{0},\lambda_{0}),\mu,\nu}$ and
  $V_{(\beta'_q,\lambda'_q),\cdots,(\beta'_{0},\lambda'_{0}),\mu',\nu'}$
  are isomorphic}. Restricting first to $\fgl^M_{\aleph_0}=V_*\otimes V$, over which the
modules are direct sums of copies of $V_{\mu,\nu}$ and $V_{\mu',\nu'}$
respectively, we get $\mu'=\mu$ and $\nu'=\nu$. We can now proceed
recursively in the following fashion.

Assume that for some $u\le\min (q,t)$ we have shown that
\begin{equation*}
  \beta_s=\beta'_s\text{ and } \lambda_s=\lambda'_s\text{ for } 0\le s\le u.
\end{equation*}
Then, setting 
\begin{equation*}
   W={ V_{(\beta_u,\lambda_u),\cdots,(\beta_{0},\lambda_{0}),\mu,\nu}}
\end{equation*}
and 
\begin{equation*}
  \mathfrak{I} \subseteq \fG \text{ to be } \fgl^M_{\beta_{u+1}}\subseteq \fgl^M\,,
\end{equation*}
 we conclude from \Cref{le.simple_aux} that 
 \begin{equation*}
   \bigotimes_{s=t}^{u+1}(V^*_{\beta_s^+}/V^*_{\beta_s})_{\lambda_s} \cong  \bigotimes_{s=q}^{u+1}(V^*_{\beta'^+_s}/V^*_{\beta'_s})_{\lambda'_s}.
 \end{equation*}
 Restricting this isomorphism to $\fgl^M_{\beta^+}/\fgl^M_\beta$ for
 $\beta=\min(\beta_{u+1},\beta'_{u+1})$ we conclude that
 $\beta_{u+1}=\beta'_{u+1}$ and $\lambda_{u+1}=\lambda'_{u+1}$. We can
 now repeat the procedure with $u$ in place of $u+1$, until the
 process terminates. This can only happen if $q=t$ and the
 corresponding $\beta_s$ and $\beta'_s$ are equal, and similarly for
 $\lambda_s$ and $\lambda'_s$.
\end{proof-of-simples}

\subsection{Ordering $\bar{\bT}_\alpha$}\label{subse.order}

Here we explain how the category $\bar{\bT}_\alpha$ fits
into the setting of \Cref{se.abs}.

Our objects $X_i$ will be {finite} tensor products of the form
\begin{equation}\label{eq:xi}
  { \left(\bigotimes_{\beta} \,\left(V^*/V^*_{\beta}\right)^{\otimes n_\beta}\right)\otimes (V^*)^{\otimes n}\otimes V^{\otimes m}}
\end{equation}
for infinite cardinal numbers $\beta\le\alpha$.  {In this way,} the
underlying set $I$ of the poset indexing the objects $X_i$ consists of
all finite tuples
\begin{equation*}
  (n_\beta,n,{m})_{\beta\le \alpha}
\end{equation*}
of nonnegative integers where almost all $n_\beta$ vanish.  We define a partial order on $I$ by setting
\begin{equation*}
  (n_\beta,n,{m}) {\preceq} (n'_\beta,n',{m'})
\end{equation*}
{if and only the following conditions hold:}
\begin{enumerate}
  \renewcommand{\labelenumi}{(\alph{enumi})}
    \item if $\beta$ is the largest cardinal {with} $n_\beta\ne n_\beta'$ then $n_\beta>n_\beta'$;
    \item $n\le n'$  and {$m\le m'$};
    \item {$\sum_\beta n_\beta+n-m = \sum_\beta n_\beta'+n'-m'$}.
\end{enumerate}

It is easy to check that the opposite poset $(I,\preceq)^{\text{op}}$ is well-ordered.   In order to show that the above choice of objects $X_i$ for $i\in I$ makes $\bar{\bT}_\alpha$ an ordered Grothendieck category, we start with

\begin{lemma}\label{le.ess}
  Let $\beta_0<\cdots<\beta_t\le \alpha$ be infinite cardinal numbers,
  and $\lambda_t,\ldots,\lambda_0,\mu,\nu$ be arbitrary Young diagrams.

  Then, the object of $\bT_\alpha$
  \begin{equation}\label{eq:injective}
    \bigotimes_{s=t}^0 (V^*/V^*_{\beta_s})_{\lambda_s}\otimes (V^*)_{\mu}\otimes V_{\nu}
  \end{equation}
is an essential extension of   
  \begin{equation}\label{eq:simple}
  V_{(\beta_t,\lambda_t),\cdots,(\beta_0,\lambda_0),\mu,\nu}=\bigotimes_{s=t}^0 (V^*_{\beta_s^+}/V^*_{\beta_s})_{\lambda_s}\otimes V_{\mu,\nu}.
\end{equation}
\end{lemma}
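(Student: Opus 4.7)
The plan is to first establish the embedding by taking tensor products of factor-by-factor inclusions, and then prove essentialness by showing that the $\fgl^M$-socle of the ambient module \eqref{eq:injective} coincides with the simple object \eqref{eq:simple}.  For the embedding, each monomorphism $V^*_{\beta_s^+}\hookrightarrow V^*$ descends to $V^*_{\beta_s^+}/V^*_{\beta_s}\hookrightarrow V^*/V^*_{\beta_s}$, which the Schur functor preserves; meanwhile $V_{\mu,\nu}$ lies in $(V_*)_\mu\otimes V_\nu\subseteq (V^*)_\mu\otimes V_\nu$ by definition.  Since \eqref{eq:simple} is simple by \Cref{pr.simples}, essentialness reduces to the socle identity.

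For the socle computation I proceed by iterated applications of \Cref{le.simple_aux} along the chain of ideals $\fgl^M_{\aleph_0}\subseteq \fgl^M_{\beta_0^+}\subseteq \cdots\subseteq \fgl^M_{\beta_t^+}$.  First, restrict to $\mathfrak{I}_0:=\fgl^M_{\aleph_0}=V_*\otimes V$: this ideal acts trivially on every factor $(V^*/V^*_{\beta_s})_{\lambda_s}$ (since $V_*=V^*_{\aleph_0}\subseteq V^*_{\beta_s}$), and its socle on $(V^*)_\mu\otimes V_\nu$ is precisely $V_{\mu,\nu}$.  The latter is the base case: one shows that $(V_*)_\mu\otimes V_\nu$ is $\mathfrak{I}_0$-essential in $(V^*)_\mu\otimes V_\nu$ (any element can be moved into $(V_*)_\mu\otimes V_\nu$ by finite-rank operators acting on successive tensor factors), and that the socle of $(V_*)_\mu\otimes V_\nu$ is $V_{\mu,\nu}$, a structural result from the countable-dimensional analysis of \cite{PS} transported to the general case via \cite[Thm.~5.5]{PS2}.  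Every $\fgl^M$-simple submodule of \eqref{eq:injective} is therefore contained in $\bigotimes_{s=t}^0 (V^*/V^*_{\beta_s})_{\lambda_s}\otimes V_{\mu,\nu}$, and the bijection of \Cref{le.simple_aux} with $W=V_{\mu,\nu}$ reduces the problem to showing that the $\fgl^M$-socle of $A:=\bigotimes_{s=t}^0 (V^*/V^*_{\beta_s})_{\lambda_s}$ is $\bigotimes_{s=t}^0 (V^*_{\beta_s^+}/V^*_{\beta_s})_{\lambda_s}$.

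The latter is proved by induction on $t$, peeling off the smallest-cardinal factor $(V^*/V^*_{\beta_0})_{\lambda_0}$ through the ideal $\fgl^M_{\beta_0^+}$: this ideal acts trivially on $\bigotimes_{s\geq 1}(V^*/V^*_{\beta_s})_{\lambda_s}$ (since $\beta_0^+\leq \beta_s$ for $s\geq 1$) and, via \Cref{le.dense_gen} together with \Cref{le.lambdas}, acts densely and irreducibly on $(V^*_{\beta_0^+}/V^*_{\beta_0})_{\lambda_0}$ with scalar endomorphism algebra.  A final appeal to \Cref{le.simple_aux} with $W=(V^*_{\beta_0^+}/V^*_{\beta_0})_{\lambda_0}$ then reduces the statement to its analogue with $t-1$ cardinal factors.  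The principal obstacle is to verify that $(V^*_{\beta_0^+}/V^*_{\beta_0})_{\lambda_0}$ is in fact the entire $\fgl^M_{\beta_0^+}$-socle of $(V^*/V^*_{\beta_0})_{\lambda_0}$; I expect this to follow from an explicit density argument in the spirit of the proof of \Cref{le.dense}, constructing a matrix in $\fgl^M_{\beta_0^+}$ that carries any prescribed nonzero element of $(V^*/V^*_{\beta_0})_{\lambda_0}$ into the designated simple subquotient, together with the compatibility of this reduction with the Schur functor $\bullet_{\lambda_0}$.
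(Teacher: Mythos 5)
Your overall architecture (restrict to the chain of ideals $\fgl^M_{\aleph_0}\subset\fgl^M_{\beta_0^+}\subset\cdots$, handle the $(V^*)_\mu\otimes V_\nu$ tensorand via $V_*\otimes V$, then peel off the cardinal factors one at a time) is the same as the paper's, but there is a genuine gap: the step you yourself flag as the ``principal obstacle'' is precisely the content of the lemma, and neither \Cref{le.simple_aux} nor the density results supply it. \Cref{le.simple_aux} says that $\bullet\otimes W$ is fully faithful and simplicity-preserving; it does not tell you that the $\fgl^M_{\beta_0^+}$-socle of $(V^*/V^*_{\beta_0})_{\lambda_0}$ is $(V^*_{\beta_0^+}/V^*_{\beta_0})_{\lambda_0}$, and to even invoke the lemma in the form you want you would need that essentiality established first. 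Likewise, the density statements (\Cref{le.dense}, \Cref{le.dense_gen}) concern the action of $\fgl^M_{\beta_0^+}/\fgl^M_{\beta_0}$ on the simple quotient $V^*_{\beta_0^+}/V^*_{\beta_0}$; they say nothing about elements of $V^*/V^*_{\beta_0}$ that lie outside this subquotient, so ``an explicit density argument in the spirit of \Cref{le.dense}'' would not, as stated, produce the essentiality you need. What is actually required is a different, pointwise nonvanishing statement: for $0\ne v\in (V^*/V^*_{\beta_0})_{\lambda_0}\otimes\cdots$, one must check that $\fgl^M_{\beta_0^+}\cdot v\ne 0$, and iterate, keeping track of the fact that each application of $\fgl^M_{\beta_0^+}$ drives the relevant tensorand into $V^*_{\beta_0^+}/V^*_{\beta_0}$. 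That is the hands-on computation the paper carries out; you have replaced it with an appeal to a lemma that does not apply and a deferred verification.

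The paper's route is also somewhat leaner than yours: rather than characterizing the $\fgl^M$-socle of the ambient object (which, to make rigorous, requires the auxiliary observation that the intermediate objects are $\fgl^M$-submodules and a Jacobson-density argument to see that submodules of $A\otimes W$ are of the form $A'\otimes W$), it simply exhibits a finite chain of inclusions from \eqref{eq:simple} up to \eqref{eq:injective} and proves that each inclusion is essential over the appropriate ideal ($V_*\otimes V$ for the first link, $\fgl^M_{\beta_s^+}$ for the subsequent ones). Essentiality over an ideal trivially implies essentiality over $\fgl^M$, and essentiality is transitive along a chain, so the conclusion follows without invoking \Cref{le.simple_aux} at all. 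If you fix the nonvanishing step and drop the socle bookkeeping, your proof would converge to the paper's.
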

\begin{proof}

  {\bf Step 1.} In first instance we argue that \Cref{eq:simple} is essential
  in
  \begin{equation}\label{eq:simple'}
    \bigotimes_{s=t}^0 (V^*_{\beta_s^+}/V^*_{\beta_s})_{\lambda_s}\otimes  (V^*)_{\mu}\otimes V_{\nu}.
  \end{equation}
To this end, note first that it suffices to show that \Cref{eq:simple}
is essential in \Cref{eq:simple'} when regarded as a module over the ideal $V_*\otimes V$ of $\fgl^M$. Since $V_*\otimes V$ annihilates $ \bigotimes_{s=t}^0 (V^*_{\beta_s^+}/V^*_{\beta_s})_{\lambda_s}$, this in turn
reduces to showing that $V_{\mu,\nu}$ is essential in
$(V^*)_\mu\otimes V_\nu$ as a module over $V_*\otimes V$.

The inclusions
  \begin{equation*}
    (V_*)^{\otimes n}\otimes V^{\otimes m}\subset (V^*)^{\otimes n}\otimes V^{\otimes m}
  \end{equation*}
  are essential: for any $v\ne 0$ belonging to the right-hand side, the
  $V_*\otimes V$-module generated by $v$ is nonzero and contained
  in the left-hand side.

  Now simply apply this remark to a traceless $v$ in the image of the
  Young symmetrization operator sending
  $(V^*)^{\otimes n}\otimes V^{\otimes m}$ to
  $(V^*)_\mu\otimes V_\nu$.

  This concludes the proof that \Cref{eq:simple} is
  essential in \Cref{eq:simple'}.

  \vspace{.5cm}

  {\bf Step 2.} We next argue that \Cref{eq:simple'} is essential in
  \begin{equation}\label{eq:simple''}
    \bigotimes_{s=t}^1 (V^*_{\beta_s^+}/V^*_{\beta_s})_{\lambda_s}\otimes (V^*/V^*_{\beta_0})_{\lambda_0}\otimes  (V^*)_{\mu}\otimes V_{\nu}.
  \end{equation}
  This is very similar in spirit to the proof of Step 1: it is enough
  to prove that the extension in question is essential over the Lie
  subalgebra $\fgl^M_{\beta_0^{+}}\subseteq \fgl^M$ which annihilates
  the $s$-indexed tensorands in \Cref{eq:simple''} and maps the middle
  tensorand $(V^*/V^*_{\beta_0})_{\lambda_0}$ into
  $(V^*_{\beta_0^+}/V^*_{\beta_0})_{\lambda_0}$.

  As before, it suffices to observe that $\fgl^M_{\beta_0^+}$ does not
  annihilate any nonzero elements of
  $(V^*/V^*_{\beta_0})_{\lambda_0}\otimes (V^*)_{\mu}\otimes V_{\nu}$.

  \vspace{.5cm}

  {\bf Step 3: conclusion.} We now repeat the argument in Step 2
  inductively, each time replacing one $V^*_{\beta_s^+}$ by $V^*$ and
  working over the ever-larger Lie algebra
  $\fgl^M_{\beta_s^{+}}$.  After exhausting all tensorands, we obtain a tower of essential extensions with the simple object (\ref{eq:injective}) at the bottom and the object (\ref{eq:xi}) at the top.  The desired conclusion follows.
\end{proof}

%

\begin{corollary}
	The transfinite filtration (\ref{eq.transfilt}) is the transfinite socle filtration of the object $V^*$ of $\bT_\alpha$.
\end{corollary}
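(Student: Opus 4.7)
The plan is to argue by transfinite induction that the filtration~\Cref{eq.transfilt} coincides term-by-term with the socle filtration of $V^*$ in $\bT_\alpha$. The induction splits into a base case (identification of $\soc V^*$), a successor step identifying $\soc(V^*/V^*_\beta)$ with $V^*_{\beta^+}/V^*_\beta$ for each infinite cardinal $\beta \le \alpha$, and a limit step which is built into the definitions. I expect the successor step to be the main technical point.

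For the base case, I would first observe that $V_* = V^*_{\aleph_0}$ is precisely the simple object $V_{(1),\emptyset}$ of \Cref{pr.simples} (taking $t=0$, $\mu=(1)$, $\nu=\emptyset$). To show it is the entire socle, I would verify that every nonzero $\fgl^M$-submodule of $V^*$ contains $V_*$: given $v \in V^*$ with $v_{\kappa'} \ne 0$ at some index $\kappa'$, the action of $v^*_\kappa \otimes v_{\kappa'} \in V_* \otimes V = \fgl^M_{\aleph_0}$ produces a nonzero scalar multiple of the basis vector $v^*_\kappa \in V_*$ for every $\kappa \in \Sigma$. This forces $V_*$ to be the unique simple submodule of $V^*$, so $\soc V^* = V_*$.

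For the successor step, assume inductively that a term $V^*_\beta$ of~\Cref{eq.transfilt} has been matched with the corresponding term of the socle filtration. I would first establish the easy inclusion $V^*_{\beta^+}/V^*_\beta \subseteq \soc(V^*/V^*_\beta)$ by noting that $\fgl^M_\beta \cdot V^* \subseteq V^*_\beta$ (since $vg$ is supported in the strictly fewer than $\beta$ nonzero columns of $g$), whence $\fgl^M_\beta$ acts trivially on $V^*/V^*_\beta$ and \Cref{le.lambdas} with $\lambda = (1)$ applies. The crucial reverse inclusion is obtained, I plan, as follows: given any nonzero $\bar v \in V^*/V^*_\beta$, lift it to $v \in V^* \setminus V^*_\beta$ (whose support has cardinality at least $\beta$), select a subset $T \subseteq \mathrm{supp}(v)$ of cardinality exactly $\beta$, and act on $v$ by the diagonal idempotent $g \in \fgl^M_{\beta^+}$ supported on $T$. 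Then $vg$ is just the restriction of $v$ to $T$, has support exactly $T$, and represents a nonzero class in $V^*_{\beta^+}/V^*_\beta$. Any simple $\fgl^M$-submodule of $V^*/V^*_\beta$ containing $\bar v$ therefore meets, and hence is contained in, the simple object $V^*_{\beta^+}/V^*_\beta$, giving the required equality.

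The limit step is automatic: the socle filtration at a limit ordinal is the union of its predecessors, and for every limit cardinal $\beta \le \alpha^+$ the definition of $V^*_\beta$ as row vectors with strictly fewer than $\beta$ nonzero entries yields $V^*_\beta = \bigcup_{\beta' < \beta} V^*_{\beta'}$. Matching successor cardinals in~\Cref{eq.transfilt} to successor ordinals in the socle filtration and taking unions at limits, the induction closes and terminates at $V^* = V^*_{\alpha^+}$.
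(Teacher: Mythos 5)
Your proof is correct, and the overall shape (transfinite induction: base case, successor step via an essentiality argument, limit step by taking unions) is the right one. But you take a more elementary route than the paper does for the crucial successor step. The paper's proof is a one-liner: it cites the simplicity of the layers $V^*_{\beta^+}/V^*_\beta$ (from \Cref{pr.simples} or \Cref{le.lambdas}) together with the essentiality of the extension $V^*_{\beta^+}/V^*_\beta \subset V^*/V^*_\beta$, which is the special case of \Cref{le.ess} with $t=0$, $\lambda_0=(1)$, $\mu=\nu=\emptyset$. You instead re-derive this essentiality from scratch by a concrete matrix computation: right-multiplying a row vector of support size at least $\beta$ by the diagonal idempotent supported on a $\beta$-sized subset $T$ of the support produces a representative of a nonzero class in $V^*_{\beta^+}/V^*_\beta$. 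This is a clean, self-contained argument; what it buys is that one sees exactly why the extension is essential without unwinding the multi-step reduction in the proof of \Cref{le.ess}, at the cost of duplicating work the paper has already done. Your base-case argument using the action of $v^*_\kappa\otimes v_{\kappa'}\in V_*\otimes V$ is likewise a hands-on version of the rank-one special case of the same essentiality phenomenon.

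Two minor remarks on precision. First, in the base case you write ``taking $t=0$, $\mu=(1)$, $\nu=\emptyset$''; strictly speaking $V_* = V_{(1),\emptyset}$ is the case with the tuple of cardinals empty (or, equivalently, with $\lambda_0=\emptyset$), not literally $t=0$ with a nonempty $\lambda_0$. Second, when you conclude ``Any simple $\fgl^M$-submodule $\ldots$ therefore meets, and hence is contained in, the simple object $V^*_{\beta^+}/V^*_\beta$,'' it is worth saying explicitly that what you are establishing is essentiality of $V^*_{\beta^+}/V^*_\beta$ in $V^*/V^*_\beta$, and then invoking the general fact that an essential semisimple subobject equals the socle. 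Neither remark affects the correctness of the argument.
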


\begin{proof}
	This follows immediately from the simplicity of the objects $V^*_{\beta^+}/V^*_\beta$ for cardinals $\beta\leq \alpha$ and from the fact that $V^*/V^*_\beta$ is an essential extension of $V^*_{\beta^+}/V^*_\beta$.
\end{proof}

\begin{proposition}\label{pr.isordered} {For the above choice of the
    poset $I$ and the objects $X_i$, for $i\in I$,
    the Grothendieck envelope $\bar{\bT}_\alpha$ is is an ordered
    Grothendieck category.} 
\end{proposition}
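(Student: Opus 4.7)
The plan is to verify directly the hypotheses of \Cref{def.ordered}: that each $X_i$ is semi-artinian, and that conditions (i)--(v) hold for the specified poset $(I,\preceq)$ and objects $X_i$. I would organize the argument to lean on \Cref{pr.simples}, \Cref{le.ess}, and the density results of \Cref{le.dense,le.dense_gen}.

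Condition (i) is essentially tautological: by \Cref{def.t}, every object of $\bT_\alpha$ is a subquotient of a finite direct sum of tensor powers $(V^*)^{\otimes n}\otimes V^{\otimes m}$, and these appear among the $X_i$ (with all $n_\beta=0$); arbitrary objects of $\bar{\bT}_\alpha$ are then sums of such by the definition of the Grothendieck envelope. Semi-artinianness of each tensorand $V^*/V^*_\beta$ follows by identifying its submodule lattice as the chain $\{V^*_\gamma/V^*_\beta\}_{\beta\le\gamma\le\alpha^+}$, with successive quotients simple by \Cref{le.dense_gen}; a standard tensor-factor induction then propagates semi-artinianness to each $X_i$.

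I would then address (ii)--(iv) together by distributing Schur functors across the tensorands of $X_i$. This writes $X_i$ as a finite direct sum, with multiplicities, of objects $\bigotimes_s (V^*/V^*_{\beta_s})_{\lambda_s}\otimes (V^*)_\mu\otimes V_\nu$ with $|\lambda_s|=n_{\beta_s}$, $|\mu|=n$, $|\nu|=m$; each such summand is an essential extension of the simple $V_{(\beta_t,\lambda_t),\ldots,(\beta_0,\lambda_0),\mu,\nu}$ by \Cref{le.ess}. This yields (iv) and identifies $\mathcal{S}_i$; disjointness in (ii) is then the uniqueness portion of \Cref{pr.simples} and exhaustiveness is its existence portion. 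For (iii), the higher layers of the transfinite socle filtration of each summand are themselves simple, parametrized by replacing some $\beta_s$ in the simple socle by a strictly larger cardinal $\gamma$; the corresponding index $j$ obtained from $i$ by decreasing $n_{\beta_s}$ and increasing $n_\gamma$ satisfies $j\prec i$ directly from the definition of the poset $(I,\preceq)$.

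The main obstacle will be condition (v). For each pair $i\succ j$ I will assemble the required family of morphisms $X_i\to X_j$ out of three elementary operations applied to individual tensorands: the projection $V^*\twoheadrightarrow V^*/V^*_\beta$, the projections $V^*/V^*_\beta\twoheadrightarrow V^*/V^*_{\beta'}$ for $\beta<\beta'$ (induced by $V^*_\beta\subset V^*_{\beta'}$), and the pairings $V^*\otimes V\to\bK$. These realize exactly the elementary index moves defining the relation $j\prec i$. The inclusion $Y_{i\succ j}\subseteq \ker$ follows by checking on simple constituents: a constituent indexed by some $k$ with $i\succeq k\succ j$ necessarily involves a tensor factor $V^*_{\gamma^+}/V^*_\gamma$ for a cardinal $\gamma$ strictly larger than anything seen by the target $X_j$, and so is annihilated by each generating morphism. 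The reverse inclusion --- that every element with a constituent in $\mathcal{S}_l$ for $l\preceq j$ is not annihilated by the full family --- will use the density statements \Cref{le.dense,le.dense_gen} to show the family is rich enough to separate these constituents from $Y_{i\succ j}$. Organizing these morphisms and verifying the joint-kernel identification over the transfinite poset $(I,\preceq)$ is the technical core of the proof and substantially generalizes the $\alpha=\aleph_0$ case treated in~\cite{us}.
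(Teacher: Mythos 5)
Your proposal follows essentially the same route as the paper's proof: it verifies conditions (i)--(v) of \Cref{def.ordered} directly, leans on \Cref{pr.simples} and \Cref{le.ess} to get (ii) and (iv), and for (v) builds the required family of morphisms from the same elementary pieces (surjections $V^*/V^*_\beta\to V^*/V^*_\gamma$ and contractions $V^*\otimes V\to\bK$) that the paper uses.

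One small inaccuracy in your treatment of (iii): the layers of the transfinite socle filtration of a summand of $X_i$ are semisimple rather than simple, and their simple constituents arise from \emph{two} kinds of elementary index moves, not one. Besides the move you describe (pushing one tensorand from $V^*_{\beta_s^+}/V^*_{\beta_s}$ up to $V^*_{\gamma^+}/V^*_\gamma$ for $\gamma>\beta_s$, decreasing $n_{\beta_s}$ and increasing $n_\gamma$), there are also contraction moves coming from the filtration of $(V_*)_\mu\otimes V_\nu$ via \cite[Theorem 2.2]{PS}, which decrease $n$ and $m$ by one each. (Compare the two columns of Example \ref{ex.examples}: the right-hand column visibly contains constituents with smaller $|\nu|$.) Both move types satisfy $j\prec i$, so your conclusion still holds once the omission is repaired; the paper sidesteps this by explicitly tensoring the filtration from \cite{PS} with those of the $V^*/V^*_{\beta_s}$. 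Your explicit check that each $X_i$ is semi-artinian is a worthwhile addition — it is a standing hypothesis of \Cref{def.ordered} that the paper's proof takes for granted.
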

\begin{proof}
  Part (i) of \Cref{def.ordered} is implicit in \Cref{def.t}.  For part (ii), note that $X_i$ is a direct sum of objects of the form (\ref{eq:injective}).  Since (\ref{eq:injective}) is an essential extension of (\ref{eq:simple}) by Lemma \ref{le.ess}, and the object (\ref{eq:simple}) is simple by Proposition~\ref{pr.simples}, $\soc X_i$ is a direct sum of objects (\ref{eq:simple}) with $\left|\lambda_\beta\right|=n_\beta$, $|\mu|=n$, $|\nu|=m$.  This implies (ii).  Now, part (iv) is also clear as the object (\ref{eq:injective}) is indecomposable with simple socle (\ref{eq:simple}). 
  
  For part (iii), one has to check that any simple subquotient of an object (\ref{eq:injective}) satisfies conditions a), b), and c) from the definition of the partial order $\preceq$ on $I$.  This is straightforward if we note that the injective object \Cref{eq:injective} has a filtration obtained by tensoring the filtration of
$(V_*)_\mu\otimes V_\nu$ from \cite[Theorem 2.2]{PS} and those of the
various $V^*/V^*_{\beta_s}$ with simple subquotients of the form
$V^*_{\beta^+_r}/V^*_{\beta_r}$.   We leave the details to the reader.  See also Example \ref{ex.examples} below for a particular case where all simple subquotients of (\ref{eq:injective}) are displayed explicitly.
%
%

  Finally, for part (v), it is a rather routine verification that having
  fixed $i$ and $j$ in $I$ as in that portion of \Cref{def.ordered},
  the morphisms $X_i\to X_j$ obtained by composing and tensoring
  surjections $V^*/V^*_\beta\to V^*/V^*_\gamma$ for $\gamma\ge \beta$
  and contractions $V^*\otimes V\to \bC$ will satisfy the condition.
\end{proof}

Our next result classifies the indecomposable injective objects of
$\overline{\bT}_\alpha${; these happen to already be contained in $\bT_\alpha$}.

\begin{theorem}\label{th.indec_inj-gen}
  The indecomposable injectives in the category
  $\overline{\bT}_\alpha$ are, up to isomorphism, the objects
  \Cref{eq:injective} with respective socles \Cref{eq:simple}, where
  the choices range over tuples of infinite cardinal numbers
  $\beta_0<\cdots<\beta_t\le \alpha$ and Young diagrams
  $\lambda_t,\ldots,\lambda_0,\mu,\nu$.
\end{theorem}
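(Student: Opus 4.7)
The plan is to exploit the structural results already established for $\bar{\bT}_\alpha$, together with the essential-extension analysis of \Cref{le.ess} and the classification of simples in \Cref{pr.simples}.  Since \Cref{pr.isordered} puts $\bar{\bT}_\alpha$ into the framework of \Cref{se.abs}, \Cref{cor.indec-inj} asserts that every indecomposable injective in $\bar{\bT}_\alpha$ is isomorphic to an indecomposable direct summand of some $X_i$, while \Cref{pr.inj-hull} identifies those summands as the injective hulls of the simple objects $U\in\cS_i$.  By \Cref{pr.simples}, the simple objects are exhausted by the modules \eqref{eq:simple} indexed by tuples $(\beta_s,\lambda_s)_{s=0}^t$, $\mu$, $\nu$.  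The task therefore reduces to constructing, for each such tuple, the injective hull of the simple \eqref{eq:simple} and recognizing it as \eqref{eq:injective}.

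For this, I would fix parameter data as above and set $n_{\beta_s}=|\lambda_s|$, $n=|\mu|$, $m=|\nu|$ (and $n_\beta=0$ for all other $\beta$) in the definition \eqref{eq:xi} of $X_i$.  Applying the Schur functors $\bullet_{\lambda_s}$, $\bullet_\mu$, $\bullet_\nu$ termwise realizes \eqref{eq:injective} as a direct summand of $X_i$, since in characteristic zero the relevant Young symmetrizers are splitting idempotents in the group algebras of the symmetric groups.  \Cref{le.ess} then shows that \eqref{eq:injective} is an essential extension of the simple \eqref{eq:simple}, hence has socle equal to \eqref{eq:simple} and is indecomposable.  Condition (iv) of \Cref{def.ordered} together with \Cref{pr.inj-hull} implies that any direct summand of $X_i$ whose socle is \eqref{eq:simple} is, up to isomorphism, the injective hull of that simple; since \eqref{eq:injective} is such a summand, it must be an injective hull of \eqref{eq:simple}.

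It remains to note that, by the nonisomorphism portion of \Cref{pr.simples}, distinct parameter tuples yield pairwise nonisomorphic simple modules, so the corresponding injective hulls \eqref{eq:injective} are pairwise nonisomorphic as well, giving the claimed parametrization.  The only step requiring a bit of care — though not, given the preparation, a serious obstacle — is the verification that \eqref{eq:injective} is genuinely a direct summand of $X_i$ rather than merely a subquotient; as indicated, this is settled by the splitting of Young symmetrizers in characteristic zero, which commutes with the additional tensorands $V^*/V^*_{\beta_s}$, $V^*$ and $V$.
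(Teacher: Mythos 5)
Your argument is correct and follows essentially the same route as the paper's own proof, which simply cites \Cref{pr.isordered}, \Cref{cor.indec-inj}, \Cref{pr.simples}, and \Cref{le.ess}. The only difference is that you spell out the implicit step that \Cref{eq:injective} really is a direct summand (not merely a subquotient) of $X_i$ via the splitting of Young symmetrizers in characteristic zero, which the paper leaves to the reader (it is also used, without comment, in the proof of \Cref{pr.isordered}).
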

\begin{proof}
  This is a consequence of \Cref{pr.isordered} and \Cref{cor.indec-inj} together
  with the classification of simple objects from \Cref{pr.simples} and
  the fact that \Cref{eq:simple} is essential in \Cref{eq:injective}
  via \Cref{le.ess}.
\end{proof}

\subsection{Blocks of $\bar{\bT}_\alpha$}
\label{sec.blocksbT}

We will show that the integers appearing on the two sides of
the equality (c) in the definition of $(I,{\preceq})$ in Section \ref{subse.order} in fact
parametrize the blocks of the category $\overline{\bT}_\alpha$.  First, let us recall

\begin{definition}\label{def.blocks}
  Suppose the class $\mathrm{Indec}(\cC)$ of isomorphism classes of indecomposable objects of
  an abelian category $\cC$ is a set. The {\it blocks} of $\cC$ are
  the classes of the finest equivalence relation on
  $\mathrm{Indec}(\cC)$ requiring that objects $Z,Y$ with
  $\Hom_\cC(Z,Y)\ne 0$ belong to the same class.
\end{definition}

\begin{theorem}\label{th.blocks}
  The blocks of $\overline{\bT}_\alpha$ are parametrized
  by $\bZ$, and the simple object 
    {$V_{(\beta_t,\lambda_t),\cdots,(\beta_0,\lambda_0),\mu,\nu}$}
  belongs to the block indexed by $\sum\limits_{s=t}^0|\lambda_s|+|\mu|-|\nu|$.
\end{theorem}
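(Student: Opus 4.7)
The plan is to produce a $\bZ$-valued invariant on indecomposable objects that is preserved by nonzero morphisms, and then to show that any two indecomposables with the same value lie in the same block. For the invariant I would exploit the fact that the identity matrix $I$ lies in $\fgl^M$ (since it preserves the pairing $\mathbf{p}$) and is central there; using the formula $g\cdot v=-vg$ on $V^*$, the element $I$ acts as $-1$ on $V^*$ and as $+1$ on $V$, hence as the scalar $m-n$ on $(V^*)^{\otimes n}\otimes V^{\otimes m}$. Consequently $I$ acts semisimply with integer eigenvalues on every object of $\bar{\bT}_\alpha$, every indecomposable lies in a single $I$-eigenspace, and no nonzero morphism can relate indecomposables of different eigenvalues. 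A direct computation on \Cref{eq:tensor} shows that $I$ acts as $-d$ on $V_{(\beta_t,\lambda_t),\cdots,(\beta_0,\lambda_0),\mu,\nu}$ with $d=\sum_{s=t}^0|\lambda_s|+|\mu|-|\nu|$, so the block partition necessarily refines the $\bZ$-grading by $d$.

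For the converse, I would connect all indecomposables of a fixed degree $d$ by chains of nonzero morphisms. Since $\bar{\bT}_\alpha$ is semi-artinian by \Cref{re.is-sa}, any nonzero indecomposable $Z$ contains a simple submodule $T$; by injectivity of the injective hull $\tilde T$ (classified in \Cref{th.indec_inj-gen}) the embedding $T\hookrightarrow \tilde T$ extends to a nonzero morphism $Z\to \tilde T$, reducing the problem to connecting the indecomposable injectives of degree $d$. Next, the socle $T=V_{(\beta_t,\lambda_t),\cdots,(\beta_0,\lambda_0),\mu,\nu}$ of such an injective is a subquotient of $X_{N,M}:=(V^*)^{\otimes N}\otimes V^{\otimes M}$ for $N=\sum|\lambda_s|+|\mu|$ and $M=|\nu|$, and injectivity of $\tilde T$ again yields a nonzero morphism $X_{N,M}\to \tilde T$. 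Decomposing $X_{N,M}$ via Schur--Weyl into a finite direct sum of indecomposable injectives $(V^*)_\sigma\otimes V_\tau$ with $|\sigma|=N$ and $|\tau|=M$, this morphism is nonzero on at least one summand, placing $\tilde T$ in the block of some $(V^*)_\sigma\otimes V_\tau$.

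The remaining and, in my view, main technical step is to connect the various $(V^*)_\sigma\otimes V_\tau$ with $|\sigma|-|\tau|=d$ among themselves. The pairing $\mathbf{p}$ produces a nonzero contraction $(V^*)^{\otimes n}\otimes V^{\otimes m}\to (V^*)^{\otimes (n-1)}\otimes V^{\otimes (m-1)}$ which, by $S_n\times S_m$-equivariance and branching, restricts to a nonzero morphism $(V^*)_\sigma\otimes V_\tau\to (V^*)_{\sigma'}\otimes V_{\tau'}$ whenever $\sigma'$ and $\tau'$ are obtained from $\sigma$ and $\tau$ by removing a removable corner (Pieri's rule). Iterating such contractions reduces any $(V^*)_\sigma\otimes V_\tau$ of degree $d$ to a base object: $\bK$ when $d=0$, some $(V^*)_\sigma$ with $|\sigma|=d$ when $d>0$, or some $V_\tau$ with $|\tau|=-d$ when $d<0$. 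To connect two $(V^*)_\sigma$ and $(V^*)_{\sigma''}$ of the same size $d$ (and symmetrically for the $V$-side), I would pass through the indecomposable intermediate $(V^*)_\rho\otimes V$, where $\rho$ is obtained from $\sigma$ by adding a box: branching gives nonzero morphisms $(V^*)_\rho\otimes V\to (V^*)_\sigma$ and $(V^*)_\rho\otimes V\to (V^*)_{\sigma''}$ whenever $\sigma''=\rho-\square$ for another removable corner $\square$ of $\rho$. This realizes the add-remove moves on the Young lattice, and standard connectedness of the graph they generate on partitions of $d$ completes the argument. The principal verifications required are the nonvanishing of the partial-contraction and Pieri-inclusion morphisms on the prescribed Schur isotypic components, together with the elementary combinatorics of the Young lattice.
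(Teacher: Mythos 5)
Your proposal is correct, and it takes a genuinely different route from the paper in both halves of the argument. For the implication ``different content $\Rightarrow$ different blocks,'' you use the central element $I\in\fgl^M$: since $I$ is central, commutes with morphisms, acts by $-1$ on $V^*$ and $+1$ on $V$ via the given action formulas, and hence by the integer scalar $m-n$ on $(V^*)^{\otimes n}\otimes V^{\otimes m}$, every object of $\bar{\bT}_\alpha$ (being a sum of subquotients of such tensor powers) is $I$-diagonalizable with integer eigenvalues; each indecomposable therefore sits in a single eigenspace and nonzero morphisms preserve the eigenvalue. The paper instead derives content-invariance from condition (c) in the definition of the partial order underlying the ordered-Grothendieck structure of Proposition~\ref{pr.isordered}. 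Your version is more explicit and makes the required diagonalizability and morphism-compatibility completely transparent, whereas the paper leaves those points somewhat implicit. For the converse ``same content $\Rightarrow$ same block,'' your reduction chain (indecomposable $\to$ its injective hull $\widetilde T$ $\to$ a Schur summand $(V^*)_\sigma\otimes V_\tau$ of $X_{N,M}$) is sound, and then you re-derive the connectedness of the degree-$d$ objects $(V^*)_\sigma\otimes V_\tau$ by Pieri/contraction moves on the Young lattice; the paper instead short-circuits this step by quoting the block classification of $\bT_{\fsl(\infty)}$ from \cite[Corollary 6.6]{DPS} together with the equivalence $\bT_{\End(V)}\simeq\bT_{\fsl(\infty)}$ from \cite{PS2}, after first surjecting $\widetilde T$ onto $(V^*/V^*_{\beta_t})_\lambda\otimes V_\nu$ and lifting that to $(V^*)_\lambda\otimes V_\nu$. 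What your approach buys is self-containedness, at the cost of having to actually verify the nonvanishing of the single-slot contraction on the prescribed Pieri isotypic components, which is precisely the content the citation of \cite{DPS} shoulders in the paper. One small imprecision to fix: the single-slot contraction $(V^*)^{\otimes n}\otimes V^{\otimes m}\to(V^*)^{\otimes(n-1)}\otimes V^{\otimes(m-1)}$ is only $S_{n-1}\times S_{m-1}$-equivariant, not $S_n\times S_m$-equivariant as your phrasing suggests, so the branching step should be run over $S_{n-1}\times S_{m-1}$ (this is what branching rules are for, and it does not affect the conclusion).
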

\begin{proof}
For a simple object $
  U=V_{(\beta_t,\lambda_t),\cdots,(\beta_0,\lambda_0),\mu,\nu}
$
as in the statement, let us refer to the integer 
\begin{equation*}
  \sum\limits_{s=t}^0|\lambda_s|+|\mu|-|\nu|\in \bZ
\end{equation*}
as the {\it content} of $U$. We split the proof into two halves.

\vspace{.5cm}

{\bf (1) Different content $\Rightarrow$ different blocks.} Since the content of a simple object $U$ is nothing but the expression in part c) of the definition of the partial order on $I$ (where $n_\beta=\left|\lambda_\beta\right|$, $n=|\mu|$, $m=|\nu|$), the fact that $\bar{\bT}_\alpha$ is an ordered Grothendieck category (Proposition \ref{pr.isordered}) implies that 
all
simple subquotients of the injective hull \Cref{eq:injective} of a
simple object $U$ as in \Cref{eq:simple} have the same content as $U$.

\vspace{.5cm}

{\bf (2) Same content $\Rightarrow$ same block.} Consider a simple
object $U$ of the form \Cref{eq:simple}. Its injective hull
\Cref{eq:injective} surjects onto
\begin{equation*}
  \bigotimes_{s=t}^0 (V^*/V^*_{\beta_t})_{\lambda_s} \otimes (V^*/V^*_{\beta_t})_{\mu}\otimes V_\nu,
\end{equation*}
so $U$ is in the same block as an injective object of the form
\begin{equation}
  (V^*/V^*_{\beta_t})_{\lambda}\otimes V_\nu,
  \label{eq.someobject}
\end{equation}
for some Young diagram $\lambda$.  In turn, the
object~(\ref{eq.someobject}) is a quotient of the indecomposable
injective $\left(V^*\right)_\lambda\otimes V_\nu$. 
Finally, the classification of blocks in $\bT_{\fsl(\infty)}$ from \cite[Corollary 6.6]{DPS}, together with the equivalence of categories $$\bT_{\End(V)}\simeq \bT_{\fsl(\infty)}$$ established in \cite{PS2}, shows that the block of $\left(V^*\right)_\lambda \otimes V_\nu$ depends only on the difference $|\lambda|-|\nu|$.  The result follows.
\end{proof}

\subsection{Vanishing Ext-functors and Koszulity}\label{se.exts}

Our main aim in the present subsection is to prove an analogue of
\cite[Theorem 3.11]{us}, improving on \Cref{pr.up-tri} and describing necessary
conditions for nonvanishing Ext-functors between the simple
objects of $\bT_\alpha$ described in \Cref{pr.simples}.


We start with a formula for the defect $d(i,j)$.  Consider two
elements $i$ and $j$ of $I$: $i=\left(n_\beta,n,m\right)$ and
$j=\left(n_\beta',n',m'\right)$.  Let
$\left\{\beta'_0<\beta'_1<\ldots<\beta'_{q'}\right\}$ be the union of
all infinite cardinals for which $n_\beta\neq 0$ or
$n'_{\beta'}\neq 0$.  Extend this set to a minimal set of cardinals
which is \emph{interval-closed} in the sense that whenever
$\beta_{j'}$ is a finite iterated successor $\beta_j^{++\cdots +}$,
all intermediate successors $\beta_j^+$, $\beta_j^{++}$, etc. belong
to the set.  Denote by $\left\{\beta_0<\ldots<\beta_q\right\}$ the
resulting finite set.

\begin{proposition}\label{pr.form}
Given $i=(n_\beta,n,m)\preceq j= (n'_\beta,n',m')$ with
finite defect $d(i,j)$, we have
\begin{equation}
d(i,j)=
n'-n+\sum_{s=0}^q\,s\left(n_s-n'_s\right)\,.
\label{eq.defect}
\end{equation}
\end{proposition}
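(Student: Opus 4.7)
My plan is to compute $d(i,j)$ by classifying the covering relations in $(I,\preceq)$ under the finite-defect hypothesis, verifying that each contributes exactly $1$ to the right-hand side of the claimed formula, and then extending to arbitrary chains. I expect the covering relations $i \prec k$ to be of three elementary types: (i) a single unit of $n_\beta$ moves from $\beta_{s+1}$ to $\beta_s$ for two consecutive cardinals $\beta_s \prec \beta_{s+1}$ in the interval-closed set; (ii) a single unit of $n_\beta$ moves from $\beta_0$ (the smallest cardinal in the interval-closed set) into $n$; (iii) $n$ and $m$ are each incremented by $1$. The interval-closed condition is crucial here: if $\beta_{s+1}$ were not a finite iterated successor of $\beta_s$, one could interpose infinitely many intermediate cardinals between them and produce arbitrarily long chains, contradicting the finite-defect hypothesis.

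A direct substitution verifies that each of the three elementary moves contributes exactly $1$ to
\[
f(i,j) := n' - n + \sum_{s=0}^q s(n_s - n'_s).
\]
For the lower bound $d(i,j) \geq f(i,j)$, I would construct an explicit chain from $i$ to $j$ of length $f(i,j)$ built out of these moves. A natural algorithm does the job: first consolidate each excess $n_\beta^i - n_\beta^j$ downward through consecutive cardinals via type-(i) moves, then discharge the remaining surplus at $\beta_0$ into $n$ via type-(ii) moves, and finally balance $n$ and $m$ against their target values via type-(iii) moves. The total step-count equals $f(i,j)$ by direct bookkeeping.

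For the matching upper bound, I would refine an arbitrary chain $i = i_0 \prec \cdots \prec i_q = j$ into one whose consecutive pairs are all coverings, by iterating the insertion of intermediate elements whenever a step is not itself a cover. The refined chain has length at least $q$, and by the classification in Step~1 each of its steps is of type (i), (ii) or (iii); thus its length coincides with the increment of $f$ along it, which is $f(i,j)$. Consequently $d(i,j) \leq f(i,j)$, and equality follows.

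The main obstacle will be establishing additivity of $f$ along chains: since the interval-closed set $\{\beta_0 < \cdots < \beta_q\}$ depends on the pair, combining $f(i,k)$ and $f(k,j)$ through an intermediate $k$ requires passing to a common enlarged interval-closed set and verifying that the identity is preserved under such enlargements (which may shift the indices $s$). A secondary but still delicate verification is the exhaustiveness of the covering classification, which needs to rule out atomic ``jumps'' mixing shifts across several cardinals in a single cover; the finite-defect hypothesis together with the interval-closed condition should rule these out.
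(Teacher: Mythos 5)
Your proposal is correct and follows essentially the same route as the paper's: classify the defect-one (covering) pairs, check that each contributes $1$ to the right-hand side, and propagate along chains via additivity of $f$ — which is precisely what the paper's induction on $d(i,j)$ does, the base case being your cover classification and the inductive step using the additivity you flag. One small point: your type-(ii) cover (a unit of $n_{\beta_0}$ passing into the $n$-component) is not literally covered by the paper's second bullet, which takes $n'=n$; however, the convention of \Cref{not.indx}, which regards $n$ as $n_{-1}$, subsumes this into your type (i), so the classifications coincide once that convention is applied to the proof of \Cref{pr.form}.
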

\begin{proof}
  We argue by induction on $d(i,j)$.  If {$d(i,j)=1$}, then there are two possibilities:
  \begin{itemize}
  \item $n_\beta=n'_\beta$ for all cardinals $\beta$ and
        {$n'-n = m'-m = 1$}
  \end{itemize}
      or
  \begin{itemize}
  \item {$m'=m$, $n'=n$, $n'_{\beta}\neq n_{\beta}$ for precisely two cardinals $\beta$ of the form $\beta_s$, $\beta_{s+1}=\beta_s^+$, and $
        n'_{\beta_s}-n_{\beta_s} = n_{\beta_{s+1}}-n'_{\beta_{s+1}} = 1$.}
  \end{itemize}
    In both cases formula (\ref{eq.defect}) holds.  For the induction step, consider a maximal chain
\begin{equation*}
  i=i_0\prec i_1\cdots\prec i_{u}=j
\end{equation*}
for $u=d(i,j)$.  Then $d\left(i_{u-1},i_u\right)=1$, and using the above observation and the induction assumption, one immediately checks formula (\ref{eq.defect}) for the pair $(i,j)$.
\end{proof}

\begin{corollary}\label{cor.defecttriangle}
\begin{enumerate}[(a)]
\item If $d(i,j)<\infty$, the length of any finite chain in $I$ linking $i$ and $j$ equals $d(i,j)$.
\item Under the assumption of (a), we have
  \begin{equation*}
    d(i,k) = d(i,j)+d(j,k)
  \end{equation*}
\end{enumerate}
\end{corollary}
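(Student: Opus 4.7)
Both parts follow directly from the explicit formula \eqref{eq.defect} established in Proposition~\ref{pr.form}.

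The plan is as follows. Given $i \preceq j \preceq k$ in $I$ with all three pairwise defects finite, one first observes that the cardinals appearing with nonzero multiplicity in $i,\, j$, and $k$ jointly fit into a common finite interval-closed set $\{\gamma_0 < \cdots < \gamma_r\}$. Indeed, were this not the case, one could insert arbitrarily many intermediate elements at progressively larger intermediate cardinals along some chain, forcing at least one of the three defects to be infinite. This common indexing simultaneously serves as the interval-closed set attached by Proposition~\ref{pr.form} to each of the pairs $(i,j),\, (j,k),\, (i,k)$. Defining the integer potential
\[
\psi(a) \;=\; n_a \;-\; \sum_{s=0}^{r} s\, n_{a, \gamma_s}
\]
on elements $a=(n_{a,\beta}, n_a, m_a)$ in the relevant block, the formula \eqref{eq.defect} reads $d(a,b) = \psi(b) - \psi(a)$ for any of the relevant pairs.

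Part (b) follows at once from the telescoping identity
\[
d(i,j) + d(j,k) \;=\; (\psi(j) - \psi(i)) + (\psi(k) - \psi(j)) \;=\; \psi(k) - \psi(i) \;=\; d(i,k).
\]
For part (a), consider a saturated finite chain $i = i_0 \prec i_1 \prec \cdots \prec i_u = j$; by the base-case cover analysis in the proof of Proposition~\ref{pr.form}, each step is a cover with $d(i_{s-1}, i_s) = 1$. Iterating part (b) along the chain yields
\[
d(i,j) \;=\; \sum_{s=1}^{u} d(i_{s-1}, i_s) \;=\; u,
\]
so the value $d(i,j)$ is achieved as the common length of any saturated finite chain linking $i$ to $j$.

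The principal (though minor) technical point is the common-interval-closure observation in the setup, which essentially amounts to showing that finiteness of defect prevents the cardinal data of $i,\, j,\, k$ from being scattered across disjoint successor blocks. Once this is dispensed with, both parts reduce to the fact that the formula in Proposition~\ref{pr.form} is expressible as the difference of an integer-valued potential at its two arguments.
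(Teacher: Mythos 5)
Your proof is correct in substance and rests on the same basic tool as the paper's — the explicit defect formula of Proposition~\ref{pr.form} — but it reverses the logical dependence between the two parts. The paper reads part~(a) directly off the \emph{proof} of Proposition~\ref{pr.form} (the induction there already shows that every saturated chain computes the same quantity) and then obtains part~(b) from~(a) by concatenating saturated chains $i\to j$ and $j\to k$ into a saturated chain $i\to k$. You instead repackage the \emph{statement} of Proposition~\ref{pr.form} as $d(a,b)=\psi(b)-\psi(a)$ for an integer-valued potential $\psi$, so that~(b) is immediate by telescoping, and recover~(a) by applying~(b) along the steps of a saturated chain (each step a cover, hence of defect one). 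The potential framing is a clean reformulation that makes the graded-poset structure of \Cref{re.rank} transparent and has the advantage of relying only on the statement of Proposition~\ref{pr.form} rather than on its inner workings; the paper's concatenation argument, conversely, avoids having to compare formulas for different pairs.

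That said, there is one step in your argument that needs more care. Proposition~\ref{pr.form} attaches to each pair $(i,j)$ its own minimal interval-closed set, and the coefficient $s$ appearing in $\sum_s s(n_s-n'_s)$ is the position index of $\beta_s$ \emph{within that set}. Your $\psi$ is defined with respect to a single common set for $i,j,k$, which may strictly contain the minimal set attached to any one pair; inserting extra cardinals below a $\beta_s$ with $n_{\beta_s}\ne n'_{\beta_s}$ shifts the coefficient $s$, and it is not automatic that the numerical value of the formula is preserved under such an enlargement. The justification you give — that scattering the cardinals would force an infinite defect — addresses only the \emph{finiteness} of the common set (which is automatic anyway, since the interval-closure of a finite set of cardinals is finite) and does not touch the index-shifting issue. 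This invariance is the real content behind your claim that one common $\psi$ serves all three pairs, and it ought to be argued explicitly (for instance by observing that every cardinal contributing a nonzero term in the formula for a pair with finite defect must be linked by a finite successor chain to the bottom of the relevant index range, so the positions that matter are determined intrinsically). With that remark added, the proof is complete.
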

\begin{proof}
  Part (a) emerged over the course of the proof of \Cref{pr.form},
  while part (b) is a consequence of (a): if all maximal chains have
  the same length, then that common length must be equal to the length
  of a chain $i\to k$ obtained by concatenating chains $i\to j$ and
  $j\to k$.
\end{proof}

\begin{remark}\label{re.rank}  
  In the language of \cite[Section 3.1]{ec1}, intervals $[i,j]$ for which
  $d(i,j)<\infty$ are {\it graded} posets (some authors refer to
  such posets as {\it ranked}).
\end{remark}

\begin{notation}\label{not.def-obj}
  For simple objects $U\in \cS_i$ and $W\in \cS_j$ with $i\preceq j$
  we sometimes write $d(U,W)$ for
  $d(i,j)$.
\end{notation}


We will use the following observations in the proof of \Cref{th.exts}
{below}.

\begin{corollary}\label{re.defect-ex}
  Suppose $U\in \mathcal{S}_i$ and $W \in \mathcal{S}_j$ are simple objects {of $\bT_\alpha$ such that}
  $d(i,j)=p$.Then for every infinite cardinal $\gamma$ and any simple
  objects
  \begin{equation*}
    \widetilde{U}\subset \soc((V^*/V_\gamma^*)\otimes U),\ \widetilde{W}\subset \soc((V^*/V_\gamma^*)\otimes W)
  \end{equation*}
  we have $d(\widetilde{U},\widetilde{W})=p$. Similarly, for a simple object
  \begin{equation*}
    \widetilde{\widetilde{U}}\subset \soc((V^*/V_{\gamma^+}^*)\otimes U)
  \end{equation*}
  we have $d\left(\widetilde{\widetilde{U}},\widetilde{W}\right)=p+1$.
\end{corollary}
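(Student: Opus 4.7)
The plan is to identify the poset-indices of $\widetilde{U}$, $\widetilde{W}$, and $\widetilde{\widetilde{U}}$ in $I$, and then apply the defect formula of \Cref{pr.form}. Let $i=(n_\beta,n,m)$ be the index of $U\in\cS_i$ and $j=(n'_\beta,n',m')$ that of $W\in\cS_j$; for an infinite cardinal $\delta$, write $i+e_\delta$ for the tuple obtained from $i$ by replacing $n_\delta$ with $n_\delta+1$.

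First I would argue that every simple subobject of $(V^*/V^*_\gamma)\otimes U$ lies in $\cS_{i+e_\gamma}$. Indeed, $U$ is the simple socle of an indecomposable direct summand of $X_i$ (by \Cref{th.indec_inj-gen}), hence in particular $U\subseteq X_i$; permuting tensor factors then gives
$$(V^*/V^*_\gamma)\otimes U\subseteq (V^*/V^*_\gamma)\otimes X_i=X_{i+e_\gamma},$$
and by the argument in the proof of \Cref{pr.isordered}(ii) every simple in $\soc X_{i+e_\gamma}$ belongs to $\cS_{i+e_\gamma}$. Hence $\widetilde{U}\in\cS_{i+e_\gamma}$, and analogously $\widetilde{W}\in\cS_{j+e_\gamma}$ and $\widetilde{\widetilde{U}}\in\cS_{i+e_{\gamma^+}}$. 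The ordering inequalities $i+e_\gamma\preceq j+e_\gamma$ and $i+e_{\gamma^+}\preceq j+e_\gamma$ are checked by inspecting conditions (a)--(c) in the definition of $\preceq$, with a short case analysis on the largest cardinal at which the indices in question differ (above $\gamma^+$, equal to $\gamma^+$, or below).

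Finally, invoke \Cref{pr.form} using an interval-closed sequence $\{\beta_0<\cdots<\beta_q\}$ containing every cardinal occurring in the supports of $i, j, i+e_\gamma, i+e_{\gamma^+}$, and $j+e_\gamma$. By the interval-closure condition, $\gamma$ and $\gamma^+$ are consecutive in this sequence; writing $s_\gamma$ for the position of $\gamma$ (so that $\gamma^+$ has position $s_\gamma+1$), a direct comparison with \Cref{eq.defect} gives
$$d(i+e_\gamma,j+e_\gamma)-d(i,j)=s_\gamma\bigl((n_\gamma+1)-(n'_\gamma+1)\bigr)-s_\gamma(n_\gamma-n'_\gamma)=0$$
and
$$d(i+e_{\gamma^+},j+e_\gamma)-d(i,j)=-s_\gamma+(s_\gamma+1)=1,$$
yielding $d(\widetilde{U},\widetilde{W})=p$ and $d(\widetilde{\widetilde{U}},\widetilde{W})=p+1$ respectively. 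The crux of the argument is the identification of socle-simples in Step~1, which depends crucially on the ordered Grothendieck structure established in \Cref{pr.isordered}; once that is in hand, the defect computation is purely combinatorial.
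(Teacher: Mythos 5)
Your proof is correct and takes essentially the same route as the paper's: identify the poset-indices of $\widetilde U$, $\widetilde W$, $\widetilde{\widetilde U}$ as $i+e_\gamma$, $j+e_\gamma$, $i+e_{\gamma^+}$ (the paper's $\tilde i$, $\tilde j$, $\tilde{\tilde i}$) and then invoke the defect formula of Proposition~\ref{pr.form}. You fill in two details that the paper compresses into ``both claims follow from immediate application of Proposition~\ref{pr.form}'' — namely the containment $(V^*/V^*_\gamma)\otimes U\subseteq X_{i+e_\gamma}$ placing the socle-simples in $\cS_{i+e_\gamma}$, and the explicit difference computation in which the position $s_\gamma$ cancels (first case) or contributes a net $+1$ (second case).
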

\begin{proof}
  Let $i=\left(n_\beta,n,m\right)$, $j=\left(n'_\beta,n',m'\right)$.
  Then $\widetilde{U}\in \mathcal{S}_{\tilde{i}}$, for
  $\tilde{i}=\left(\tilde{n}_\beta,n,m\right)$, where
  $\tilde{n}_\beta=n_\beta$ for $\beta\neq \gamma$, and
  $\tilde{n}_\gamma=n_\gamma+1$.  Similarly,
  $\widetilde{W}\in \mathcal{S}_{\tilde{j}}$ for
  $\tilde{j}=\left(\tilde{n}'_\beta,n',m'\right)$, where
  $\tilde{n}'_\beta=n'_\beta$ for $\beta\neq \gamma$, and
  $\tilde{n}'_\gamma=n'_\gamma+1$.  Finally,
  $\widetilde{\widetilde{U}}\in \mathcal{S}_{\tilde{\tilde{i}}}$, for
  $\tilde{\tilde{i}}=\left(\tilde{\tilde{n}}_\beta,n,m\right)$, where
  $\tilde{\tilde{n}}_\beta=n_\beta$ for $\beta\neq \gamma^+$,
  and $\tilde{\tilde{n}}_{\gamma^+}=n_{\gamma^+}+1$.
  Therefore both claims follow from immediate application of Proposition \ref{pr.form}.
%
%
%
\end{proof}


%

The first main result of the present subsection is

\begin{theorem}\label{th.exts}
  Let $T'\in \cS_i$ and $T\in \cS_j$ be two simple objects in
  $\bT_\alpha$, and suppose $\ext^p(T',T)\ne 0$ for some $p\ge
  0$. Then $d(i,j)=p$.
\end{theorem}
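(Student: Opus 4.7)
The plan is to start from Proposition \ref{pr.up-tri}, which already gives the inequality $d(i,j) \geq p$, and establish the reverse bound $d(i,j) \leq p$ by induction on $p$. The base case $p=0$ is immediate: $\Hom(T',T) \neq 0$ forces $T' \cong T$ by simplicity and the endomorphism algebra computation in Proposition \ref{pr.simples}, hence $i=j$ and $d(i,j)=0$. Note that in view of Corollary \ref{cor.defecttriangle}(a), showing $d(i,j) \leq p$ together with $d(i,j) \geq p$ yields exactly $d(i,j)=p$.

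For the inductive step, I would invoke Theorem \ref{th.indec_inj-gen} to know that $\tilde T$ is injective in $\bar{\bT}_\alpha$, and deduce from the short exact sequence $0 \to T \to \tilde T \to \cor T \to 0$ the identification
\[
\ext^p(T',T) \cong \ext^{p-1}(T',\cor T).
\]
The first crucial step is to show that every simple subquotient $S \in \cS_k$ of $\soc(\cor T) = \usoc^2(\tilde T)$ satisfies $d(k,j)=1$, and more generally that simple subquotients of $\tilde T$ sitting in the $q$-th layer of the socle filtration live in $\cS_k$ with $d(k,j)=q-1$. This should be extracted from the explicit tensor description of $\tilde T = \bigotimes_{s=t}^0 (V^*/V^*_{\beta_s})_{\lambda_s} \otimes (V^*)_\mu \otimes V_\nu$, by reducing one tensorand at a time (either replacing a quotient $V^*/V^*_{\beta_s}$ by its appropriate socle layer, or contracting one trace from the tail $(V^*)_\mu \otimes V_\nu$), each such reduction shifting the poset index by exactly one step according to Corollary \ref{re.defect-ex}.

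Next, a devissage along the transfinite socle filtration of $\cor T$ should produce a simple subquotient $S \in \cS_k$ of $\cor T$ for which $\ext^{p-1}(T',S) \neq 0$. By the inductive hypothesis we then have $d(i,k)=p-1$; combined with $d(k,j)=1$ (from the structural claim of the previous paragraph, since such an $S$ must appear already in $\soc(\cor T)$ once one propagates the Ext nonvanishing down the socle filtration) and with the additivity $d(i,j)=d(i,k)+d(k,j)$ from Corollary \ref{cor.defecttriangle}(b), one concludes $d(i,j)=p$.

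The principal obstacle I anticipate is the devissage step: $\cor T$ may have transfinite socle filtration, and one must ensure that the Ext nonvanishing can be ``pushed down'' to a genuine simple subquotient in $\soc(\cor T)$ rather than be diffused through higher layers. The clean way to handle this is to use the Koszul-type structural claim that the $q$-th socle layer of $\tilde T$ consists of simples at defect exactly $q-1$ from $j$: together with Proposition \ref{pr.up-tri} (applied to the pair $(T',S)$ for $S$ in a high layer), one can show that $\ext^{p-1}(T',\bullet)$ vanishes on quotients of $\cor T$ modulo sufficiently many socle layers, so the contribution must come from a bounded portion of the filtration and ultimately from $\soc(\cor T)$ itself. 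Verifying this structural claim rigorously, via the ordered Grothendieck framework of Proposition \ref{pr.isordered} and the explicit socle computations of Lemma \ref{le.ess}, is where the real technical work lies.
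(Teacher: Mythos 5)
Your proposal correctly reproduces the easy ingredients (Proposition \ref{pr.up-tri} giving $d(i,j)\ge p$, the identification $\ext^p(T',T)\cong\ext^{p-1}(T',\cor T)$, the base case $p=0$), but the inductive step has a genuine gap that cannot be repaired along the route you sketch, and the paper's actual proof is structured quite differently precisely to avoid this difficulty.

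The gap is in the devissage: you want to conclude from $\ext^{p-1}(T',\cor T)\ne 0$ that some simple $S\in\cS_k$ in $\soc(\cor T)$ (so $d(k,j)=1$) has $\ext^{p-1}(T',S)\ne 0$. Even granting the structural claim that $\usoc^q\widetilde T$ lives at defect $q-1$ from $j$, devissage along the socle filtration of $\cor T$ only produces \emph{some} simple $S\in\cS_k$ in \emph{some} layer with $\ext^{p-1}(T',S)\ne 0$. By the inductive hypothesis this gives $d(i,k)=p-1$, hence $d(i,j)=(p-1)+d(k,j)$, and since $d(k,j)\ge 1$ you recover $d(i,j)\ge p$ — but you need $d(i,j)\le p$, i.e. $d(k,j)=1$, and nothing forces the devissage to land in layer one. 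Your proposed fix (``show $\ext^{p-1}(T',\bullet)$ vanishes on quotients of $\cor T$ modulo sufficiently many socle layers'') would require ruling out $\ext^{p-1}(T',S)\ne 0$ for $S$ in deeper layers; but such an $S$ would give $d(i,j)\ge p+1$, which contradicts \emph{nothing} at this stage, since $d(i,j)\le p$ is exactly what you are trying to establish. You are, in effect, assuming the conclusion. In addition, the structural claim itself (Lemma \ref{lem:blah} in the paper) is proved \emph{from} Theorem \ref{th.exts}, so invoking it creates a circularity; and for general $\alpha$ the socle filtration of $\cor T$ is transfinite, with limit layers lying at infinite defect from $j$, so the devissage must also be shown to terminate, which your outline does not address.

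The paper sidesteps all of this. It does a \emph{bivariate} induction, on $p$ and on the complexity $N_j=\sum_\beta n_{\beta,j}$, and instead of applying the long exact sequence to $0\to T\to\widetilde T\to\cor T\to 0$ it writes $T$ as a summand of $\soc\bigl((V^*/V^*_\beta)\otimes U\bigr)$ for a simple $U$ of strictly smaller complexity and applies the long exact sequence to
\[
0\to (V^*_{\beta^+}/V^*_\beta)\otimes U\to (V^*/V^*_\beta)\otimes U\to (V^*/V^*_{\beta^+})\otimes U\to 0.
\]
The two resulting nonvanishing options feed into the two legs of the induction: one reduces $p$ (and uses the ``successor'' part of Corollary \ref{re.defect-ex}), the other keeps $p$ but reduces $N_j$ — here the key trick is that tensoring a minimal injective resolution of $U$ with the injective $V^*/V^*_\beta$ gives an injective resolution of $(V^*/V^*_\beta)\otimes U$, and the inductive hypothesis applied to $U$ pins down $\soc J^p$. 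This cleanly pins the defect to $p$ without ever having to decide which socle layer of $\cor T$ carries the Ext. If you want to stay with a single variable, you would need to prove directly (and without circularity) that $\soc J^p$ in a minimal injective resolution of $T$ consists \emph{only} of simples at defect exactly $p$; that statement is essentially equivalent to the theorem and is no easier than the bivariate induction.
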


Before beginning the proof, we introduce

\begin{notation}\label{not.ni}
  For $i=(n_\beta,n,m)\in I$ as in \Cref{subse.order} we denote
  \begin{equation*}
    {m_i = m}, n_i=n,\quad n_{\beta,i} = n_\beta. 
  \end{equation*}
  when we wish to extract the components $n_\beta$, $n$, and $m$ from $i$.
\end{notation}

\begin{proof-of-exts}
  We already know from \Cref{pr.up-tri} that $\ext^p(T',T)\neq 0$ implies $i {\preceq} j$ and
  $d(i,j)\ge p$; the inequality $i\preceq j$ will be implicit throughout the proof.

  We do simultaneous induction on $p$ and the nonnegative integer
\begin{equation}\label{eq:ind-number}
  N_j:=\sum_{\beta}n_{\beta,j}
\end{equation}
(see \Cref{not.ni}). For $p=0$ the statement is trivial as
there are no nonzero morphisms between distinct irreducible
objects. Similarly, the case $N_j=0$ is also immediate, since $T$ will
then be both simple and injective by \Cref{th.indec_inj-gen}.

We now address the induction step, assuming that both $p$ and $N_j$
are strictly positive. In that case, we can find some cardinal
$\beta$, which is infinite or equals zero, such that $T$ is a direct
summand of the socle of $(V^*/V_\beta^*)\otimes U$ for some simple
object $U$.   By \Cref{le.ess} the socle of $(V^*/V_\beta^*)\otimes U$ is
$(V^*_{\beta^+}/V^*_\beta)\otimes U$, and hence $T$ is a direct summand of
the latter.  Consider the short
exact sequence
\begin{equation}\label{eq:seq}
  0\to (V^*_{\beta^+}/V^*_\beta)\otimes U \to (V^*/V_\beta^*)\otimes U\to (V^*/V_{\beta^+}^*)\otimes U\to 0,
\end{equation}
in which we set $V_\beta^*=0$, $V_{\beta^+}^*=V_*$ for $\beta=0$.

The assumption $\ext^p(T',T)\ne 0$ implies
\begin{equation*}
  \ext^p(T',(V^*_{\beta^+}/V^*_\beta)\otimes U)\ne 0,
\end{equation*}
and via the long exact sequence for Ext-groups, this entails
\begin{equation}\label{eq:p}
  \ext^p(T',(V^*/V_\beta^*)\otimes U)\ne 0
\end{equation}
or 
\begin{equation}\label{eq:p-1}
  \ext^{p-1}(T',(V^*/V_{\beta^+}^*)\otimes U)\ne 0.
\end{equation}

Consider first the case (\ref{eq:p}).  
\Cref{th.indec_inj-gen} makes it clear that tensor products of
injective objects with finite-length socle are again injective, and
hence tensoring an injective resolution of $U$
\begin{equation*}
  0\to U\to J^0\to J^1\to\cdots
\end{equation*}
with $V^*/V^*_\beta$ produces an injective resolution $(V^*/V_\beta^*)\otimes J^\bullet$ of $(V^*/V^*_\beta)\otimes U$.  Since $U\in\mathcal{S}_{j'}$ with $N_{j'}=N_j-1$, the induction hypothesis ensures that the socle of $J^p$ consists of
simple objects $W$ in various $\cS_t$ with $d(W,U)=p$.  Therefore, by Corollary \ref{re.defect-ex}, $d\left(W',T\right)=p$ for all simple objects $W'$ in $\soc (V^*/V^*_\beta \otimes J^p)$, and the claim follows.

In the case of (\ref{eq:p-1}), the proof is very similar an uses the second statement of Corollary \ref{re.defect-ex}. 
\end{proof-of-exts}

We next use \Cref{th.exts} to show that the Grothendieck category $\overline{\bT}_\alpha$ is
Koszul according to the following
definition. 

\begin{definition}\label{def.kosz-cat}
  Let $\cC$ be a semi-artinian Grothendieck category such that the
  class of isomorphism classes of simple objects is a set. We denote
  by $E\cC$ the {\it Ext-algebra} of $\cC$, by definition equal to
  \begin{equation*}
    \bigoplus \ext^p(T',T),
  \end{equation*}
  made into an algebra via the Yoneda product; the summation ranges over the isomorphism classes of simple objects $T$, $T'$, and integers $p\geq 0$. Note that $E\cC$ is
  graded by placing $\ext^p$-groups in degree $p$.

  The category $\cC$ is said to be {\it Koszul} if $E\cC$ is generated in degree
  one.
\end{definition}

We can now state

\begin{theorem}\label{th.kosz-gen}
  The category $\overline{\bT}_\alpha$ is Koszul for any infinite cardinal $\alpha$.
\end{theorem}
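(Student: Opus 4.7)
The plan is to leverage Theorem \ref{th.exts} --- which establishes that $\ext^p(T', T) \neq 0$ forces $d(T', T) = p$ --- to deduce that the minimal injective coresolution of every simple object is \emph{linear}, and then derive Koszulity by induction on $p$. Concretely, fix a simple object $T \in \cS_j$ and a minimal injective coresolution
$$0 \to T \to J^0 \to J^1 \to J^2 \to \cdots$$
in $\overline{\bT}_\alpha$; this exists since $\overline{\bT}_\alpha$ is a Grothendieck category whose indecomposable injectives are classified by Theorem \ref{th.indec_inj-gen}. By minimality, the socle of $J^p$ is a direct sum of simples $T'$, each occurring with multiplicity $\dim_\bK \ext^p(T', T)$, so Theorem \ref{th.exts} guarantees that only simples $T'$ with $d(T', T) = p$ contribute, which is precisely the linearity property.

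Given this linearity, I argue the Koszul property by induction on $p$. For $p \geq 2$ and a nonzero class $\eta \in \ext^p(T', T)$ realized as an embedding $T' \hookrightarrow \soc(J^p)$, lift via the differential $J^{p-1} \to J^p$ to locate a simple $T'' \subseteq \soc(J^{p-1})$ whose corona $\cor(T'')$ has $T'$ as a simple summand of its socle. Linearity of the resolution at position $p-1$ gives $d(T'', T) = p-1$, and Corollary \ref{cor.defecttriangle}(b) then forces $d(T', T'') = 1$. The embedding $T' \hookrightarrow \soc(\cor T'')$ thus yields a class $\xi \in \ext^1(T', T'')$, and by the Yoneda interpretation of Ext-groups the original class factors as $\eta = \xi \cup \eta'$ with $\eta' \in \ext^{p-1}(T'', T)$. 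The inductive hypothesis writes $\eta'$ as a Yoneda product of $p-1$ classes in $\ext^1$, and hence $\eta$ is a Yoneda product of $p$ such classes, as required.

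The main obstacle is justifying the existence of the intermediate simple $T''$ with the prescribed defects from both $T$ and $T'$, together with ensuring that the factorization $\eta = \xi \cup \eta'$ through $T''$ is compatible with the minimal resolution. The existence is a combinatorial claim: in the poset $(I, \preceq)$, every relation $i \prec j$ with $d(i, j) = p \geq 2$ admits a covering refinement $i \prec k \prec j$ with $d(i,k) = 1$ and $d(k,j) = p-1$, which follows from the explicit formula in Proposition \ref{pr.form} together with the description of covers implicit in Section \ref{subse.order} (one sort of cover increments $n$ and $m$ simultaneously, the other shifts an index between adjacent cardinal slots $\beta_s$ and $\beta_{s+1}$). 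The compatibility of the factorization with the Yoneda product is then a standard homological bookkeeping exercise, analogous to the corresponding step in the proof of Koszulity of $\bT^3_{\fgl^M(V_*, V)}$ carried out in \cite{us}.
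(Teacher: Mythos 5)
Your high-level strategy---read linearity of the minimal injective coresolution of $T$ off Theorem \ref{th.exts}, then factor a degree-$p$ class as (degree $1$)$\cdot$(degree $p-1$) at the step $J^{p-1}\to J^p$---is the right kind of argument and is a ``mirror image'' of the paper's: the paper instead pulls the degree-$1$ factor off at the $T$ end, via the long exact sequence for $0\to T\to\widetilde T\to\cor T\to 0$ and the identification of the connecting map $\ext^{p-1}(T',U)\to\ext^p(T',T)$, $U\subseteq\soc(\cor T)$, with Yoneda multiplication by the class of $0\to T\to P_U\to U\to 0$. Both routes are legitimate.

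Where your write-up has a genuine gap is in the step you yourself flag as the ``main obstacle.'' You assert that one can ``lift via the differential $J^{p-1}\to J^p$ to locate a simple $T''\subseteq\soc J^{p-1}$ whose corona has $T'$ in its socle,'' and then resolve the concern by invoking a purely combinatorial fact about covering refinements $i\prec k\prec j$ in $(I,\preceq)$. That poset statement does not do the job: the existence of an intermediate index $k$ says nothing about whether some simple of $\cS_k$ actually occurs in $\soc J^{p-1}$ with $\ext^1(T',T'')\ne 0$ and $\ext^{p-1}(T'',T)\ne 0$, and even less about whether your particular class $\eta$ factors through it. Note also that a literal ``lift'' of $T'\subseteq\soc J^p$ to a subobject of $J^{p-1}$ cannot exist: after dimension shifting, $\eta$ is exactly the class of the nonsplit extension $0\to K^{p-1}\to\pi^{-1}(T')\to T'\to 0$, where $K^{p-1}=\ker d^{p-1}$, and nonvanishing of $\eta$ is precisely the obstruction to splitting. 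What is actually needed at this point is a homological vanishing statement, not a poset statement: using Theorem \ref{th.exts} together with Remark \ref{re.def} or Corollary \ref{cor.defecttriangle}, one shows $\ext^1\big(T',K^{p-1}/\soc J^{p-1}\big)=0$ (every simple subquotient $W$ of $K^{p-1}/\soc J^{p-1}$ has $d(W,T)\ge p$, forcing $d(T',W)\ne 1$ by additivity of defects), and therefore the surjection $\ext^1(T',\soc J^{p-1})\twoheadrightarrow\ext^1(T',K^{p-1})\cong\ext^p(T',T)$ exhibits $\eta$ as a sum of terms each factoring through a simple $T''\subseteq\soc J^{p-1}$. That vanishing argument is the actual engine of the proof, and it is exactly what the paper carries out (at the opposite end, showing $\ext^{p-1}(T',\cor T/\soc\cor T)=0$). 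As written, your proposal omits it and substitutes a poset fact that is too weak; once the vanishing argument is inserted, the rest of your bookkeeping goes through.
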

\begin{proof}
  We have to prove that for each nonnegative integer $p$ the
  degree-$p$ component of $E\overline{\bT}_\alpha$ is contained in the
  span of Yoneda products of elements in the degree-$1$ and
  degree-$(p-1)$ components.
  
  Let $T'$ and $T$ be simple objects in $\bT_\alpha$, with $d(T',T)=p$
  (see \Cref{not.def-obj}), and let $\widetilde{T}$ be an injective
  hull of $T$.  Consider a nonzero element $x\in \ext^p(T',T)$. It is obtained,
  via the long exact sequence of Exts corresponding to the exact sequence
  \begin{equation*}
    0\to T\to \widetilde{T} \to \cor T\to 0,
  \end{equation*}
  determined by a nonzero element of $\ext^{p-1}(T',\cor T)$. In turn,
  \Cref{th.exts} implies that this is actually a nonzero element of
  \begin{equation}\label{eq:s'u}
    \ext^{p-1}\big(T',\soc(\cor T)\big)\cong \bigoplus \ext^{p-1}(T',U)\,,
  \end{equation}
  where the summation ranges over the isomorphism classes of the simple submodules $U$ of $\cor T$.
  Finally, this means that $x$ is in the span of the Yoneda product of the space
  \Cref{eq:s'u} and $\bigoplus \ext^1(U,T)$.   These spaces are subspaces of
  $(E\overline{\bT}_\alpha)_{p-1}$ and $(E\overline{\bT}_\alpha)_1$
  respectively.
\end{proof}

\subsection{The case $\alpha=\aleph_t$ for $t\in\mathbb{Z}_{\geq 0}$}\label{subse.fin}

In this section, $\alpha=\aleph_t$ for $t\in\bZ_{\geq 0}$.

\begin{proposition}\label{prop.finordcat}
Under the above assumption, the objects $X_i$ have finite length, and thus $\bar{\bT}_{\alpha}$ is a finite ordered Grothendieck category.
\end{proposition}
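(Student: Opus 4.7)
The key observation is that for $\alpha=\aleph_t$ with $t<\infty$ the set of infinite cardinals $\leq\alpha^+=\aleph_{t+1}$ is the finite collection $\{\aleph_0,\aleph_1,\ldots,\aleph_{t+1}\}$, so that the transfinite filtration \Cref{eq.transfilt} of $V^*$ is in fact a finite filtration of length $t+2$. Its successive quotients $L_0:=V_*$ and $L_s:=V^*_{\aleph_s}/V^*_{\aleph_{s-1}}$ for $1\leq s\leq t+1$ are simple in $\bar{\bT}_\alpha$: for $L_0=V_*=V_{(1),\emptyset}$ this is the $t=0$ case of \Cref{pr.simples}, while for $s\geq 1$ it follows from \Cref{le.lambdas} together with the fact that any $\fgl^M$-submodule of $L_s$ is automatically an $\fgl^M_{\aleph_s}/\fgl^M_{\aleph_{s-1}}$-submodule, since $\fgl^M_{\aleph_{s-1}}$ annihilates $L_s$. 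In particular both $V^*$ and every quotient $V^*/V^*_\beta$ have finite length.

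Next, since the surjections $V^*\twoheadrightarrow V^*/V^*_\beta$ tensor together into a surjection
\[
(V^*)^{\otimes N}\otimes V^{\otimes m}\twoheadrightarrow X_i,\qquad N:=\sum_\beta n_\beta+n,
\]
it suffices to prove that the left-hand side has finite length. Iterating the finite filtration of $V^*$ across the $N$ tensor factors produces a finite filtration of $(V^*)^{\otimes N}\otimes V^{\otimes m}$ whose successive quotients are tensor products of the form $L_{s_1}\otimes\cdots\otimes L_{s_N}\otimes V^{\otimes m}$. Grouping factors with equal $s$-indices and then applying the Schur--Weyl decomposition to each $L_s^{\otimes N_s}$ and to $V^{\otimes m}$ (each of which produces only finitely many summands, there being only finitely many Young diagrams of any fixed size), each such quotient becomes a finite direct sum of objects of the form $\bigotimes_{s=0}^{t+1}(L_s)_{\mu_s}\otimes V_\nu$.

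It thus remains to check that each such summand has finite length. The tensor factor $(L_0)_{\mu_0}\otimes V_\nu=(V_*)_{\mu_0}\otimes V_\nu$ admits a finite filtration whose simple composition factors are the traceless modules $V_{\mu',\nu'}$, as follows from \cite[Theorem 2.2]{PS} together with the equivalence of categories \cite[Theorem 5.5]{PS2} already used in the proof of \Cref{pr.simples}. Since tensoring with the fixed object $\bigotimes_{s\geq 1}(L_s)_{\mu_s}$ is exact, we obtain a finite filtration of $\bigotimes_{s=0}^{t+1}(L_s)_{\mu_s}\otimes V_\nu$ with layers $\bigotimes_{s\geq 1}(L_s)_{\mu_s}\otimes V_{\mu',\nu'}$, and each of these is simple by \Cref{pr.simples}. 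The main point requiring care is precisely this last identification: one has to match the data $\bigl((\aleph_{s-1},\mu_s)_{1\leq s\leq t+1},\mu',\nu'\bigr)$ against the classification of simples in \Cref{pr.simples}. Once the correspondence is made, it follows that $X_i$ has finite length, and hence $\bar{\bT}_\alpha$ is a finite ordered Grothendieck category in the sense of \Cref{def.finordgroth}.
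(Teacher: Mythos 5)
Your proof is correct and follows essentially the same route as the paper's: both start from the observation that $V^*$ has finite length when $\alpha = \aleph_t$ (the transfinite filtration is actually finite), then tensor this filtration through, apply Schur–Weyl to decompose the layers into Schur functor pieces, handle the factor $(V_*)_\mu\otimes V_\nu$ via \cite[Theorem 2.2]{PS} together with \cite[Theorem 5.5]{PS2}, and finally identify the resulting subquotients as the simple modules of \Cref{pr.simples}. Your preliminary surjection $(V^*)^{\otimes N}\otimes V^{\otimes m}\twoheadrightarrow X_i$ is a clean way to reduce to a pure tensor power of $V^*$ and $V$, and the remark about the $\fgl^M$-module structure of $L_s$ descending through $\fgl^M_{\aleph_s}/\fgl^M_{\aleph_{s-1}}$ is a correct justification for the simplicity of the filtration layers; the paper leaves these details implicit but the substance is the same.
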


\begin{proof}
	Since $V^*$ has finite length, the object $\left(V^*\right)^{\otimes n}$ has a finite filtration whose successive quotients are direct sums of modules of the form $$\left(V^*/V^*_{\aleph_t}\right)^{\otimes q_t}\otimes \cdots \otimes \left(V^*_{\aleph_{1}} / V^*_{\aleph_0}\right)^{\otimes q_0}\otimes \left(V_*\right)^{\otimes q} \otimes V^{\otimes q'}\,.$$	
By \cite{PS2} and \cite{PS}, $\left(V_*\right)^{\otimes q} \otimes V^{\otimes q'}$ has finite length with irreducible subquotients of the form $V_{\mu,\nu}$.  Furthermore, modules of the form $\left(V^*/V^*_{\aleph_t}\right)^{\otimes q_t}\otimes \cdots \otimes \left(V^*_{\aleph_{1}} / V^*_{\aleph_0}\right)^{\otimes q_0}$ have finite filtrations with irreducible subquotients of the form $V_{\lambda_t,\ldots,\lambda_0,\emptyset,\emptyset}$ for Young diagrams $\lambda_t,\ldots,\lambda_0$ (some of which may be empty).   Finally, Proposition \ref{pr.simples} implies that tensor products of the form $V_{\lambda_t,\ldots,\lambda_0,\emptyset,\emptyset}\otimes V_{\mu,\nu}$ are irreducible.  The statement follows.
\end{proof}

	Proposition~\ref{prop.finordcat} implies in particular that the discussion in \cite[Section 2]{us} applies verbatim.

%
%

\begin{corollary}\label{cor.bigone}
  \begin{enumerate}[(a)]
  	\item $\bar{\bT}_{\aleph_t}$ is a
  highest weight category.
  	\item The indecomposable injectives in the category $\bar{\bT}_{\al_t}$ are (up to isomorphism)
  \begin{equation}\label{eq:inj}
   \tilde{V}_{\lambda_t,\ldots,\lambda_0,\mu,\nu}=\bigotimes_{s=t}^0(V^*/V^*_{\al_s})_{\lambda_s} \otimes (V^*)_{\mu}\otimes V_{\nu},
  \end{equation}
with respective socles 
  \begin{equation}\label{eq:respsocle}
    V_{\lambda_t,\cdots,\lambda_0,\mu,\nu}=\bigotimes_{s=t}^0(V^*_{\al_{s+1}}/V^*_{\al_s})_{\lambda_s} \otimes V_{\mu,\nu}
  \end{equation}
   for arbitrary Young diagrams $\lambda_t,\ldots,\lambda_0,\mu,\nu$;
  \item The injective objects {of $\bar{\bT}_{\aleph_t}$} are precisely the direct sums
  of indecomposable injectives of the form (\ref{eq:inj}).
  
  \item The category $\overline{\bT}_{\aleph_t}$ is equivalent to {the category} of
  comodules over a Koszul semiperfect coalgebra $C$. Moreover, any
  such equivalence identifies $\bT_{\aleph_t}$ {with} the category of
  finite-dimensional $C$-comodules. 
  \end{enumerate}
\end{corollary}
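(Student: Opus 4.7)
The plan is to collect parts (a)--(c) as essentially immediate consequences of results already in hand for the general $\alpha$ case, and then devote the main effort to (d).

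For part (a), \Cref{prop.finordcat} says that $\bar{\bT}_{\aleph_t}$ is a finite ordered Grothendieck category, so the poset $I$ relevant here (restricted to indices that actually appear) has only finitely many cardinals $\aleph_0,\ldots,\aleph_t$ in play, and every interval in $I$ is finite. \Cref{pr.hw} therefore applies directly. Part (b) is simply the specialization of \Cref{th.indec_inj-gen} to $\alpha=\aleph_t$: the only infinite cardinals at most $\alpha$ are $\aleph_0,\ldots,\aleph_t$, and allowing some $\lambda_s$ to be empty accounts for the indices $s$ that do not actually occur in the statement of \Cref{th.indec_inj-gen}. For part (c), \Cref{prop.finordcat} tells us that each $X_i$ itself has finite length, so in particular is the union of its finite-length subobjects; \Cref{pr.loc-noe} (or equivalently \Cref{pr.loc-noe-aux}) then yields the claim.

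For part (d), I would proceed as follows. Since every $X_i$ has finite length by \Cref{prop.finordcat} and since the $X_i$ generate $\bar{\bT}_{\aleph_t}$ by part (i) of \Cref{def.ordered}, the category $\bar{\bT}_{\aleph_t}$ is a \emph{locally finite} $\bK$-linear Grothendieck category. By standard coalgebra reconstruction (Takeuchi; see e.g.\ the survey of such Gabriel--Takeuchi-type results), any such category is equivalent to $\text{Comod}(C)$ for a $\bK$-coalgebra $C$, with the subcategory of finite-length objects corresponding under the equivalence to finite-dimensional $C$-comodules. One natural choice of $C$ is the coendomorphism coalgebra of the direct sum of a set of generators. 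To see that $C$ is semiperfect, recall that semiperfectness of $C$ is equivalent to the statement that every indecomposable injective $C$-comodule is finite-dimensional. By part (b), the indecomposable injectives of $\bar{\bT}_{\aleph_t}$ are the modules in \Cref{eq:inj}, each of which is of finite length (and hence finite-dimensional) by the argument in the proof of \Cref{prop.finordcat}. The finite-length part therefore coincides with $\bT_{\aleph_t}$, and $C$ is semiperfect.

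The remaining point is Koszulity of the coalgebra $C$, and this is the step I expect to require the most care. \Cref{th.kosz-gen} tells us that the category-theoretic Ext-algebra $E\bar{\bT}_{\aleph_t}$ is generated in degree one, in the sense of \Cref{def.kosz-cat}. One must check that this coincides with the Koszulity condition for the semiperfect coalgebra $C$ in the sense of the graded-coalgebra literature; concretely, the Ext-algebra in \Cref{def.kosz-cat} agrees with $\mathrm{Ext}^{\bullet}_{C}(S,S)$ where $S$ is the direct sum of a representative of each simple comodule, and Koszulity of $C$ is by definition (as a basic coalgebra associated to a family of simples) exactly that this Yoneda algebra be generated in degree one. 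Once this identification is in place, \Cref{th.kosz-gen} gives Koszulity of $C$ and completes the proof.
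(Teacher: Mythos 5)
Your parts (a)--(c) follow the paper's approach exactly: (a) via \Cref{prop.finordcat} and \Cref{pr.hw} (with the correct observation that the poset is interval-finite in the $\aleph_t$ case), (b) as a special case of \Cref{th.indec_inj-gen}, and (c) from \Cref{pr.loc-noe} since each $X_i$ has finite length. For part (d) you reconstruct the reconstruction-theoretic argument that the paper instead delegates to the external reference \cite[Theorem 2.13]{us}; the outline (Takeuchi reconstruction for locally finite Grothendieck categories, semiperfectness from indecomposable injectives being finite-length, Koszulity from \Cref{th.kosz-gen}) is the right route.

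One genuine slip in (d): you write that each indecomposable injective \Cref{eq:inj} ``is of finite length (and hence finite-dimensional).'' This is false as stated --- $\widetilde{V}_{\lambda_t,\ldots,\lambda_0,\mu,\nu}$ is an infinite-dimensional vector space (it contains, e.g., $V$ or $V^*$ as a tensorand). What you want to say is that these objects are of finite length in $\bar{\bT}_{\aleph_t}$, and that under the reconstruction equivalence $\bar{\bT}_{\aleph_t}\simeq\text{Comod}(C)$ the full subcategory of finite-length objects corresponds to the finite-dimensional $C$-comodules; it is the corresponding comodules that are finite-dimensional, from which semiperfectness of $C$ follows. The confusion is easy to repair, but as written the sentence asserts something untrue about the $\fgl^M$-modules themselves. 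Relatedly, the claim ``the finite-length part therefore coincides with $\bT_{\aleph_t}$'' deserves at least a sentence: any finite-length object of $\bar{\bT}_{\aleph_t}$ has finite socle, so embeds into a finite direct sum of the indecomposable injectives \Cref{eq:inj}, which lies in $\bT_{\aleph_t}$.
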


\begin{proof}
	Part (a) follows from Proposition \Cref{pr.hw}.  Part (b) is a particular instance of Theorem \ref{th.indec_inj-gen}.  For part (c), we note that  \Cref{pr.loc-noe} applies to the
  category $\bar{\bT}_{\aleph_t}$.  Part (d) is a consequence of \Cref{th.kosz-gen} and
\cite[Theorem 2.13]{us} (cf. \cite[Corollary 3.17]{us}).
\end{proof}

Note that the module (\ref{eq:respsocle}) is nothing but the module (\ref{eq:simple}).  When $\alpha=\aleph_t$, the infinite cardinal $\beta_s$ can be read of the position of $\lambda_s$ in the subscript of the left-hand side of (\ref{eq:respsocle}).  Therefore we can simply write $V_{\lambda_t,\ldots,\lambda_0,\mu,\nu}$ instead of $V_{\left(\beta_t,\lambda_t\right),\ldots,\left(\beta_0,\lambda_0\right),\mu,\nu}$.

\begin{remark}\label{re.not-unique}
  The coalgebra $C$ in Corollary \ref{cor.bigone}(d) is not unique, being
  determined only up to Morita equivalence. Nevertheless, for a
  specific choice, see \Cref{re.coalg-choice} below.
\end{remark}

Our next goal is to describe the socle filtration of the injectives (\ref{eq:inj}).  We start with

\begin{lemma}\label{lem:blah}
	Fix $\lambda_t,\ldots,\lambda_0,\mu,\nu$ as in Corollary \ref{cor.bigone}.  Then $V_{\kappa_t,\ldots,\kappa_0,\gamma,\delta}$ is a direct summand of $\usoc^s\widetilde{V}_{\lambda_t,\ldots,\lambda_0,\mu,\nu}$ if and only if $V_{\kappa_t,\ldots,\kappa_0,\gamma,\delta}$ is a constituent of $\tilde{V}_{\lambda_t,\ldots,\lambda_0,\mu,\nu}$ and $d(i,j)=s-1$, where $i=\left(|\kappa_t|,\ldots,|\kappa_0|,|\gamma|,|\delta|\right)$ and $j=\left(|\lambda_t|,\ldots,|\lambda_0|,|\mu|,|\nu|\right)$.
\end{lemma}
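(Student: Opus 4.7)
The plan is to reduce Lemma \ref{lem:blah} to the multiplicity formula
\begin{equation}\label{eq.plan_mult}
[\usoc^s \widetilde{V}_{\lambda_t,\ldots,\lambda_0,\mu,\nu} : V_{\kappa_t,\ldots,\kappa_0,\gamma,\delta}] = \dim \ext^{s-1}\bigl(V_{\kappa_t,\ldots,\kappa_0,\gamma,\delta},\, V_{\lambda_t,\ldots,\lambda_0,\mu,\nu}\bigr),
\end{equation}
after which Theorem \ref{th.exts} converts nonvanishing of the right-hand side into the defect condition $d(i,j) = s-1$.

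To establish \Cref{eq.plan_mult} I would invoke Corollary \ref{cor.bigone}(d): the category $\bar{\bT}_{\aleph_t}$ is equivalent to comodules over a Koszul semiperfect coalgebra $C$. Under this equivalence $\widetilde{V}_{\lambda_t,\ldots,\lambda_0,\mu,\nu}$ is the injective envelope (in $C$-comod) of the simple comodule $V_{\lambda_t,\ldots,\lambda_0,\mu,\nu} = \soc\widetilde{V}_{\lambda_t,\ldots,\lambda_0,\mu,\nu}$. Standard Koszul duality for such coalgebras (as carried out in \cite{us} in the $\aleph_0$ case and applicable verbatim here by virtue of Proposition \ref{prop.finordcat}) identifies the socle filtration of this injective envelope with the filtration induced by the Koszul grading of $C$, and in particular identifies its $s$-th layer with $\bigoplus_S \ext^{s-1}(S, T) \otimes S$ as a semisimple comodule, where $T = V_{\lambda_t,\ldots,\lambda_0,\mu,\nu}$ and $S$ ranges over the isomorphism classes of simples.

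Granting \Cref{eq.plan_mult}, both directions of the lemma are immediate. In the forward direction a summand $V_{\kappa_t,\ldots,\kappa_0,\gamma,\delta}$ of $\usoc^s\widetilde{V}_{\lambda_t,\ldots,\lambda_0,\mu,\nu}$ is automatically a constituent of $\widetilde{V}_{\lambda_t,\ldots,\lambda_0,\mu,\nu}$, and \Cref{eq.plan_mult} combined with Theorem \ref{th.exts} yields $d(i,j) = s-1$. Conversely, if $V_{\kappa_t,\ldots,\kappa_0,\gamma,\delta}$ is a constituent of $\widetilde{V}_{\lambda_t,\ldots,\lambda_0,\mu,\nu}$ then, since the Loewy length is finite by Proposition \ref{prop.finordcat}, any of its composition-series occurrences lies in some layer $\usoc^r$; \Cref{eq.plan_mult} then produces a nonzero $\ext^{r-1}$, and Theorem \ref{th.exts} forces $r-1 = d(i,j) = s-1$, so $r = s$ as required.

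The main obstacle is \Cref{eq.plan_mult} itself. If one wishes to avoid invoking general Koszul-duality machinery, a self-contained inductive argument on $s$ is possible: writing $M = \widetilde{V}_{\lambda_t,\ldots,\lambda_0,\mu,\nu}$ and $T = \soc M$, one applies $\Hom(S,-)$ to the short exact sequence $0 \to \soc^{s-1} M \to M \to M/\soc^{s-1} M \to 0$ and uses the injectivity of $M$ together with Theorem \ref{th.kosz-gen} to identify $\Hom(S, M/\soc^{s-1} M)$ with $\ext^{s-1}(S, T)$ for $S \not\cong T$; taking socles then delivers \Cref{eq.plan_mult}.
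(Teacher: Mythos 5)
Your central claim, \Cref{eq.plan_mult}, is false, and this is not a technicality --- it is explicitly contradicted by the paper itself. Remark~\ref{re.non-sd} points out that for $\alpha = \aleph_t$ with $t\geq 1$, the layer $\usoc^3 V^*$ of the injective hull $V^*$ of $V_*$ is nonzero, while $V^*/V_*$ is already injective, so that all $\ext^2(\bullet, V_*)$ vanish. Concretely, taking $\alpha=\aleph_1$, $T=V_*=V_{\emptyset,\emptyset,(1),\emptyset}$, $S=V^*/V^*_{\aleph_1}=V_{(1),\emptyset,\emptyset,\emptyset}$, and $s=3$, the left-hand side of your formula is $[\usoc^3 V^*:S]=1$ while the right-hand side is $\dim\ext^2(S,T)=0$. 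In other words, the socle layers of the indecomposable injectives are \emph{not} computed by $\ext$-groups in this category, which is precisely what makes the Ext-computation an open problem for $t\geq 1$ as stated in Remark~\ref{re.non-sd}. The sketched ``self-contained'' dimension-shift argument in your last paragraph does not rescue this either: $\soc^{s-1}\widetilde{T}$ is not an iterated injective hull term, so applying $\Hom(S,-)$ to $0\to\soc^{s-1}\widetilde{T}\to\widetilde{T}\to\widetilde{T}/\soc^{s-1}\widetilde{T}\to 0$ only identifies $\Hom(S,\widetilde{T}/\soc^{s-1}\widetilde{T})$ with $\ext^1(S,\soc^{s-1}\widetilde{T})$, and there is no clean way to descend from $\ext^1(S,\soc^{s-1}\widetilde{T})$ to $\ext^{s-1}(S,T)$. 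Koszulity of $\overline{\bT}_{\aleph_t}$ (generation of the Ext-algebra in degree one) does not supply such a descent, and Koszul self-duality, which would, fails for $t\geq 1$.

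The paper's own argument is weaker and correct precisely because it only needs the one-way implication of \Cref{th.exts}: if $S$ is a subobject of $\usoc^2\widetilde{T}$ then the extension $0\to T\to\bullet\to S\to 0$ is essential, hence nonsplit, hence $\ext^1(S,T)\neq 0$, so $d(i,j)=1$. For $s>2$ one instead produces a simple $U$ in $\usoc^{s-1}\widetilde{T}$ with $\ext^1(S,U)\neq 0$ (again via an essential extension inside $\widetilde{T}/\soc^{s-2}\widetilde{T}$), applies the induction hypothesis to $U$, and concludes $d(i,j)=s-1$ by the additivity of the defect (\Cref{cor.defecttriangle}(b)) --- without ever claiming, and indeed without its being true, that $\ext^{s-1}(S,T)\neq 0$. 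The converse direction then follows for free from finiteness of Loewy length (\Cref{prop.finordcat}): if $S$ is a constituent of $\widetilde{T}$ with $d(i,j)=s-1$, it must live in some layer $\usoc^r\widetilde{T}$, and the forward direction forces $r=s$. You should replace the reliance on \Cref{eq.plan_mult} by this argument.
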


\begin{proof}
	Theorem \ref{th.exts} implies that $V_{\kappa_t,\ldots,\kappa_0,\gamma,\delta}$ is a submodule of $\usoc^2 \tilde{V}_{\lambda_t,\ldots,\lambda_0,\mu,\nu}$ if and only if $V_{\kappa_1,\ldots,\kappa_0,\gamma,\delta}$ is a constituent of $\tilde{V}_{\lambda_t,\ldots,\lambda_0,\mu,\nu}$ and $d(i,j)=1$.  The general case follows by a straightforward induction argument.
\end{proof}

We now describe the socle filtration of $\tilde{V}_{\lambda_t,\ldots,\lambda_0,\mu,\nu}$ explicitly.  Our approach is to break down the problem into manageable pieces.


As in \cite[$\S$3.2]{us}, we
use Sweedler notation $\Delta:\lambda\mapsto
\lambda_{(1)}\otimes\lambda_{(2)}$ for the comultiplication in the
Hopf algebra $\sym$ of symmetric functions (\cite{sym}) with the usual
basis consisting of Schur functions indexed by partitions
$\lambda$. This notation propagates to multiple iterations of the
comultiplication, as in 
\begin{equation*}
  (\Delta\otimes\id)\circ\Delta:\lambda\mapsto \lambda_{(1)}\otimes \lambda_{(2)}\otimes\lambda_{(3)},
\end{equation*}
etc. We also reprise the notation for truncating such sums according
to the number of boxes in the partitions indexing the basis
elements. For instance, 
\begin{equation*}
  \lambda_{(1)}^q\otimes\lambda_{(2)}^{|\lambda|-q}
\end{equation*}
denotes the sum of all of those summands in
$\Delta(\lambda)=\lambda_{(1)}\otimes\lambda_{(2)}$ whose left-hand
tensorand corresponds to a partition with $k$ boxes.

Furthermore, in order to shorten the notation, we will sometimes denote the simple object $V^*_{\al_{u+1}}/V^*_{\al_u}$ by $U_u$.

\begin{lemma}\label{le.quot_filt}
  Let $0\leq u\le t$ and let $\lambda$ be a partition. Then $\usoc^q\left((V^*/V^*_{\al_u})_\lambda\right)$
  is isomorphic to
  \begin{equation}\label{eq:directsumU}
    \bigoplus (U_t)_{\lambda_{(t-u+1)}^{\ell_{t-u+1}}}\otimes\cdots\otimes (U_u)_{\lambda_{(1)}^{\ell_1}},
  \end{equation}
where the summation is over all choices of nonnegative integers $\ell_x$ such that
\begin{equation}\label{eq:fats}
  \sum^{1}_{x=t-u+1}\ell_x=|\lambda|
\end{equation} 
and
\begin{equation}\label{eq:fats2}
\sum^{1}_{x=t-u+1}(x-1)\ell_x=q-1.
\end{equation}
\end{lemma}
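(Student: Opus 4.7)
The strategy is to combine \Cref{lem:blah} with an explicit identification of the Jordan--H\"older constituents of $(V^*/V^*_{\aleph_u})_\lambda$ together with a computation of the defect of each constituent relative to the socle $(U_u)_\lambda$.

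First I would determine the composition factors. The module $V^*/V^*_{\aleph_u}$ carries a finite filtration whose successive quotients are $U_u,U_{u+1},\ldots,U_t$. Applying the Schur functor $\bullet_\lambda$ induces a filtration on $(V^*/V^*_{\aleph_u})_\lambda$; by induction on the length of the filtration, together with the standard fact that for a short exact sequence $0\to A\to B\to C\to 0$ the module $B_\lambda$ admits a filtration whose associated graded is $\bigoplus A_{\lambda_{(1)}}\otimes C_{\lambda_{(2)}}$ (the coproduct identity in $\sym$), one finds that the composition factors of $(V^*/V^*_{\aleph_u})_\lambda$ are precisely the modules $(U_u)_{\kappa_u}\otimes\cdots\otimes(U_t)_{\kappa_t}$, each appearing with multiplicity equal to the coefficient of $s_{\kappa_u}\otimes\cdots\otimes s_{\kappa_t}$ in the iterated coproduct $\Delta^{(t-u)}(s_\lambda)$. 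By \Cref{pr.simples} each such tensor product is simple.

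Next I would compute the defect. The socle $(U_u)_\lambda$ has poset index $j$ with a single nonzero entry $n_{\aleph_u}=|\lambda|$, while the constituent $\bigotimes_{x=u}^t (U_x)_{\kappa_x}$ has index $i$ with $n_{\aleph_x}=|\kappa_x|$. A direct application of \Cref{pr.form} gives
\begin{equation*}
  d(i,j)=\sum_{x=u+1}^t (x-u)\,|\kappa_x|.
\end{equation*}
Reindexing by $\ell_y:=|\kappa_{u+y-1}|$ for $y=1,\ldots,t-u+1$, this becomes $\sum_{y=1}^{t-u+1}(y-1)\ell_y$, which matches \Cref{eq:fats2} under $q-1=d(i,j)$; the size constraint $\sum|\kappa_x|=|\lambda|$ reproduces \Cref{eq:fats}.

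Finally, \Cref{lem:blah} identifies a simple constituent at defect $d$ from the socle as a direct summand of $\usoc^{d+1}$ with its full composition-factor multiplicity, and collecting the contributions according to $(\ell_1,\ldots,\ell_{t-u+1})$ assembles the direct sum \Cref{eq:directsumU}. The main obstacle is the first step, since the Schur functor does not commute with the associated-graded construction on the nose; the inductive reduction to two-step filtrations, combined with the fact that Jordan--H\"older multiplicities are invariant under passage to an associated graded, is what makes this precise.
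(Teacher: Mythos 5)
Your proposal is correct and follows essentially the same route as the paper's proof: the constituents of $(V^*/V^*_{\aleph_u})_\lambda$ are identified via the filtration of $W_\lambda$ induced by a short exact sequence $0 \to U \to W \to Y \to 0$ (with associated graded controlled by the coproduct on $\sym$), and the layer in which each simple constituent sits is then pinned down by \Cref{lem:blah} together with the defect formula of \Cref{pr.form}. Your defect computation $d(i,j)=\sum_{x=u+1}^t (x-u)|\kappa_x|$ and the reindexing $\ell_y=|\kappa_{u+y-1}|$ are exactly what is being appealed to implicitly in the paper, so you have simply filled in the arithmetic the paper leaves to the reader.
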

\begin{proof}
  The fact that the simple modules in the sum (\ref{eq:directsumU}) satisfying (\ref{eq:fats}) are constituents of $\left(V^*/V^*_{\aleph_u}\right)_\lambda$ follows from the general remark that given an exact sequence
  \begin{equation*}
    0\to U\to W\to Y\to 0
  \end{equation*}
  of vector spaces, $W_\lambda$ admits a filtration
  \begin{equation*}
    0\subseteq U_\lambda \subseteq U_{\lambda^{|\lambda|-1}_{(1)}}\otimes Y_{\lambda^{1}_{(2)}} \subseteq\cdots\subseteq U_{\lambda^1_{(1)}}\otimes Y_{\lambda^{|\lambda|-1}_{(2)}} \subseteq Y_\lambda. 
  \end{equation*}
	According to Lemma \ref{lem:blah}, the equality (\ref{eq:fats2}) singles out the simple constituents of $\usoc^q\left((V^*/V^*_{\aleph_0})_\lambda\right)$, see Proposition \ref{pr.form}.
%
\end{proof}

We have an analogous result regarding the tensorand $(V^*)_\mu\otimes V_\nu$ in \Cref{eq:inj}. Before stating it, we introduce the notation $\langle\bullet,\bullet\rangle$ for the bilinear form on $\sym$ making the basis $\{\lambda\}$ orthonormal.

\begin{lemma}\label{le.quot_filt'}
  The subquotient  $\usoc^q\left((V^*)_\mu\otimes V_\nu\right)$ is isomorphic to 
  \begin{equation*}
    \bigoplus\left((U_t)_{\mu_{(t+1)}^{\ell_{t+1}}} \otimes\cdots\otimes (U_0)_{\mu_{(1)}^{\ell_1}}\otimes V_{\mu_{(t+2)}^{\ell_{t+2}},\nu_{(1)}^{|\nu|-\tau}}\right)^{\oplus \langle\mu_{(t+3)},\nu_{(2)}\rangle},
  \end{equation*}
with summation over those choices of $\tau$ and $\ell_x$ such that
\begin{equation*}
\tau+\sum^{1}_{x=t+2}\ell_x=|\mu|
\end{equation*}
 and 
 \begin{equation}\label{eq.tausum}
 \tau+\sum^{1}_{x=t+1}x\ell_x=q-1.
\end{equation}
\end{lemma}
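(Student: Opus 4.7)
The strategy is to combine an iterated Schur-filtration of $(V^*)_\mu$ coming from the chain
\begin{equation*}
  0 \subset V^*_{\aleph_0} \subset V^*_{\aleph_1} \subset \cdots \subset V^*_{\aleph_t} \subset V^*
\end{equation*}
with the socle filtration of $(V_*)_\kappa \otimes V_\nu$ from \cite[Theorem 2.2]{PS}, and then to distribute the resulting simple constituents into socle layers via \Cref{lem:blah} and the defect formula of \Cref{pr.form}.

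First I would iterate the generic Schur-filtration fact for short exact sequences used in the proof of \Cref{le.quot_filt} through the displayed chain. This produces a filtration of $(V^*)_\mu$ whose associated graded is a direct sum of tensor products of the form
\begin{equation*}
  (V_*)_\kappa \otimes (U_0)_{\mu_{(1)}^{\ell_1}} \otimes \cdots \otimes (U_t)_{\mu_{(t+1)}^{\ell_{t+1}}},
\end{equation*}
indexed by the iterated-comultiplication decompositions of $\mu$ into $t+2$ pieces with prescribed sizes ($|\kappa| + \sum_{x=1}^{t+1}\ell_x = |\mu|$). Tensoring with $V_\nu$ then reduces the computation to the known socle filtration of the $(V_*)_\kappa \otimes V_\nu$ factor: its $(\tau+1)$-st layer consists of $V_{\kappa_{(1)}^{|\kappa|-\tau},\,\nu_{(1)}^{|\nu|-\tau}}$ with multiplicity $\langle \kappa_{(2)}^\tau,\,\nu_{(2)}^\tau\rangle$. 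Relabeling the further comultiplication of $\kappa$ as $\kappa_{(1)}^{|\kappa|-\tau} = \mu_{(t+2)}^{\ell_{t+2}}$ and $\kappa_{(2)}^\tau = \mu_{(t+3)}^\tau$ yields exactly the list of simple constituents of $(V^*)_\mu \otimes V_\nu$ with the multiplicities claimed in the statement.

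Each such constituent $V_{\lambda_t,\ldots,\lambda_0,\gamma,\delta}$ with $|\lambda_u|=\ell_{u+1}$, $|\gamma|=\ell_{t+2}$, $|\delta|=|\nu|-\tau$ occupies position $i=(\ell_1,\ldots,\ell_{t+1};\ell_{t+2};|\nu|-\tau)$, while the socle $V_{\mu,\nu}$ sits at $j=(0,\ldots,0;|\mu|;|\nu|)$. Substituting into \Cref{pr.form} yields $d(i,j) = \tau + \sum_{x=1}^{t+1} x\,\ell_x$, so by \Cref{lem:blah} the constituent lies in $\usoc^q$ precisely when \Cref{eq.tausum} holds.

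The main obstacle is that the filtration produced in the first step is a priori unrelated to the socle filtration of $(V^*)_\mu \otimes V_\nu$, and interleaving it with the intrinsic socle filtrations of the $(V_*)_\kappa \otimes V_\nu$ factors does not obviously yield the socle filtration of the whole. \Cref{lem:blah} neatly circumvents this: once the list of simple constituents (with multiplicities) is read off from any filtration, their distribution into socle layers is forced by the defect $d(i,j)$, regardless of which filtration was used to enumerate them.
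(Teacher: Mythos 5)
Your proposal is correct and follows essentially the same route as the paper's proof: a Schur filtration of $(V^*)_\mu$ through the chain $V^*_{\aleph_0}\subset\cdots\subset V^*_{\aleph_t}\subset V^*$, combined with the socle filtration of $(V_*)_\kappa\otimes V_\nu$ from \cite[Theorem 2.3]{PS}, and then \Cref{lem:blah} with the defect formula of \Cref{pr.form} to sort the resulting simple constituents into socle layers. Your final paragraph — observing that \Cref{lem:blah} makes the choice of auxiliary filtration immaterial once the constituents and multiplicities are known — makes explicit the justification that the paper compresses into the phrase ``splicing together these two filtration processes,'' which is a small but genuine improvement in clarity.
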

\begin{proof}
First, filtering $(V^*)_\mu$ alone using the same
  technique as in the proof of \Cref{le.quot_filt} we
  obtain a coarser filtration of $(V^*)_\mu\otimes V_\nu$ by
  \begin{equation*}
    0=F^0\subseteq F^1\subseteq F^2\subseteq \cdots
  \end{equation*}
  with the subquotient $F^q/F^{q-1}$ isomorphic to
  \begin{equation*}
    \bigoplus (U_t)_{\mu_{(t+1)}^{\ell_{t+1}}}\otimes\cdots\otimes (U_0)_{\mu_{(1)}^{\ell_1}}\otimes (V_*)_{\mu_{(t+2)}^{\ell_{t+2}}}\otimes V_\nu
  \end{equation*}
  for
  \begin{equation*}
    \sum^{1}_{x=t+2} \ell_x = |\mu| \text{ and } \sum^{1}_{x=t+1}x\ell_x=q-1.
  \end{equation*}
  Then we filter each subobject $(V_*)_\lambda\otimes V_\nu$ of
  one of the subquotients $F^q/F^{q-1}$ by its socle filtration.
  In the comultiplication-based notation and language, we use
  \cite[Theorem 2.3]{PS} says that
  $\usoc^p((V_*)_\lambda\otimes V_\nu)$ is isomorphic to
  \begin{equation*}
    \left(V_{\lambda_{(1)}^{|\mu|-p+1},\nu_{(1)}^{|\nu|-p+1}}\right)^{\oplus \langle \lambda_{(2)},\nu_{(2)}\rangle}. 
  \end{equation*}
  Splicing together these two filtration processes produces the
  claimed result.  Here (\ref{eq.tausum}) is responsible for identifying the submodule of $\usoc^q\left((V*)_\mu\otimes V_\nu\right)$.
%
\end{proof}

The following are more explicit versions of \Cref{le.quot_filt,le.quot_filt'}, involving the Littlewood-Richardson coefficients $N_{\mu,\nu}^\eta$.

\begin{qfbis}
 Let $u\leq t$, and let $\lambda$ and $\mu_x$, $0\le x\le t-u$, be partitions with
\begin{equation*}
\sum^{0}_{x=t-u}|\mu_x|=|\lambda|\,.
\end{equation*}
Set
\begin{equation*}
q:=1+\sum^{0}_{x=t-u}x|\mu_x|.  
\end{equation*}
Then the multiplicity of the simple object
\begin{equation*}
(U_t)_{\mu_{t-u}}\otimes\cdots\otimes (U_u)_{\mu_0}
\end{equation*}
 in the subquotient $\usoc^q((V^*/V^*_{\al_u})_\lambda)$  equals
  \begin{equation*}
    \sum N_{\mu_0,\mu_1}^{\alpha_1} N_{\alpha_1,\mu_2}^{\alpha_2}\cdots N_{\alpha_{t-u-1},\mu_{t-u}}^{\lambda}
  \end{equation*}
  with summation over repeated indices.


No other simples appear as constituents of $(V^*/V^*_{\al_u})_\lambda$. 
\end{qfbis}
\begin{proof}
  This is a reformulation of \Cref{le.quot_filt}, identifying $\ell_x$
  from that result to $|\mu_{x-1}|$ in the present one.

  Indeed, the multiplicity in question is the coefficient of
  \begin{equation*}
    \mu_{t-u}\otimes\cdots\otimes \mu_{0} 
  \end{equation*}
  in
  \begin{equation*}
    \Delta^{t-u}(\lambda)=\lambda_{(t-u+1)}\otimes\cdots\otimes \lambda_{(1)}. 
  \end{equation*}
  The very definition of the comultiplication in the ring of symmetric
  functions implies that this number is the multiplicity of
  $U_\lambda$ in the tensor product
  \begin{equation*}
    U_{\mu_{t-u}}\otimes\cdots\otimes U_{\mu_{0}}.
  \end{equation*}
  In turn, this is expressible in terms of Littlewood-Richardson
  coefficients as in the statement.
\end{proof}

\begin{qf'bis}
Let $\mu,\nu$ and $\eta_x$, $0\le x\le t$, $\xi$, $\zeta$ be partitions, and set
\begin{equation*}
 q:=1+(|\nu|-|\zeta|)+\sum_{x=0}^{t}(x+1)|\eta_j|.
\end{equation*}
Then the multiplicity of the simple object
$
V_{\eta_t,\cdots,\eta_0,\xi,\zeta}
$
in the subquotient  $\usoc^q((V^*)_\mu\otimes V_\nu)$ equals
  \begin{equation*}
   \sum N_{\eta_0,\eta_1}^{\pi_1}N_{\pi_1,\eta_2}^{\pi_2}\cdots N_{\pi_{t-1},\eta_t}^{\mu}N_{\xi,\delta}^{\pi_{t-1}}N_{\zeta,\delta}^{\nu},
  \end{equation*}
with summation over repeated indices. 

No other simples appear as constituents of $(V^*)_\mu\otimes V_\nu$.
\end{qf'bis}
\begin{proof}
  The deduction of this statement from that of \Cref{le.quot_filt'} is
  analogous to the previous proof: once more using the definition of
  the comultiplication, the multiplicity we are after is the sum
  \begin{equation}\label{eq:mults}
    \sum_\delta (\text{multiplicity of }\mu\text{ in }\eta_0\cdots\eta_t\xi\delta)\cdot (\text{multiplicity of }\nu\text{ in }\zeta\delta)
  \end{equation}
  where as before the Young diagram symbols stand for the
  corresponding Schur functions in $\sym$ and juxtaposition means
  multiplication therein.

  In turn, \Cref{eq:mults} is expressible as a sum of products of
  Littlewood-Richardson coefficients as claimed.
\end{proof}

Finally, the general result on the socle filtrations of the
indecomposable simple objects \Cref{eq:inj} is obtained by tensoring
together instances of \Cref{le.quot_filt,le.quot_filt'}.

\begin{proposition}\label{pr.quot_filt}
The subquotient  $\usoc^q(\widetilde{V}_{\lambda_t,\cdots,\lambda_0,\mu,\nu})$ of  \Cref{eq:inj} is isomorphic to
\begin{equation*}
  \sum_{u_x,y} \bigotimes_{x=t}^0\usoc^{u_x}((V^*/V^*_{\al_x})_{\lambda_x})\otimes \usoc^y((V^*)_{\mu}\otimes V_{\nu}),
\end{equation*}
with summation over all choices of $u_x$ and $y$ such that 
$
  \sum_{x=t}^0 (u_x-1)+(y-1)=q-1$.
\end{proposition}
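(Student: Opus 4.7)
The plan is to reduce the statement to counting simple constituents at the correct defect, using \Cref{lem:blah}, the local socle filtrations of \Cref{le.quot_filt,le.quot_filt'}, and the additivity of the defect function under tensoring.

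First, $\widetilde{V}_{\lambda_t,\cdots,\lambda_0,\mu,\nu}$ is a tensor product of $t+2$ indecomposable injectives, each with simple socle, and the socle of the whole is the tensor product of the socles, namely $V_{\lambda_t,\cdots,\lambda_0,\mu,\nu}$.  By \Cref{lem:blah} (more precisely, the argument underlying it), each constituent of $\widetilde{V}_{\lambda_t,\cdots,\lambda_0,\mu,\nu}$ appears in $\usoc^s$ for the unique $s$ with $s-1$ equal to its defect from the socle, and does so with the same multiplicity with which it appears in $\widetilde{V}_{\lambda_t,\cdots,\lambda_0,\mu,\nu}$ itself.  The task is therefore to match these multiplicities with those read off from the right-hand side.

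The key point is the additivity of the defect function under tensoring.  Writing $i_x\in I$ for the index of a constituent $W_x$ of $(V^*/V^*_{\al_x})_{\lambda_x}$ and $j_x$ for the index of its socle (and similarly $i_{\mu,\nu}$, $j_{\mu,\nu}$ for the last tensorand), the index of the tensor product $W_t\otimes\cdots\otimes W_0\otimes Y$ is the componentwise sum of the individual indices.  Since the formula of \Cref{pr.form} is linear in both arguments once a common assignment of weights is fixed, and the cardinals $\al_0<\al_1<\cdots<\al_t$ are already interval-closed with weights $0,1,\ldots,t$, one obtains
\begin{equation*}
d\left(\sum_{x=t}^{0} i_x+i_{\mu,\nu},\ \sum_{x=t}^{0} j_x+j_{\mu,\nu}\right)=\sum_{x=t}^{0} d(i_x,j_x)+d(i_{\mu,\nu},j_{\mu,\nu}).
\end{equation*}

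Finally, since tensor product over a field is exact, constituents of tensor products are tensor products of constituents with multiplicities multiplying.  Hence the constituents of $\widetilde{V}_{\lambda_t,\cdots,\lambda_0,\mu,\nu}$ at total defect $q-1$ from the socle are precisely the tuples $(W_t,\ldots,W_0,Y)$ with each $W_x$ appearing in some $\usoc^{u_x}((V^*/V^*_{\al_x})_{\lambda_x})$ and $Y$ in some $\usoc^y((V^*)_\mu\otimes V_\nu)$, subject to $\sum_{x=t}^{0}(u_x-1)+(y-1)=q-1$.  Summing over such tuples yields the stated direct-sum decomposition.  The one substantive step is the defect-additivity identity; everything else is bookkeeping based on \Cref{lem:blah} together with the factor-by-factor descriptions of \Cref{le.quot_filt,le.quot_filt'}.
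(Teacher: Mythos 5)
Your proof is correct and follows the same approach the paper takes (whose own proof is simply a pointer back to \Cref{le.quot_filt,le.quot_filt'}): identify constituents via exactness of $\otimes$, place each in its socle layer via \Cref{lem:blah}, and observe that the defect formula of \Cref{pr.form} is additive under the componentwise sum of indices. Your write-up makes explicit the defect-additivity step and the observation that the tensor products of simples coming from the different tensorands are themselves semisimple (since the individual simple factors involve Schur functors applied to the mutually disjoint layers $U_s$), which is what licenses reading the multiplicities off factorwise.
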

\begin{proof}
  The ingredients are contained in \Cref{le.quot_filt,le.quot_filt'}
  and their proofs.
%
%
\end{proof}

\begin{remark}\label{re.non-sd}
  In \cite[$\S$3.7]{us} we show that when $\alpha=\aleph_0$, the
  category $\overline{\bT}_\alpha$ is not only Koszul, but is in fact
  {\it self-dual}: the quadratic algebra $E\overline{\bT}_{\aleph_0}$
  is anti-isomorphic to its quadratic dual.

  One consequence of the self-duality is that dimensions of $\ext$-groups can be read off from socle filtrations of indecomposable
  injectives: when $\alpha=\aleph_0$ the simples \Cref{eq:simple} are
  indexed by three Young diagrams, and \cite[Corollary 3.34]{us} then
  says that
  \begin{equation*}
    \dim \ext^q(V_{\lambda_0,\mu,\nu},V_{\lambda_0',\mu',\nu'}) 
  \end{equation*}
  equals the multiplicity of $V_{\lambda_0,\mu^{\perp},\nu}$ in the
  $\usoc^{q+1}$ subquotient of the injective hull of
  $V_{\lambda_0',(\mu')^\perp,\nu'}$; here $^\perp$ indicates passage to the transposed Young diagram.
  In other words, in order to compute the Ext-group $ \ext^q(V_{\lambda_0,\mu,\nu},V_{\lambda_0',\mu',\nu'})$, one simply passes to the socle filtration of $\widetilde{V}_{\lambda'_0,(\mu')^\perp,\nu'}$.  
  
  For $\alpha= \aleph_t$ for $t\geq 1$, no such ``transposing pattern'' computes $ \ext^q(V_{\lambda_t,\ldots,\lambda_0,\mu,\nu},V_{\lambda_t',\ldots,\lambda_0',\mu',\nu'})$.  Indeed, the subquotient $\usoc^3 V^*$ of the injective hull $V^*$
  of $V_*$ is nonzero, while the quotient
  $V^*/V_*$ is injective and hence all $\ext^2(\bullet,V_*)$ vanish.  The problem of computing the Ext-groups of pairs of simple objects in $\bT_{\aleph_t}$ is thus open for $t\geq 1$.
\end{remark}

We conclude this subsection by

\begin{example}\label{ex.examples}
Consider the objects $\widetilde{V}_{\emptyset,(1,1),\emptyset,\emptyset}$ and $\widetilde{V}_{\emptyset,\emptyset,(1,1),(1)}$ in $\bT_{\aleph_1}$.  They have respective socle filtrations
$$
\begin{tabular}{ccc}
&\multirow{4}{*}{$\phantom{some space}$}&$V_{(1,1),\emptyset,\emptyset,(1)}$\\[0.03in]\cline{3-3}
&&$V_{(1),(1),\emptyset,(1)}\oplus V_{(1),\emptyset,\emptyset,\emptyset}\vphantom{1^{1^1}}$\\[0.03in]\cline{3-3}
$V_{(1,1),\emptyset,\emptyset,\emptyset}$&&$V_{\emptyset,(1,1),\emptyset,(1)}\oplus V_{(1),\emptyset,(1),(1)} \oplus V_{\emptyset,(1),\emptyset,\emptyset}\vphantom{1^{1^1}}$\\[0.03in]\cline{3-3}\cline{1-1}
$V_{(1),(1),\emptyset,\emptyset}$&&$V_{\emptyset,(1),(1),(1)}\oplus V_{\emptyset,\emptyset,(1),\emptyset}\vphantom{1^{1^1}}$  \\[0.03in]\cline{3-3}\cline{1-1}
$V_{\emptyset,(1,1),\emptyset,\emptyset}$&&$V_{\emptyset,\emptyset,(1,1),(1)}\vphantom{1^{1^1}}$\\
\end{tabular}
$$
where $\usoc^s$ is displayed at level $s$ from the bottom (the bottom being the socle).  The object $$\tilde{V}_{\emptyset,(1,1),(1,1),(1)}=\tilde{V}_{\emptyset,(1,1),\emptyset,\emptyset}\otimes \tilde{V}_{\emptyset,\emptyset,(1,1),(1)}$$ has socle filtration
$$
\begin{array}{c}
	V_{(2,2),\emptyset,\emptyset,(1)}\oplus V_{(2,1,1),\emptyset,\emptyset,(1)}\oplus V_{(1,1,1,1),\emptyset,\emptyset,(1)}
	\\[0.03in]\hline
	2\,V_{(2,1),(1),\emptyset,(1)}\oplus 2\,V_{(1,1,1),(1),\emptyset,(1)}\oplus V_{(2,1),\emptyset,\emptyset,\emptyset}\oplus V_{(1,1,1),\emptyset,\emptyset,\emptyset}\vphantom{1^{1^1}}
	\\[0.03in]\hline\vphantom{1^{1^1}}
	3\,V_{(1,1),(1,1),\emptyset,(1)}\oplus V_{(2),(1,1),\emptyset,(1)} \oplus V_{(1,1),(2),\emptyset,(1)} \oplus V_{(2),(2),\emptyset,(1)}
	\\\vphantom{1^{1^1}}
	\oplus V_{(2),(1),\emptyset,\emptyset}\oplus 2\,V_{(1,1),(1),\emptyset,\emptyset}\oplus V_{(2,1),\emptyset,(1),(1)} \oplus V_{(1,1,1),\emptyset,(1),(1)}
	\\[0.03in]\hline\vphantom{1^{1^1}}
	V_{(2),(1),(1),(1)}\oplus 2\, V_{(1,1),(1),(1),(1)} \oplus V_{(1),(2),\emptyset,\emptyset} \oplus 2\, V_{(1),(1,1),\emptyset,\emptyset} 
	\\\vphantom{1^{1^1}}
	\oplus V_{(1,1),\emptyset,(1),\emptyset}\oplus 2\,V_{(1),(2,1),\emptyset,(1)} \oplus 2\,V_{(1),(1,1,1),\emptyset,(1)} 	
	\\[0.03in]\hline\vphantom{1^{1^1}}
	V_{\emptyset,(2,1),\emptyset,\emptyset}\oplus V_{\emptyset,(1,1,1),\emptyset,\emptyset}\oplus V_{(1),(2),(1),(1)} \oplus 2\,V_{(1),(1,1),(1),(1)}\oplus V_{(1),(1),(1),\emptyset}
	\\\vphantom{1^{1^1}}
	\oplus V_{\emptyset,(2,2),\emptyset,(1)} \oplus V_{\emptyset,(2,1,1),\emptyset,(1)}\oplus V_{\emptyset,(1,1,1,1),\emptyset,(1)}\oplus V_{(1,1),\emptyset,(1,1),(1)}
	\\[0.03in]\hline\vphantom{1^{1^1}}
	V_{\emptyset,(2,1),(1),(1)}\oplus V_{\emptyset,(1,1,1),(1),(1)}\oplus V_{\emptyset,(1,1),(1),\emptyset} \oplus V_{(1),(1),(1,1),(1)}
	\\[0.03in]\hline\vphantom{1^{1^1}}
	V_{\emptyset,(1,1),(1,1),(1)}
\end{array}
$$
\end{example}

\section{Universality}\label{se.univ}


Let $(\cD,\otimes,\mathbbm{1}_\cD)$ be a $\bK$-linear, abelian, symmetric
monoidal category with monoidal unit $\mathbbm{1}_\cD$, such that
all tensor functors $x\otimes \bullet$ are exact. 

\begin{convention}\label{cv.ccpl}
  A category $\cD$ as above is a {\it tensor category}, and a
  $\bK$-linear symmetric monoidal functor is a {\it tensor
    functor}. Similarly, a {\it tensor natural transformation} is one
  between tensor functors that respects all of the structure in the
  guessable fashion.
  
  A tensor category {\it has coproducts} if it has
  arbitrary direct sums preserved by all functors of the form
  $a\otimes \bullet$.
\end{convention}

With all of this in hand, the main result of the section reads as
follows.

\begin{theorem}\label{th.univ-gen}
  Let
  $\mathbf{q}:A\otimes B\to \mathbbm{1}_\cD$ be a morphism in $\cD$ and
  \begin{equation}\label{eq:filt-x-gen}
    0\subset A_0\subseteq A_1\subseteq  \cdots \subseteq A_a\subset A
  \end{equation}
  a transfinite filtration of $A$ indexed by ordinals from $0$ to $a$.  Set $\alpha=\aleph_a$.  Then
\begin{enumerate}
\renewcommand{\labelenumi}{(\alph{enumi})}
\item up to tensor natural isomorphism, there exists a unique left
  exact tensor functor $\mathcal{F}:\bT_{\alpha} \rightsquigarrow \cD$ turning the pairing
  $V^*\otimes V\to \bK$ into $\mathbf{q}$ and the transfinite socle filtration of $V^*$
  \begin{equation*}
    0\subset V^*_{\aleph_0}\subset \ldots \subset V^*_\alpha \subset V^*
  \end{equation*}
  into \Cref{eq:filt-x-gen};
\item if furthermore $\cD$ has coproducts in the sense of
  \Cref{cv.ccpl}, then the functor $\mathcal{F}$ extends uniquely to a
  coproduct-preserving tensor functor
  $\overline{\bT}_{\aleph_t}  \rightsquigarrow \cD$.
\end{enumerate}
\end{theorem}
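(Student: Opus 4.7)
The plan is to construct $\mathcal{F}$ on the generating class of ``standard'' tensor products of the form $\bigotimes_\beta (V^*/V^*_\beta)^{\otimes n_\beta}\otimes (V^*)^{\otimes n}\otimes V^{\otimes m}$ (the objects $X_i$ from \Cref{subse.order}) and extend to arbitrary objects via the kernel presentations afforded by \Cref{th.indec_inj-gen}. On a standard tensor product we replace each tensorand by its $\cD$-counterpart via $V\leftrightarrow B$, $V^*\leftrightarrow A$, and $V^*/V^*_\beta\leftrightarrow A/A_{\iota(\beta)}$, where $\iota$ is the order-preserving bijection between the indexing ordinals of the socle filtration \Cref{eq.transfilt} and those of \Cref{eq:filt-x-gen}. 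Here we use that the socle filtration of $V^*$ is transparently indexed by ordinals $\le a+1$ (since $\dim V^*_{\aleph_s}/V^*_{\aleph_{s-1}}$ has Loewy structure matching that of the layers $A_s/A_{s-1}$).

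Next I would analyse morphisms between standard tensor products. The density statements (\Cref{le.dense_gen}) together with classical Schur--Weyl theory imply that the space of $\fgl^M$-equivariant maps between two standard tensor products is spanned by compositions of the braiding, of evaluations via $\mathbf{p}$, and of the canonical surjections $V^*\twoheadrightarrow V^*/V^*_\beta$. Each of these ``structure morphisms'' has a tautological analogue in $\cD$ built from the symmetric braiding in $\cD$, from $\mathbf{q}$, and from \Cref{eq:filt-x-gen}; this simultaneously forces the definition of $\mathcal{F}$ on morphisms between standard tensor products and establishes uniqueness among left exact tensor functors satisfying the prescribed conditions.

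The main obstacle is checking that $\mathcal{F}$ respects relations, i.e.\ that $\bK$-linear combinations of structure morphisms that vanish in $\bT_\alpha$ are also zero in $\cD$. This is where Koszulity enters in a crucial way: by \Cref{th.kosz-gen}, the Ext-algebra of $\overline{\bT}_\alpha$ is generated in degree one, so the entire category is determined by the quadratic relations among degree-one morphisms between indecomposable injectives. These quadratic relations are built from associativity, naturality of the braiding, and the defining identity of $\mathbf{q}$ as a morphism in $\cD$, all of which hold tautologically in the target. Once compatibility with relations is verified, $\mathcal{F}$ extends to arbitrary $M\in\bT_\alpha$ by writing $M$ as a kernel of a map between finite sums of standard tensor products and transporting the kernel to $\cD$; left exactness of $\mathcal{F}$ is then built into this construction.

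For part (b), the extension to $\overline{\bT}_\alpha$ is formal. The Grothendieck envelope is obtained from $\bT_\alpha$ by freely adjoining arbitrary direct sums (\Cref{re.sigma}), and since $\otimes$ distributes over such sums on both sides, a coproduct-preserving tensor extension of $\mathcal{F}$ is determined on $\bigoplus_\kappa X_\kappa$ with $X_\kappa\in\bT_\alpha$ by $\bigoplus_\kappa \mathcal{F}(X_\kappa)$, which exists because $\cD$ is assumed to have coproducts. Checking that the resulting functor is monoidal reduces to bilinearity of $\otimes$ over coproducts, which holds in both categories by hypothesis.
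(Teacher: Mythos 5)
Your proposal has the right spirit but omits the structural step that the paper uses to prove this particular statement, and it blurs a distinction that matters for the argument.

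The paper does \emph{not} prove Theorem~\ref{th.univ-gen} directly; it first proves Theorem~\ref{th.univ} (the case $\alpha=\aleph_t$, $t<\infty$), for which all the hard work about Hom-spaces and relations is done (Lemmas~\ref{le.deg-0}--\ref{le.rel}) and for which the framework result \cite[Theorem 2.22]{us} -- which is formulated for \emph{finite} ordered Grothendieck categories in the sense of \Cref{def.finordgroth} -- can be applied via Corollary~\ref{cor.bigone}(d). The passage to a general infinite cardinal $\alpha$ in Theorem~\ref{th.univ-gen} is then a separate direct-limit argument: one exhibits $\bT_\alpha$ as the filtered union of the full subcategories $\bT_{\bf s}\cong \bT_{\aleph_t}$ indexed by finite tuples ${\bf s}=(\beta_0<\cdots<\beta_t\le\alpha)$, applies \Cref{th.univ} to each, and uses the uniqueness clause of \Cref{th.univ} to make the system of functors $\mathcal{F}_{\bf s}$ compatible. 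Your proposal tries to run the Koszulity/quadratic-relations argument directly on $\bT_\alpha$, but the explicit description of $\cA_1$ and its relations (Lemmas~\ref{le.deg-1}, \ref{le.rel}) and the coalgebra-comodule equivalence of Corollary~\ref{cor.bigone}(d) are only stated and proved in the paper for $\alpha=\aleph_t$. Without either reproving these for general $\alpha$ or performing the direct-limit reduction, the claim that ``the quadratic relations hold tautologically in $\cD$'' has nothing concrete to attach to; the paper earns that statement by the explicit list in \Cref{le.rel}.

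Two smaller points. The parenthetical justifying the identification $V^*/V^*_\beta\leftrightarrow A/A_{\iota(\beta)}$ -- ``since $\dim V^*_{\aleph_s}/V^*_{\aleph_{s-1}}$ has Loewy structure matching that of the layers $A_s/A_{s-1}$'' -- is confused and not a hypothesis of the theorem: nothing in Theorem~\ref{th.univ-gen} constrains the layers of $A$, and a dimension is not an object with Loewy structure (the layers $V^*_{\aleph_s}/V^*_{\aleph_{s-1}}$ are simple). The order-preserving bijection between the filtration indices is all that is needed, full stop. Second, your assertion that density plus ``classical Schur--Weyl theory'' spans all morphisms between standard tensor products is the content of Remark~\ref{re.full}, which rests on \cite[Lemma 2.19]{us}; it does not follow formally from \Cref{le.dense_gen} alone and should be cited or proved, not asserted.
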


%
%
%
%
%
%
%
%
%
%
%
%
%
%
%
%
%
%
%
%

We first prove Theorem~\ref{th.univ-gen} in the case $\alpha=t$ for $t\in\mathbb{Z}_{\geq 0}$.  The result is as follows.

\begin{theorem}\label{th.univ}
  Let $t$ be a nonnegative integer. Let $\mathbf{q}:A\otimes B\to \mathbbm{1}_\cD$ be a morphism in $\cD$ and 
  \begin{equation}\label{eq:filt-x}
    0\subset A_0\subseteq\cdots\subseteq A_{t}\subset A
  \end{equation}
a filtration in $\cD$ by subobjects of $A$. Then
\begin{enumerate}
\renewcommand{\labelenumi}{(\alph{enumi})}
\item up to tensor natural isomorphism, there exists a unique left
  exact tensor functor $\mathcal{F}:\bT_{\aleph_t}  \rightsquigarrow \cD$ turning the pairing
  $V^*\otimes V\to \bK$ into $\mathbf{q}$ and the socle filtration of $V^*$
  \begin{equation*}
    0\subset V^*_{\aleph_0}\subset\cdots\subset V^*_{\aleph_{t}}\subset V^*
  \end{equation*}
  into \Cref{eq:filt-x};
\item if furthermore $\cD$ has coproducts in the sense of
  \Cref{cv.ccpl}, then the functor $\mathcal{F}$ extends uniquely to a
  coproduct-preserving tensor functor
  $\overline{\bT}_{\aleph_t}  \rightsquigarrow \cD$.
\end{enumerate}
\end{theorem}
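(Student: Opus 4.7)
The strategy is to construct $\mathcal{F}$ on a set of generators, extend by universal properties, and verify well-definedness through the Koszulity of $\bT_{\aleph_t}$ established in Theorem~\ref{th.kosz-gen}. First, I would set $\mathcal{F}(V) = B$, $\mathcal{F}(V^*) = A$, and $\mathcal{F}(V^*_{\aleph_s}) = A_{s-1}$ for $0 \le s \le t$ (with the convention $A_{-1} := 0$), sending the pairing $V^* \otimes V \to \bK$ to $\mathbf{q}$ and the socle-filtration inclusions of $V^*$ to the morphisms of \Cref{eq:filt-x}. Combined with the tensor structure, symmetric-group actions, and Schur functor projections, this determines $\mathcal{F}$ on every indecomposable injective $\tilde{V}_{\lambda_t,\ldots,\lambda_0,\mu,\nu}$ described in Corollary~\ref{cor.bigone}(b). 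Since every object of $\bT_{\aleph_t}$ embeds into a direct sum of such injectives, left exactness forces the extension of $\mathcal{F}$ to the whole category.

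The main obstacle is checking that all relations among the generating morphisms are satisfied in $\cD$. For this, I would invoke Corollary~\ref{cor.bigone}(d), which presents $\overline{\bT}_{\aleph_t}$ as comodules over a Koszul semiperfect coalgebra $C$. By Koszul duality, $\bT_{\aleph_t}$ admits a presentation as a tensor category with subquotients whose generating morphisms lie in degree one (the pairing, the filtration inclusions, the quotient maps $V^* \twoheadrightarrow V^*/V^*_{\aleph_s}$, and the symmetric-group symmetrizers) and whose relations are concentrated in the quadratic piece of the Ext-algebra. Each such quadratic relation expresses either a symmetric-monoidal coherence condition, a compatibility between $\mathbf{q}$ and the filtration (for instance, that the composition $A_{s-1} \otimes B \hookrightarrow A \otimes B \to \mathbbm{1}_\cD$ factors through the expected subobject), or a Littlewood--Richardson factorization mirrored in the socle-filtration computation of Proposition~\ref{pr.quot_filt}. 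Each such constraint holds automatically in $\cD$ given the data $(\mathbf{q}, A_\bullet)$, so $\mathcal{F}$ is well-defined; uniqueness is automatic, since the generating objects and morphisms are forced by the hypotheses.

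For part (b), by Remark~\ref{re.sigma}, $\overline{\bT}_{\aleph_t}$ is obtained from $\bT_{\aleph_t}$ by formally adjoining arbitrary direct sums. Under the coproduct hypothesis on $\cD$, the functor from part (a) extends uniquely to a coproduct-preserving tensor functor on $\overline{\bT}_{\aleph_t}$ by sending each sum to the corresponding coproduct in $\cD$, and I would check that coproduct-preservation together with the assignment on $\bT_{\aleph_t}$ completely determines the extension. Finally, Theorem~\ref{th.univ-gen} would follow from Theorem~\ref{th.univ} by an essentially identical argument, with the one additional point being that the transfinite socle filtration of $V^*$ (established in Corollary following \Cref{le.ess}) now has order type $a+1$ rather than $t+1$, and the Koszul presentation argument transfers verbatim thanks to the general Koszulity result Theorem~\ref{th.kosz-gen} valid for all infinite cardinals $\alpha$.
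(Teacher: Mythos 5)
Your overall strategy is the one the paper follows: define $\mathcal{F}$ on the degree-one generators (the objects $V$, $V^*$, the filtration subobjects, the pairing, and the quotient maps $V^*/V^*_{\aleph_{s-1}}\to V^*/V^*_{\aleph_s}$), reduce well-definedness to checking quadratic relations via the Koszulity result of \Cref{th.kosz-gen} reinterpreted through \Cref{cor.bigone}(d), and verify that these relations hold in $\cD$. The paper formalizes the reduction to the injective objects by invoking \cite[Theorem 2.22]{us}, but the content of your ``left exactness forces the extension'' remark is the same device.

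There is, however, a concrete indexing slip that you should repair. You set $\mathcal{F}(V^*_{\aleph_s}) = A_{s-1}$ for $0 \le s \le t$ with $A_{-1}:=0$. This sends the nonzero subobject $V^*_{\aleph_0}=V_*$ to $0$, and leaves $A_t$ out of the image of the filtration entirely. The correct assignment matching the two filtrations
\[
0\subset V^*_{\aleph_0}\subset\cdots\subset V^*_{\aleph_t}\subset V^*
\qquad\text{and}\qquad
0\subset A_0\subseteq\cdots\subseteq A_t\subset A
\]
is $\mathcal{F}(V^*_{\aleph_s}) = A_s$ for $0\le s\le t$ and $\mathcal{F}(V^*)=A$, with the quotient maps sent to $A/A_{s-1}\to A/A_s$. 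Separately, your characterization of the quadratic relations is a bit off: the relations recorded in \Cref{le.rel} are all commutation-type relations (evaluation with evaluation, evaluation with surjection, surjection with surjection, twisted by permutations), not a ``factoring through'' compatibility of the form $A_{s-1}\otimes B\to\mathbbm{1}_\cD$, nor anything involving Littlewood--Richardson coefficients; but since those actual commutation relations are automatic in any symmetric monoidal category, your conclusion that they hold in $\cD$ is still correct.
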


Before we embark on the proof of \Cref{th.univ}, it will
be convenient to slightly restate Corollary \ref{cor.bigone}(d) in
the spirit of \cite[$\S$3.4]{us}. For this purpose, we introduce
the tensor algebra
\begin{equation*}
  \mathrm{T}:=\mathrm{T}\left(\bigoplus_{s=t}^0(V^*/V_{\aleph_s}^*)\oplus V^*\oplus V\right) 
\end{equation*}
of the direct sum of the displayed ``degree-$1$'' indecomposable injectives. For $r\in\mathbb{Z}_{\geq 0}$, we
denote the degree-$r$ truncation of $\mathrm{T}$ by
$\mathrm{T}^{\le r}$.

Next, define 
\begin{equation*}
  \cA^r=\End_{\bT_{\aleph_t}}(\mathrm{T}^{\le r}),\ \cA=\bigcup_{r\in\mathbb{Z}_{\geq 0}} \cA^r,
\end{equation*}
where the inclusions $\cA^r\subset \cA^{r+1}$ are the obvious
ones (extension of an endomorphism by $0$). We have the following analogues of \cite[Definition 3.18]{us}
and \cite[Theorem 3.19]{us} (providing an alternate version of
Corollary \ref{cor.bigone}(d)).

\begin{definition}
  An $\cA$-module is {\it locally unitary} if it is unitary over some
  $\cA^r\subset \cA$.
\end{definition}

\begin{theorem}\label{th.kosz-fin-alg}
  The functor $\Hom_{\bT_{\aleph_t}}(\bullet,\mathrm{T})$ implements a contravariant
  equivalence between $\bT_{\aleph_t}$ and the category of
  finite-dimensional modules over $\cA$ which are locally
  unitary. \qedhere
\end{theorem}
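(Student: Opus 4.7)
The plan is to model the argument on \cite[Theorem 3.19]{us}, exploiting the fact that $\mathrm{T}$ acts as an injective cogenerator for $\bT_{\aleph_t}$. As a preliminary step I would verify this cogenerator property: every indecomposable injective of $\bT_{\aleph_t}$, classified in \Cref{cor.bigone}(b) as a tensor product of Schur functors applied to the generating injectives $V^*/V^*_{\aleph_s}$, $V^*$ and $V$, appears as a direct summand of $\mathrm{T}^{\le r}$ for some $r$, since any Schur functor $(\bullet)_\lambda$ produces a direct summand of $\bullet^{\otimes |\lambda|}$. Consequently every finite-length object $X\in\bT_{\aleph_t}$ admits an embedding into $(\mathrm{T}^{\le r})^{\oplus N}$ for suitable $r,N$.

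Next, I would check that $F:=\Hom_{\bT_{\aleph_t}}(\bullet,\mathrm{T})$ lands in the category of finite-dimensional locally unitary $\cA$-modules. Fixing an embedding $X\hookrightarrow (\mathrm{T}^{\le r})^{\oplus N}$ as above, the natural inclusions $\Hom(X,\mathrm{T}^{\le r'})\hookrightarrow \Hom(X,\mathrm{T})$ stabilize for $r'\ge r$, so $F(X)$ is finite-dimensional and its $\cA$-action factors through $\cA^{r'}$; in particular it is unitary over some $\cA^{r'}$.

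The crux is full faithfulness and essential surjectivity. For full faithfulness one wants the natural map
\begin{equation*}
  \Hom_{\bT_{\aleph_t}}(X,Y) \longrightarrow \Hom_{\cA}\bigl(F(Y),F(X)\bigr)
\end{equation*}
to be an isomorphism; by the cogenerator property we can reduce to the case where $Y$ is a summand of $\mathrm{T}^{\le r}$, whereupon the statement becomes tautological given the definition of $\cA^r=\End_{\bT_{\aleph_t}}(\mathrm{T}^{\le r})$. For essential surjectivity, a finite-dimensional $\cA$-module $M$ that is unitary over $\cA^r$ is tantamount to a finite-dimensional module over the finite-dimensional algebra $\cA^r=\End_{\bT_{\aleph_t}}(\mathrm{T}^{\le r})$; since $\mathrm{T}^{\le r}$ is itself of finite length in $\bT_{\aleph_t}$, the classical contravariant Morita duality between a finite-length object and its endomorphism algebra produces an $X\in\bT_{\aleph_t}$, realized as the kernel of a morphism between finite sums of summands of $\mathrm{T}^{\le r}$, such that $F(X)\cong M$.

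The main obstacle I anticipate is verifying compatibility with the colimit structure $\cA=\bigcup_r \cA^r$: one must check that the object $X$ produced above is independent of the choice of $r$, and that the equivalence intertwines the inclusions $\cA^r\hookrightarrow \cA^{r+1}$ (which extend endomorphisms by zero) with the passage from $\mathrm{T}^{\le r}$ to $\mathrm{T}^{\le r+1}$. This should follow by observing that enlarging $r$ only adjoins summands of $\mathrm{T}^{\le r+1}$ that are orthogonal to the image of $X$, hence contribute trivially to $F(X)$; as a byproduct this identifies the coalgebra $C$ of \Cref{cor.bigone}(d) with a finite dual of $\cA$, making the present theorem an alternate packaging of that corollary.
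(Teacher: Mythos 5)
The paper gives no proof of this theorem; it simply carries the terminal \verb|\qedhere| and, in the preceding sentence, declares it an ``analogue of \cite[Theorem 3.19]{us}.'' Your proposal to model the argument on that reference is therefore exactly the intended approach, and the skeleton you give (injective cogenerator property of $\mathrm{T}$, factoring $F$ through finite-dimensional locally unitary modules, full faithfulness via reduction to summands of $\mathrm{T}^{\le r}$, essential surjectivity via the classical contravariant Morita duality for $\End(\mathrm{T}^{\le r})$) is correct.

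The one place where you assert more than you justify, and which is actually the load-bearing technical point, is the claim that the inclusions $\Hom(X,\mathrm{T}^{\le r'})\hookrightarrow\Hom(X,\mathrm{T})$ ``stabilize for $r'\ge r$.'' Embedding $X$ into $(\mathrm{T}^{\le r})^{\oplus N}$ does not by itself bound maps \emph{out} of $X$. What you actually need is: if $X$ is a subquotient of $\mathrm{T}^{\le r}$, then $\Hom(X,\mathrm{T}_s)=0$ for $s>r$. This follows because each simple $V_{\lambda_t,\ldots,\lambda_0,\mu,\nu}$ has a well-defined tensor degree $\sum|\lambda_s|+|\mu|+|\nu|$, and this degree is monotone along $\preceq$: conditions (b) and (c) in the definition of the poset $(I,\preceq)$ give that if $a\preceq b$ then the two tuples have equal ``content'' $\sum n_\beta+n-m$ while $m$ increases, so $\sum n_\beta+n+m$ increases. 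Hence all simple subquotients of $\mathrm{T}^{\le r}$ have tensor degree $\le r$, while the socle constituents of $\mathrm{T}_s$ have tensor degree exactly $s$, forcing the vanishing. This same observation is what rescues your phrase ``tantamount'' in the essential surjectivity step: a priori, restricting a locally unitary $\cA$-module unitary over $\cA^r$ to an $\cA^r$-module and then producing $X$ by Morita duality only gives $F(X)\cong M$ as $\cA^r$-modules, but because $\Hom(X,\mathrm{T}_s)=0$ for $s>r$, the idempotent $e_r$ acts by identity on $F(X)$, so every $\alpha\in\cA$ acts on both $F(X)$ and $M$ via $e_r\alpha e_r\in\cA^r$ and the $\cA^r$-isomorphism is automatically an $\cA$-isomorphism. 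Finally, the ``obstacle'' you anticipate in the last paragraph is not actually one: once full faithfulness is established, the object $X$ realizing a given $M$ is unique up to isomorphism, so independence of $r$ is automatic and no separate compatibility check with the colimit structure is needed.
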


\begin{remark}\label{re.coalg-choice}
  The inclusions $\cA^r\subset \cA$ split naturally, and hence give
  rise to inclusions $(\cA^r)^*\subset (\cA^{r+1})^*$ of dual
  finite-dimensional coalgebras. The union $\bigcup_r (\cA^r)^*$ can
  be chosen for our coalgebra $C$ from Corollary \ref{cor.bigone}(d).

  Moreover, the Koszulity of the coalgebra $C$ translates to the fact that all algebras 
  $\cA^r$ are Koszul and hence quadratic.
\end{remark}

We grade $\cA^r$ as follows:

\begin{definition}\label{def.grading}
  For $d\ge 0$, the degree-$d$ homogeneous component $\cA^r_d$ is the
  direct sum of all spaces of morphisms
\begin{equation*}
  \mathrm{T}^{\le r}\to Y\to Z\to \mathrm{T}^{\le r},
\end{equation*}
where 
\begin{itemize}
\item $Y$ and $Z$ are indecomposable direct summands of $\mathrm{T}^{\le r}$
  such that $\soc Z \in \cS_i$ and $\soc Y \in \cS_j$;
\item $d(i,j)=d$ (the defect from \Cref{def.def});
\item the outer arrows are the surjection and inclusion realizing respectively $Y$
  and $Z$ as direct summands.
\end{itemize}
The gradings of the various algebras $\cA^r$ are then compatible with
the inclusions $\cA^r\subset \cA^{r+1}$, so $\cA$ itself acquires an $\mathbb{Z}_{\geq0}$-grading.

We write $\deg(x)$ for the degree of an element $x\in \cA$.
\end{definition}

  The fact that \Cref{def.grading} does indeed define a grading
  follows from the triangle property of the defect, Corollary~\ref{cor.defecttriangle}(b).
  These gradings make the algebras $\cA^r$ Koszul, and the resulting
  grading on $\cA$ corresponds by duality to the grading on $C$ alluded to in
  Corollary \ref{cor.bigone}(d), see \Cref{re.coalg-choice}.

As part of our proof of \Cref{th.univ}, we will describe the
degree-one and degree-two components of $\cA$, as well as its
relations.  The latter are all quadratic by the Koszulity of the coalgebra $C$ from Corollary \ref{cor.bigone}(d).
 
\begin{notation}\label{not.indx}
  For $i=(n_s,n,m)$ we set 
  \begin{equation*}
    i_\ell = {\bf n}_i = (n_t,\cdots,n_0,n),\ i_r = m_i = m.
  \end{equation*}
We also set 
\begin{equation*}
  i_- = (n_t,\cdots,n_0,n-1,m-1),
\end{equation*}
\begin{equation*}
  i^s_{\pm}  = (n_t,\cdots, n_{s}+1,n_{s-1}-1,\cdots, n_0,n,m)
\end{equation*}
and 
\begin{equation*}
  i^s_+  = (n_t,\cdots, n_{s}+1,n_{s-1},\cdots, n_{0},n,m)
\end{equation*}
for $0\le s\le t$, where for the purpose of defining $i^0_{\pm}$ we
regard the component $n$ as $n_{-1}$.
\end{notation}

\begin{notation}\label{not.sym}
Let $S_p$ be the symmetric group on $p$ symbols. We denote multiple products $S_p\times S_u\times S_q\times\cdots$ by
$S_{p,u,q,\cdots}$.  For $i=\left(n_t,\ldots,n_0,n,m\right)\in I$, we set $S_{i}=S_{n_t,\ldots,n_0,n,m}$.  In addition, in the rest of the paper, $\Hom(\bullet,\bullet)=\Hom_{\bT_{\aleph_t}}(\bullet,\bullet)$ and $\End(\bullet)=\End_{\bT_{\aleph_t}}(\bullet)$.
\end{notation}

As an immediate consequence of \Cref{pr.simples} and the
classification of simple objects and their injective envelopes given
in \Cref{th.indec_inj-gen}, we have the following analogue of
\cite[Lemma 3.22]{us}, describing the degree-zero component
of $\cA$.

\begin{lemma}\label{le.deg-0}
  Let $i=\left(n_t,\ldots,n_0,n,m\right)\in I$. The endomorphism algebra of the injective
  object 
  $$X_i=\bigoplus_{s=t}^0 \,\left(V^*/V^*_{\aleph_s}\right)^{\otimes n_s} \otimes \left(V^*\right)^{\otimes n}\otimes V^{\otimes m}$$ is isomorphic to the group algebra $\bK S_i$.  \qedhere
\end{lemma}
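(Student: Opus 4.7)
The plan is to produce a natural map $\bK S_i\to\End(X_i)$ and show it is an isomorphism by restricting to the socle and applying Schur-Weyl duality.

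First, the symmetric group $S_i=S_{n_t}\times\cdots\times S_{n_0}\times S_n\times S_m$ acts on $X_i$ by permuting like tensor factors, and this action commutes with $\fgl^M$, giving a homomorphism $\bK S_i\to\End(X_i)$. Schur-Weyl duality applied to each of the tensor powers $(V^*/V^*_{\aleph_s})^{\otimes n_s}$, $(V^*)^{\otimes n}$ and $V^{\otimes m}$ decomposes $X_i$, as an $\fgl^M\times S_i$-module, into a direct sum
\begin{equation*}
  X_i \;\cong\; \bigoplus_{\lambda_s\vdash n_s,\ \mu\vdash n,\ \nu\vdash m} \widetilde V_{\lambda_t,\dots,\lambda_0,\mu,\nu}\otimes N_{\lambda_t,\dots,\lambda_0,\mu,\nu},
\end{equation*}
where $\widetilde V_{\lambda_t,\dots,\lambda_0,\mu,\nu}$ is the injective object of \Cref{eq:inj} and the multiplicity space is $N_{\lambda_t,\dots,\lambda_0,\mu,\nu} = \bigotimes_{s=t}^0 M_{\lambda_s}\otimes M_\mu\otimes M_\nu$ with $M_\lambda$ the Specht module indexed by $\lambda$.

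Next, I would pass to the socle. The display above exhibits $X_i$ as a direct sum of indecomposable injectives (\Cref{th.indec_inj-gen}), so $X_i$ is itself injective. By \Cref{le.ess} the socle of $\widetilde V_{\lambda_t,\dots,\lambda_0,\mu,\nu}$ is the simple $V_{\lambda_t,\dots,\lambda_0,\mu,\nu}$ of \Cref{eq:respsocle}, so
\begin{equation*}
  \soc X_i \;\cong\; \bigoplus_{\lambda_s,\mu,\nu} V_{\lambda_t,\dots,\lambda_0,\mu,\nu}\otimes N_{\lambda_t,\dots,\lambda_0,\mu,\nu}
\end{equation*}
and this inclusion is essential in $X_i$. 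Restriction to the socle then induces an isomorphism $\End(X_i)\xrightarrow{\sim}\End(\soc X_i)$: injectivity follows from essentialness together with the semi-artinianity of $\bT_{\aleph_t}$ (an endomorphism killing the socle has image with zero socle, hence zero image), and surjectivity follows from the injectivity of $X_i$.

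Finally, \Cref{pr.simples} says the simples $V_{\lambda_t,\dots,\lambda_0,\mu,\nu}$ are pairwise non-isomorphic with scalar endomorphism algebras, so Schur's lemma yields
\begin{equation*}
  \End(\soc X_i) \;\cong\; \bigoplus_{\lambda_s,\mu,\nu} \End_\bK N_{\lambda_t,\dots,\lambda_0,\mu,\nu} \;\cong\; \bigotimes_{s=t}^0 \bK S_{n_s}\otimes \bK S_n\otimes \bK S_m \;=\; \bK S_i,
\end{equation*}
where the second isomorphism is the classical Wedderburn decomposition $\bK S_p\cong\bigoplus_{\lambda\vdash p}\End_\bK M_\lambda$ applied factor by factor. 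This composite identification $\bK S_i\cong\End(X_i)$ is visibly the one induced by the permutation action of $S_i$, since Schur-Weyl identifies that action with the commutant of $\fgl^M$ on each isotypic component. The main obstacle is simply bookkeeping across the several Schur-Weyl decompositions; the conceptual content parallels \cite[Lemma 3.22]{us}.
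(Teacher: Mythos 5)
Your overall strategy is sound and gives the right answer, but the parenthetical justification for injectivity of the restriction map $\End(X_i)\to\End(\soc X_i)$ is not correct as stated. You write that ``an endomorphism killing the socle has image with zero socle, hence zero image,'' attributing this to essentialness of the socle plus semi-artinianity. That implication is false in general: for $\bK[x]/(x^2)$ over itself, multiplication by $x$ kills the socle $(x)$ yet has image $(x)$ with nonzero socle; more to the point, it fails whenever $\ext^1(S,S)\neq 0$ for a simple $S$, which neither essentialness nor semi-artinianity rule out. Indeed, if $\phi$ kills $\soc X_i$ and $Z:=\phi(X_i)\neq 0$, essentialness actually \emph{forces} $Z\cap\soc X_i\neq 0$, so $\soc Z\neq 0$, the opposite of what you assert.

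What makes the step work here is the ordering structure, not essentialness. Since $Z$ is a subobject of $X_i$, $\soc Z\subseteq\soc X_i$, so the isomorphism classes of simples in $\soc Z$ lie in $\cS_i$. On the other hand $Z$ is a quotient of $X_i/\soc X_i$, so every simple subquotient of $Z$ is a simple subquotient of $X_i$ not realized inside $\soc X_i$, hence by condition (iii) of \Cref{def.ordered} (verified in \Cref{pr.isordered}) lies in some $\cS_j$ with $j\prec i$. Since the sets $\cS_k$ are disjoint by condition (ii), this is a contradiction; thus $Z=0$. (Equivalently: by \Cref{th.exts} or \Cref{lem:blah}, no simple in $\soc X_i$ recurs in a higher layer of the socle filtration of $X_i$.) With this replacement, the rest of your argument --- Schur--Weyl decomposition into the $\widetilde V_{\lambda_t,\dots,\lambda_0,\mu,\nu}\otimes N_{\lambda_t,\dots,\lambda_0,\mu,\nu}$, surjectivity of restriction via injectivity of $X_i$, and the Artin--Wedderburn identification of $\End(\soc X_i)$ with $\bK S_i$ --- is correct, and is in the spirit of what the paper (which only cites \cite[Lemma 3.22]{us} and gives no details) has in mind.
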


 According to the
description of defect-one pairs $j\prec i$ given in the proof of \Cref{pr.form},
the morphisms that make up the degree-one component $\cA_1$ are
qualitatively of two types (matching the two bullet points in the proof of
\Cref{pr.form}):

\begin{itemize}
  \item morphisms $X_i\to X_{i_-}$;
  \item morphisms $X_i\to X_{i^s_{\pm}}$ for some $1\le s\le t$.
\end{itemize}

Corresponding examples of such morphisms are:
\begin{itemize}
\item the evaluation morphism $\phi_{p,q}:X_i\to X_{i_-}$ of the
  $p^{th}$ tensorand  $V^*$  against the $q^{th}$ tensorand $V$ for
  some choice of $1\le p\le n$ and $1\le q\le m$;
\item the surjection morphism $\pi^s_{p,q}:X_i\to X_{i^s_{\pm}}$ of the
  $p^{th}$  tensorand $V^*/V^*_{\al_{s-1}}$ onto the $q^{th}$ tensorand
  $V^*/V^*_{\al_{s}}$ for a choice of
  \begin{equation*}
    0\le s\le t,\quad 1\le p\le n_{s-1},\quad 0\le q\le n_{s}. 
  \end{equation*}
\end{itemize}
More precisely, $\pi^s_{p,q}$ first implements a surjection
$V^*/V^*_{\al_{s-1}}\to V^*/V^*_{\al_{s}}$ defined on the $p^{th}$
tensorand $V^*/V^*_{\al_{s-1}}$ in its domain, and then inserts the
result of that surjection as the $q^{th}$ tensorand of type
$V^*/V^*_{\al_{s}}$ in the codomain, without altering the order of the
other tensorands.

\begin{remark}\label{re.full}
  It follows from \cite[Lemma 2.19]{us} that the algebra $\mathcal{A}$ is generated by all evaluation morphisms $\phi_{p,q}$, all surjection morphisms $\pi^s_{p,q}$, and all permutations of tensorands of the objects $X_i$.
\end{remark}

In the following discussion we will often choose $i,s,p,q$ as above. In such a setting, the group $S_{i_-}$ will be
understood to be embedded into $S_i$ as the group of permutations
fixing
\begin{equation*}
  p\in \{1,\cdots,n\} \quad\text{and} \quad q\in \{1,\cdots,m\}. 
\end{equation*}
Similarly, we embed $S_i$ and $S_{i^s_{\pm}}$ into $S_{i^s_+}$ in the
obvious fashion.

\begin{lemma}\label{le.deg-1}
  Let $i=\left(n_t,\ldots,n_0,n,m\right)\in I$ and $s,p,q$ be as above.  Then, we have the following description for Hom-spaces of degree-one elements in $\cA$.
  \begin{enumerate}
    \renewcommand{\labelenumi}{(\alph{enumi})}
  \item The identification $\phi_{p,q}\mapsto 1$ extends to a bimodule
    isomorphism $\Hom(X_i,X_{i_-})\cong \bK S_i$, where both sides are
    equipped with standard bimodule structures over
    \begin{equation*}
      \End(X_{i_-})\cong \bK S_{i_-} \quad\text{and}\quad \End(X_i)\cong \bK S_i. 
    \end{equation*}
  \item The identification $\pi^s_{p,q}\mapsto 1$ extends to a
    bimodule isomorphism
    \begin{equation*}
      \Hom(X_i,X_{i^s_{\pm}})\cong \bK S_{i^s_+}\cong \mathrm{Ind}_{S_{n_s}}^{S_{n_s+1}}\bK S_i,
    \end{equation*}
    where both sides are equipped with standard bimodule structures
    over
    \begin{equation*}
      \End(X_{i^s_{\pm}})\cong \bK S_{i^s_{\pm}} \quad\text{and}\quad \End(X_i)\cong \bK S_i. 
    \end{equation*}
  \end{enumerate}
\end{lemma}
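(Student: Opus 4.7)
My strategy mirrors and extends the proof of \cite[Lemma 3.22]{us}, which established the analogous statement in the case $t=0$, $\alpha=\aleph_0$. For both (a) and (b), the approach is threefold: (i) exhibit spanning sets $\{\sigma\cdot\phi_{p,q}\cdot\tau\}$ (resp. $\{\sigma\cdot\pi^s_{p,q}\cdot\tau\}$) of the Hom-space, (ii) compute its total dimension to verify these are in fact bases, and (iii) inspect the left and right actions to identify the bimodule structure with the claimed group algebra $\bK S_i$ or induced module $\bK S_{i^s_+}$.

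The central prerequisite is a ``type-matching'' principle: morphisms between the tensor products $X_i$ must respect the structural type (one of $V^*/V^*_{\aleph_s}$, $V^*$, $V$) of each tensorand, in a way dictated by single-tensorand Hom computations. These computations are: $\End(V^*/V^*_{\aleph_s}) = \End(V^*) = \End(V) = \bK$ (indecomposable injectivity with simple socle, \Cref{th.indec_inj-gen}); $\Hom(V^*/V^*_{\aleph_{s-1}}, V^*/V^*_{\aleph_s}) = \bK$ is spanned by the canonical projection, since any such morphism must kill the source socle $V^*_{\aleph_s}/V^*_{\aleph_{s-1}}$ (which is not a subquotient of the target socle $V^*_{\aleph_{s+1}}/V^*_{\aleph_s}$) and hence factors through $V^*/V^*_{\aleph_s}$; all other cross-type Hom-spaces between single tensorands vanish by disjointness of simple constituents (\Cref{pr.simples}); and $\Hom(V^* \otimes V, \bK) = \bK$ is spanned by the evaluation $\mathbf{p}$ (the only admissible ``type-annihilating'' morphism, since the pairing does not descend to any quotient of $V^*$).

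Given type-matching, the assembly is combinatorial. For (a): a morphism $X_i \to X_{i_-}$ must bijectively match like-typed tensorands (possibly permuting within each class) except that one $V^*$ and one $V$ from the source must be contracted via $\mathbf{p}$; this yields the $nm$ basic morphisms $\phi_{p,q}$ indexed by contraction sites, and compositions with source-side and target-side permutations produce $|S_i|$ morphisms corresponding to coset representatives of $S_{i_-} \subset S_i$. For (b): the only allowed type-altering operation is the canonical projection applied to a single tensorand; choosing one of $n_{s-1}$ source positions at level $s-1$ and one of $n_s+1$ target positions at level $s$ gives $n_{s-1}(n_s+1)$ basic morphisms $\pi^s_{p,q}$, whose compositions with permutations of the remaining same-type tensorands form coset representatives of $S_i \subset S_{i^s_+}$. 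Direct inspection of left and right actions identifies the bimodule structures with the claimed $\bK S_i$ and $\bK S_{i^s_+} \cong \mathrm{Ind}_{S_{n_s}}^{S_{n_s+1}} \bK S_i$.

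The main technical obstacle is the rigorous justification of type-matching for multi-tensorand morphisms: ruling out unexpected ways morphisms could weave tensorands of different types together beyond what the single-tensorand calculations force. I would handle this by induction on the total number $\sum_s n_s$ of quotient tensorands. The base case (no quotient tensorands) reduces to computing Hom-spaces between tensor products of $V^*$ and $V$, which is the Schur--Weyl-type description for $\fgl^M(V_*, V)$: by density of $V_* \otimes V \subset \fgl^M$ combined with the First Fundamental Theorem for $\fgl(n)$, such Hom-spaces are spanned by evaluations composed with permutations, as in \cite[Section 4]{PS2} and \cite[Theorem 2.2]{PS}. The induction step grafts on one quotient tensorand $V^*/V^*_{\aleph_s}$ at a time, using the short exact sequence
\begin{equation*}
    0 \to V^*_{\aleph_{s+1}}/V^*_{\aleph_s} \to V^*/V^*_{\aleph_s} \to V^*/V^*_{\aleph_{s+1}} \to 0
\end{equation*}
together with the injectivity of the target $X_{i_-}$ (resp. $X_{i^s_\pm}$) to split the resulting long exact Hom-sequences into short exact ones, and transports the dimension count through. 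A matching count against $|S_i|$ or $|S_{i^s_+}|$ then forces the candidate spanning sets to be bases.
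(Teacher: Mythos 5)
Your strategy differs from the paper's. The paper derives surjectivity of the bimodule maps $\bK S_i\to\Hom(X_i,X_{i_-})$ and $\bK S_{i^s_+}\to\Hom(X_i,X_{i^s_\pm})$ directly from \Cref{re.full} (itself imported from \cite[Lemma 2.19]{us}), and then refers the reader to \cite[Lemma 3.24]{us} for injectivity, which it says ``proceeds virtually identically.'' You instead attempt a self-contained build-up from single-tensorand $\Hom$ computations plus a dimension count; this is a legitimate alternative in spirit, and your individual single-tensorand calculations (such as $\End(V^*/V^*_{\aleph_s})=\bK$ and $\Hom(V^*/V^*_{\aleph_{s-1}},V^*/V^*_{\aleph_s})=\bK$) are correct and well supported by socle arguments.

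However, the step you yourself flag as the main technical obstacle---justifying multi-tensorand type-matching---has a real gap in the proposed fix. The induction on $\sum_s n_s$ does not decrease when you apply the short exact sequence $0 \to V^*_{\aleph_{s+1}}/V^*_{\aleph_s} \to V^*/V^*_{\aleph_s} \to V^*/V^*_{\aleph_{s+1}} \to 0$ to one tensorand of $X_i$: in the resulting sequence $0\to A\to X_i\to B\to 0$, the object $B$ retains the same total number of quotient tensorands (one is merely pushed to a higher level), while $A$ acquires a \emph{simple} tensorand $V^*_{\aleph_{s+1}}/V^*_{\aleph_s}$ that is not one of the allowed tensorand types, so $A$ is not an $X_{i'}$ and the inductive hypothesis says nothing about $\Hom(A,X_j)$. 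To make the induction close you would need a more refined termination argument (e.g.\ a lexicographic induction on the multiset of levels of quotient tensorands, exploiting that $V^*/V^*_{\aleph_t}$ is already simple once $\alpha=\aleph_t$) together with separate $\Hom$ computations out of tensor products containing simple tensorands. There is also a small inaccuracy in the statement ``all other cross-type Hom-spaces between single tensorands vanish'': $\Hom(V^*,V^*/V^*_{\aleph_s})$ is one-dimensional for every $s$ via the canonical projection, and more generally the higher-defect cross-type maps do not vanish; they are merely irrelevant to the degree-one component of $\cA$, and that restriction should be made explicit if you mean to deduce a spanning set for $\cA_1$ from it.
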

\begin{proof}
  The identifications in question give rise to morphisms of bimodules
  \begin{equation*}
    \bK S_i\to \Hom(X_i,X_{i_-})\text{ and } \bK S_{i^s_{++}}\to \Hom(X_i,X_{i^s_{\pm}})
  \end{equation*}
  which are surjective by \Cref{re.full}. The proof that the maps are also
  injective proceeds virtually identically to the corresponding
  argument in the proof of \cite[Lemma 3.24]{us}.
\end{proof}

The following result provides a piecewise description of the
degree-two component of the tensor algebra of $\cA_1$ over $\cA_0$. It
parallels and generalizes \cite[Lemma 3.25]{us}, and it follows
routinely from the identifications made in
\Cref{le.deg-1}.

Since we have to apply the operations $i\mapsto i_-$,
$i\mapsto i^s_{\pm}$ and $i^s_+$ repeatedly, we will simply
concatenate superscripts and $\pm$ subscripts. Note that with this
notation, we have $i^s_{-\pm}=i^s_{\pm -}$.

\begin{lemma}\label{le.deg-2}
  Let $i\in I$. The tensor products of the
  spaces described in \Cref{le.deg-1} are as follows.
  \begin{enumerate}
        \renewcommand{\labelenumi}{(\alph{enumi})}
      \item The space
        \begin{equation}
          \label{eq:1}
          \Hom(X_{i_-},X_{i_{--}})\otimes_{\End(X_{i_-})}\Hom(X_i,X_{i_-})
        \end{equation}
        is isomorphic to $\bK S_i$ as an $(S_{i_{--}},S_i)$-bimodule.
      \item The spaces
        \begin{equation}
          \label{eq:2}
          \Hom(X_{i_-},X_{i^s_{-\pm}})\otimes_{\End(X_{i_-})}\Hom(X_i,X_{i_-})
        \end{equation}
        and
        \begin{equation}
          \label{eq:3}
          \Hom(X_{i^s_{\pm}},X_{i^s_{\pm-}})\otimes_{\End(X_{i^s_{\pm}})}\Hom(X_i,X_{i^s_{\pm}})
        \end{equation}
        are both isomorphic to
        \begin{equation*}
          \bK S_{i^s_+} \cong \mathrm{Ind}_{S_{n_s}}^{S_{n_s+1}}\bK S_i
        \end{equation*}
        as $(S_{i^s_{\pm-}},S_i)$-bimodules.
      \item Now let $0\le r, s\le t$ and set $j=i^{rs}_{\pm\pm}$. Then the space
        \begin{equation}
          \label{eq:4}
          \Hom(X_{i^r_{\pm}},X_j)\otimes_{\End(X_{i^r_{\pm}})}\Hom(X_i,X_{i^r_{\pm}})
        \end{equation}
        is isomorphic to $\bK S^{rs}_{++}$ as an $(S^{rs}_{++},S_i)$-bimodule.
        \qedhere
  \end{enumerate}
\end{lemma}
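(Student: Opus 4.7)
The proof will closely follow the template of \cite[Lemma 3.25]{us}, adapting it to accommodate the longer poset $(I,\preceq)$. The central observation is that, after applying \Cref{le.deg-1} to both tensorands, each of the three spaces (a)--(c) becomes a tensor product of the form $\bK G_1 \otimes_{\bK H} \bK G_2$ for suitable symmetric groups $H \subseteq G_1\cap G_2$. I will use throughout the elementary fact that if $G_1,G_2$ are subgroups of a group $G$ with $G_1\cap G_2=H$ and $G_1\cdot G_2=G$ as a set, then the multiplication map induces a $(G_1,G_2)$-bimodule isomorphism $\bK G_1\otimes_{\bK H}\bK G_2\cong \bK G$; this follows from the bijectivity of $G_1\times_H G_2\to G,\ (g_1,g_2)\mapsto g_1g_2,$ together with the Lagrange identity $|G_1||G_2|/|H|=|G_1G_2|$.

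Applying this principle, the plan for each case is as follows. In (a) the tensor product collapses to $\bK S_{i_-}\otimes_{\bK S_{i_-}}\bK S_i\cong \bK S_i$, with the residual left $S_{i_{--}}$-action factoring through the natural chain $S_{i_{--}}\subseteq S_{i_-}\subseteq S_i$. In (b), the first expression is $\bK S_{(i_-)^s_+}\otimes_{\bK S_{i_-}}\bK S_i$; a tuple-wise minimum/maximum calculation shows that the subgroups $S_{(i_-)^s_+}$ and $S_i$ of $S_{i^s_+}$ intersect in $S_{i_-}$ and multiply to all of $S_{i^s_+}$, so the general fact yields the required identification. The second expression in (b) is transparent because the tensor product is taken over the full endomorphism algebra of the middle object, so it reduces immediately to $\bK S_{i^s_+}$. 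Part (c) is handled identically: one sets $G_1=S_{(i^r_\pm)^s_+}$ and $G_2=S_{i^r_+}$, checks that the tuple-wise minimum recovers $H=S_{i^r_\pm}$, and verifies by counting factorials that the set-product $G_1G_2$ exhausts $S_{i^{rs}_{++}}$.

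The main obstacle will be the bookkeeping needed to reconcile the chosen embeddings of symmetric groups into each other with the composition law among the generators $\phi_{p,q}$ and $\pi^s_{p,q}$. Specifically, I must verify that under the identifications of \Cref{le.deg-1}, concatenation of an evaluation or surjection with a permutation corresponds on the group-algebra side to ordinary left or right multiplication along the fixed embeddings $S_{i_-}\hookrightarrow S_i$ (as the stabilizer of the contracted pair $(p,q)$) and $S_{i^s_\pm}\hookrightarrow S_{i^s_+}$ (from $S_{n_s}\hookrightarrow S_{n_s+1}$). The key point is that altering the choice of $p$ or $q$ amounts to conjugation by the corresponding transposition, so the identifications are canonical up to an inner automorphism that is absorbed into the bimodule structure. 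Once this compatibility has been pinned down, all three claims follow from the dimension and bijectivity considerations indicated above.
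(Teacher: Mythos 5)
Your overall strategy---translating each Hom-space into a group algebra via \Cref{le.deg-1}, and then invoking the elementary bimodule fact that $\bK G_1\otimes_{\bK H}\bK G_2\cong\bK G$ whenever $G_1\cap G_2=H$ and $G_1G_2=G$---is exactly the ``routine'' computation the paper leaves unspoken (it offers no argument beyond the remark that the lemma ``follows routinely from the identifications made in \Cref{le.deg-1}''). You carry it out correctly for (a), for both expressions in (b), and for the generic subcase of (c), and your bookkeeping remarks about how the embeddings of symmetric groups depend only on a choice absorbed by conjugation are consistent with \Cref{cv.ppi}.

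There is, however, a genuine gap in part (c). The step ``checks that the tuple-wise minimum recovers $H=S_{i^r_\pm}$'' fails when $s=r-1$. In that case the increment at position $r-1$ built into $(i^r_\pm)^{s}_+$ exactly undoes the earlier decrement, so $(i^r_\pm)^{r-1}_+=i^r_+$; thus $G_1=S_{(i^r_\pm)^{s}_+}=S_{i^r_+}=G_2$, and their intersection is $S_{i^r_+}$, which properly contains $H=S_{i^r_\pm}$. Your elementary fact therefore does not apply, and a dimension count confirms that the conclusion cannot hold as you (and the lemma) state it: the quantity $|G_1||G_2|/|H|$ carries the factor $n_{r-1}\cdot n_{r-1}!$ at the position indexed $r-1$, whereas $|S_{i^{rs}_{++}}|$ carries $(n_{r-1}+1)!$ there, a mismatch by $n_{r-1}/(n_{r-1}+1)$. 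This is not an idle degenerate case: it is precisely the configuration used for the summand factoring through $X_{i^{s+1}_\pm}$ in the proof of \Cref{le.rel}(c)(2), so a careful proof of \Cref{le.deg-2} must either flag an implicit restriction on $(r,s)$ or replace the claimed isomorphism with the correct (strictly smaller) module when $s=r-1$. Your proposal claims the check ``is handled identically,'' which glosses over the failure; as written it would not survive the very tuple-wise verification you say you perform.
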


Next, we describe the subspaces of the tensor products from
\Cref{le.deg-2} that are annihilated upon composing morphisms in
$\cA$. We have three types of relations, corresponding to parts (a) to
(c) of \Cref{le.deg-2}.


\begin{convention}\label{cv.ppi}
  We drop the $p$ and $q$ subscripts from the morphisms $\phi_{p,q}$
  and $\pi^s_{p,q}$ when the latter are understood to involve only the
  rightmost relevant tensorands. So for instance,
  $\phi:X_i\to X_{i_-}$ is the evaluation of the $n^{th}$
  tensorand $V^*$ against the $m^{th}$ tensorand $V$ in $X_i$.

  The generators of the spaces in parts (a) and (b) of \Cref{le.deg-1}
  are always assumed (unless specified otherwise) to be $\phi$ and
  $\pi$ respectively.
\end{convention}

\begin{notation}\label{not.theta}
  For $i\in I$ and $j=i^{rr'}_{\pm\pm}$ for some
  $r,r'\in \{0,\cdots,t\}$, we denote by $\Theta_{i,j}$ the set of
  those $s$ for which
  \begin{equation*}
    d(j,i^s_\pm) = d(i^s_\pm,i) = 1. 
  \end{equation*}
\end{notation}

In other words, $\Theta_{i,j}$ is the set of all $s$ for which
$X_{i^s_{\pm}}$ can appear as an intermediate object for morphisms
$X_i\to X_j=X_{i^{rr'}_{\pm\pm}}$ obtained by composition from tensor
products as in part (c) of \Cref{le.deg-2}.

\begin{lemma}\label{le.rel}
  Let $i=\left(n_t,\ldots,n_0,n,m\right)\in I$. The degree-two relations of
  $\cA$ can be described as follows.
  \begin{enumerate}
    \renewcommand{\labelenumi}{(\alph{enumi})}
  \item The map
    \begin{equation*}
      \Cref{eq:1}\to \Hom(X_i,X_{i_{--}})
    \end{equation*}
    is surjective, with kernel generated by
    \begin{equation*}
      \phi\otimes\phi -(\phi\otimes\phi)\circ(n,n-1)(m,m-1)
    \end{equation*}
    as an $(S_{i_{--}},S_i)$-bimodule, where $(n,n-1)$ and
    $(m,m-1)$ are transpositions in the factors $S_{n}$ and $S_m$
    respectively of
    \begin{equation*}
      S_i = S_{n_t}\times \cdots\times S_{n_0}\times S_{n}\times S_m. 
    \end{equation*}
  \item The map
    \begin{equation*}
      \Cref{eq:2}\oplus \Cref{eq:3}\to \Hom(X_i,X_{i^s_{-\pm}})
    \end{equation*}
    is surjective, with kernel generated by
    \begin{equation*}
      \pi^s\otimes\phi - (\phi\otimes\pi^s)\circ \tau
    \end{equation*}
    as an $(S_{i^s_{\pm-}},S_i)$-bimodule, where
    \begin{equation*}
      \tau=(n,n-1)\in S_{n}\subset S_i
    \end{equation*}
    if $s=0$ and $\tau=\mathrm{id}$ otherwise.
  \item Let $j=i^{ss'}_{\pm\pm}$ for some $0\le s\le s'\le t$. The map
    \begin{equation}\label{eq:comp}
      \bigoplus_{r\in \Theta_{i,j}}\Cref{eq:4}\to \Hom(X_i,X_j)
    \end{equation}
    is a surjective morphism of $(S_j,S_i)$-bimodules, and we have the
    following cases:
    \begin{enumerate}
      \renewcommand{\labelenumii}{(\arabic{enumii})}
    \item if $s'-s\ge 2$ then $\Theta_{i,j}=\{s,s'\}$, and the
      kernel of the bimodule map \Cref{eq:comp} is generated by
      \begin{equation*}
        \pi^s\otimes\pi^{s'}-\pi^{s'}\otimes \pi^s; 
      \end{equation*}
    \item if $s'=s+1$ then $\Theta_{i,j}=\{s,s+1\}$ again, and the
      kernel of \Cref{eq:comp} is generated by
      \begin{equation}\label{eq:ss1}
        \pi^s\otimes\pi^{s+1}-\sigma\circ (\pi^{s+1}\otimes\pi^s), 
      \end{equation}
      where
      \begin{equation*}
        \sigma=(n_s+1,n_s)\in S_{n_s+1}\subset S_{i^{ss'}_{++}}=S_{n_t}\times\cdots\times S_{n_{s+1}+1}\times S_{n_s+1}\times\cdots \times S_{n_0}\times S_n\times S_m;
      \end{equation*}      
    \item if $s=s'$ then $\Theta_{i,j}=\{s\}$, and the kernel of
      \Cref{eq:comp} is generated by
      \begin{equation*}
        \sigma\circ(\pi^s\otimes\pi^s)-(\pi^s\otimes \pi^s)\circ\tau,
      \end{equation*}
      where
      \begin{equation*}
        \tau=(n_{s-1},n_{s-1}-1)\in S_{n_{s-1}}\subset S_i = S_{n_t}\times\cdots \times S_{n_0}\times S_{n}\times S_m
      \end{equation*}
      and
      \begin{equation*}
        \sigma=(n_s+2,n_s+1)\in S_{n_s+2}\subset S_{i^{ss}_{++}} = S_{n_t}\times\cdots\times S_{n_s+2}\times\cdots\times S_{n_0}\times S_{n}\times S_m.
      \end{equation*}      
    \end{enumerate}
  \end{enumerate}
\end{lemma}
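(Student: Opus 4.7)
The plan is to verify each stated relation in two stages: first, check directly that every displayed element lies in the kernel of composition in $\mathcal{A}$; second, establish that these elements generate the entire kernel as bimodules.

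For the verification step, each relation is a diagram-chase in the monoidal category $\bT_{\aleph_t}$. For (a), the two compositions $\phi\otimes\phi$ and $(\phi\otimes\phi)\circ (n,n-1)(m,m-1)$ both describe the morphism $X_i \to X_{i_{--}}$ that simultaneously contracts the last two $V^{*}$-tensorands against the last two $V$-tensorands; the transposition merely undoes the cosmetic relabelling forced by the ``rightmost first'' convention of \Cref{cv.ppi}. For (b), when $s \geq 1$ the morphisms $\phi$ and $\pi^{s}$ act on different types of tensorands ($V^{*}$ versus $V^{*}/V^{*}_{\aleph_{s-1}}$) and commute outright, while for $s = 0$ they compete for the rightmost $V^{*}$-tensorand and the relation records the resulting transposition $(n,n-1)$. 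Case (c1) is immediate, since for $|s-s'|\geq 2$ the source and target types of $\pi^{s}$ and $\pi^{s'}$ are disjoint layers of the filtration \eqref{eq.transfilt}. Cases (c2) and (c3) reduce similarly, to tracking how the newly created tensorands from successive surjections are ordered in the codomain.

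For the generation step, surjectivity of each displayed map onto the relevant $\Hom(X_i,X_{?})$ follows from \Cref{re.full}. To control the kernel, I would invoke Koszulity of $\mathcal{A}$ (from \Cref{cor.bigone}(d) together with \Cref{th.kosz-gen}), which guarantees that all relations of $\mathcal{A}$ are quadratic, so it suffices to match bimodule dimensions in degree two. Concretely, using the identifications in \Cref{le.deg-1,le.deg-2}, the left-hand bimodule is a sum of free $\mathbb{K}S_{?}$-modules of explicit rank expressible via induction from various symmetric subgroups, while the right-hand $\Hom$-space can be computed from \Cref{th.indec_inj-gen} together with classical Schur--Weyl reciprocity for the symmetric groups. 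One then verifies that the $(S_{?},S_i)$-bimodule generated by the displayed element has exactly the correct dimension to saturate the difference.

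The main obstacle, I expect, is case (c2), where $s'=s+1$. Here the left-hand side is a direct sum of \emph{two} tensor products (indexed by $r=s$ and $r=s+1$), and the transposition $\sigma = (n_s+1,n_s)$ in \eqref{eq:ss1} arises because the two induction patterns $\mathrm{Ind}_{S_{n_s}}^{S_{n_s+1}}$ (from $\pi^{s}$ applied second) and $\mathrm{Ind}_{S_{n_{s+1}}}^{S_{n_{s+1}+1}}$ (from $\pi^{s+1}$ applied second) yield the same bimodule $\mathbb{K}S_{i^{ss'}_{++}}$ but with natural generators differing by $\sigma$. Reconciling these two patterns carefully, including the degenerate situations where some $n_s$ or $n_{s-1}$ is small, is the combinatorial core of the argument. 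The analogue in the countable-dimensional setting is \cite[Lemma 3.25]{us}; the extension to $\aleph_t$ amounts to bookkeeping across the full family $\{\pi^{s}\}_{0 \leq s \leq t}$ rather than the single surjection appearing when $\alpha=\aleph_0$.
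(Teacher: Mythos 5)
Your high-level plan coincides with the paper's: show the displayed elements obviously lie in the kernel, then establish that they generate by a dimension count; invoke Koszulity/generation-in-degree-one for surjectivity; and single out case (c)(2) as the genuinely new ingredient, with (a), (b), (c)(3) reducing to the arguments of \cite{us} (Lemma 3.26 there). That identification of (c)(2) as the crux is exactly where the paper focuses.

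However, the decisive step is left under-specified in a way that hides the real work. For the ``generation'' half you write that one should invoke Koszulity (so relations are quadratic) and then ``match bimodule dimensions in degree two'' by computing $\Hom(X_i,X_j)$ via \Cref{th.indec_inj-gen} and ``classical Schur--Weyl reciprocity.'' Koszulity certainly reduces the problem to degree two, but it does not by itself produce a dimension count: one still has to know (or bound) $\dim \Hom(X_i,X_j)$, and a vague appeal to Schur--Weyl reciprocity is not how this is done and would be delicate to make precise here, since $X_i$ and $X_j$ are tensor products of \emph{distinct} objects $V$, $V^*$, $V^*/V^*_{\aleph_r}$, not powers of a single space. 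What the paper actually does in case (c)(2) is: (i) identify the two summands of the source with $(\bK S_{i^{ss'}_{++}})^{\oplus 2}$ via \Cref{le.deg-2}; (ii) observe that the bimodule generated by $1 \oplus (-\sigma)$ has dimension exactly $\dim \bK S_{i^{ss'}_{++}}$; and (iii) prove the \emph{lower} bound $\dim\Hom(X_i,X_j) \ge \dim \bK S_{i^{ss'}_{++}}$ by an explicit construction, namely exhibiting $(n_s+1)!$ composite morphisms (surject the rightmost $V^*/V^*_{\aleph_{s-1}}$, permute the block of $V^*/V^*_{\aleph_s}$-tensorands, surject again) and checking on a filtration-compatible basis that they span a free $(S_{n_{s+1}+1}, S_{n_{s-1}})$-sub-bimodule. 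This explicit lower bound is the combinatorial core you flag as ``the main obstacle,'' but your proposal does not supply an argument for it; replacing the paper's construction with a reference to Schur--Weyl reciprocity is the gap.
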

\begin{proof}
  Surjectivity follows in all cases from the fact that $\cA$ is
  generated in degree one; in turn, this is a consequence of
  \Cref{th.kosz-gen}.

  As for the description of the kernels, we will prove one of the
  several cases listed in the statement, the rest being entirely
  analogous.
  Note that the types of compositions covered by parts (a) and (b) of
  the lemma are qualitatively similar to those in \cite[Lemma 3.26,
  (a) and (b)]{us} ((a) refers to compositions of evaluations while
  (b) refers to composing evaluations and surjections with source and
  target of the form $V^*/V^*_{\bullet}$). For this reason, we focus
  on part (c). Point (3) of the latter is analogous to \cite[Lemma
  3.26 (c)]{us}, so the new phenomena occur in parts (1) and (2) of
  (c). Finally, we will tackle part (2) as representative of the
  essence of the argument.

  Since the kernel of \Cref{eq:comp} is easily seen to contain the
  $(S_j,S_i)$-bimodule generated by \Cref{eq:ss1}, it suffices to
  prove the opposite containment. As in \cite[Lemma 3.26]{us} we do
  this by a dimension count, i.e. by proving a lower
  bound on the dimension of the image $\Hom(X_i,X_j)$ of
  \Cref{eq:comp}.
  
  The two summands of \Cref{eq:ss1} belong respectively to the two direct
  summands of
  \begin{equation*}
    \left(\Hom(X_{i^{s+1}_{\pm}},X_j)\otimes
      \Hom(X_i,X_{i^{s+1}_{\pm}})\right)\oplus \left(\Hom(X_{i^s_{\pm}},X_j)\otimes\Hom(X_i,X_{i^s_{\pm}})\right), 
  \end{equation*}
  which upon making the identification from point (c) of
  \Cref{le.deg-2} is isomorphic to
  \begin{equation*}
    \left(\bK S_{i^{ss'}_{++}}\right)^{\oplus 2} = \bK [S_{n_t}\times\cdots \times S_{n_{s+1}+1}\times S_{n_s+1}\times\cdots\times S_{n_0}\times S_n\times S_m]^{\oplus 2}. 
  \end{equation*}
  Under this identification, \Cref{eq:ss1} is simply
  $1\oplus (-\sigma)$, with $\sigma=(n_s+1,n_s)$ in the $S_{n_s+1}$
  factor of the second $\bK S_{i^{ss'}_{++}}$ summand.

  The $(S_j,S_i)$-bimodule generated by $1\oplus(-\sigma)$ coincides
  with the left $S_{i^{ss'}_{++}}$-module generated by it, and hence
  has dimension
  \begin{equation*}
    \mathrm{dim} \left(\bK S_{i^{ss'}_{++}}\right) = n_t!\cdots (n_{s+1}+1)! (n_s+1)!\cdots n_0!n!m!
  \end{equation*}
  It thus suffices to show that the dimension of the image
  $\Hom(X_i,X_j)$ of \Cref{eq:comp} is at least
  \begin{equation*}
    \mathrm{dim} \left(\bK S_{i^{ss'}_{++}}\right)^{\oplus 2}-\mathrm{dim} \left(\bK S_{i^{ss'}_{++}}\right) =
    \mathrm{dim} \left(\bK S_{i^{ss'}_{++}}\right).
  \end{equation*}
  Since the tuples $i$ and $j$ only differ in their index-$(s+1)$ and
  index-$(s-1)$-entries, we can argue as in \Cref{le.deg-0,le.deg-1}
  that we have
  \begin{equation*}
    \Hom(X_i,X_j)\cong \bK[S_{n_t}\cdots S_{n_{s+2}}\times S_{n_{s-2}}\cdots S_{n_0}\times S_{n}\times S_m]\otimes \Hom(X_{i'},X_{j'}),
  \end{equation*}
  where $i'$ and $j'$ are the tuples obtained from $i$ and $j$
  respectively by substituting zeros for $m$ and $n_r$,
  $r\not\in\{s,s\pm 1\}$. We can thus assume that the only possibly
  nonzero entries in $i$ and $j$ are those with indices $s-1$, $s$
  and $s+1$; in other words, we suppose that
  \begin{equation*}
    i=(0,\cdots,n_{s+1},n_s,n_{s-1},\cdots,0),\quad j=(0,\cdots,n_{s+1}+1,n_s,n_{s-1}-1,\cdots,0).
  \end{equation*}
  In this setting, we then have to prove
  \begin{equation}\label{eq:dim-ct}
    \Hom(X_i,X_j)\ge (n_{s+1}+1)! (n_{s}+1)! (n_{s-1})!
  \end{equation}
  Now consider the $(n_s+1)!$ elements of $\Hom(X_i,X_j)$ obtained as
  follows:
  \begin{itemize}
  \item surject the rightmost tensorand $V^*/V^*_{\aleph_{s-1}}$ onto the rightmost tensorand $V^*/V^*_{\aleph_s}$;
  \item apply a permutation in $S_{n_s+1}$ to the resulting tensorand $(V^*/V^*_{\aleph_s})^{\otimes (n_s+1)}$;
  \item surject the rightmost tensorand $V^*/V^*_{\aleph_{s}}$ onto the
    rightmost tensorand $V^*/V^*_{\aleph_{s+1}}$.
  \end{itemize}
  By choosing a basis for $V^*$ compatible with the filtration by the
  $V^*_{\aleph_r}$ and examining the action of $\Hom(X_i,X_j)$ on the
  resulting tensor product basis, it is easy to see that the
  $(n_s+1)!$ morphisms we have just described span a free
  $(S_{n_{s+1}+1},S_{n_{s-1}})$-sub-bimodule of $\Hom(X_i,X_j)$. This
  proves the desired dimension count \Cref{eq:dim-ct}.
\end{proof}

\begin{proof-of-univ}
  We follow the same strategy as in the proof of \cite[Theorem
  3.22]{us}.

  First, note that the conclusion would follow from an application of
  \cite[Theorem 2.22]{us} provided we can show that we can define a
  symmetric monoidal functor (also denoted by $\mathcal{F}$ by a slight
  notational abuse) to $\cD$ from the tensor category with objects
  $X_i$ and Hom-spaces generated by the morphisms from \Cref{le.deg-1}.

  Note that such a tensor functor is constrained up to tensor natural
  isomorphism by the requirements of the statement:
  \begin{equation*}
    \mathcal{F}(V)=B,\quad \mathcal{F}\left(V^*_s\right) = A_s,\quad \mathcal{F}(V^*\otimes V\to \bK) = \mathbf{q}, 
  \end{equation*}
  as well as
  \begin{equation*}
    \mathcal{F}(V^*/V^*_{\aleph_{s-1}}\to V^*/V^*_{\aleph_{s}}) = A/A_{s-1}\to A/A_{s}\,.
  \end{equation*}

  The only thing to check is that such a definition is consistent,
  i.e. that the relations satisfied in $\bT_{\aleph_t}$ by the
  morphisms in \Cref{le.deg-1} are satisfied by the corresponding
  morphisms in $\cD$.

  The Koszulity result proven in \Cref{th.kosz-gen}, reinterpreted via
  Corollary \ref{cor.bigone}(d) and \Cref{re.coalg-choice}, shows that it suffices to prove
  that the quadratic relations described by \Cref{le.rel} are
  satisfied in $\mathcal{D}$.  But this follows directly from the definition of $\mathcal{D}$ as a tensor category.
\end{proof-of-univ}

The next lemma is a small amplification of \Cref{th.univ}
providing necessary and sufficient conditions for the functors $\mathcal{F}$
therein to be exact (rather than only left exact).

\begin{lemma}\label{le.univ}
  The functor $\mathcal{F}:\bT_{\aleph_t}\rightsquigarrow \cD$ from \Cref{th.univ} is exact
  if and only if its restriction to the full subcategory on the
  indecomposable injective objects \Cref{eq:inj} of $\bT_{\aleph_t}$
  is exact.
\end{lemma}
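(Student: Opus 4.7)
The ``only if'' direction is immediate. For ``if'', assume that the restriction of $\mathcal{F}$ to the full subcategory $\cI$ spanned by the indecomposable injectives \eqref{eq:inj} is exact; I read this as the statement that $\mathcal{F}$ preserves the canonical epi--mono factorization $I\twoheadrightarrow \mathrm{image}(\varphi)\hookrightarrow J$ of every morphism $\varphi\colon I\to J$ between objects of $\cI$. The monic part is automatic from the left exactness granted by \Cref{th.univ}(a), so the substantive content is preservation of the epic part (equivalently, of the cokernel). By additivity this propagates to morphisms between finite direct sums of objects of $\cI$.

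Since $\mathcal{F}$ is already left exact, proving exactness amounts to showing that $R^k\mathcal{F}(M)=0$ for every $M\in\bT_{\aleph_t}$ and every $k\ge 1$. Pick an injective resolution
$$
0\to M\to I^0\xrightarrow{d^0} I^1\xrightarrow{d^1} I^2\to\cdots
$$
with each $I^k$ a finite direct sum of indecomposable injectives of the form \eqref{eq:inj}; such a resolution exists because, by \Cref{pr.quot_filt}, each such indecomposable injective has finite length and therefore lies in $\bT_{\aleph_t}$, so injective hulls of finite-length modules remain inside $\bT_{\aleph_t}$. Setting $Z^k:=\ker d^k$ (with $Z^0=M$), each $d^k$ factors as $I^k\twoheadrightarrow Z^{k+1}\hookrightarrow I^{k+1}$.

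The hypothesis applied to $d^k$ identifies the image of $\mathcal{F}(d^k)$ inside $\mathcal{F}(I^{k+1})$ with $\mathcal{F}(Z^{k+1})$, embedded via the monomorphism $\mathcal{F}(Z^{k+1})\hookrightarrow \mathcal{F}(I^{k+1})$ provided by left exactness; left exactness applied to $d^{k+1}$ identifies the same $\mathcal{F}(Z^{k+1})$ with $\ker(\mathcal{F}(d^{k+1}))$. Hence $\mathcal{F}(I^\bullet)$ is exact in positive degrees, giving $R^k\mathcal{F}(M)=0$ for every $k\ge 1$ and therefore exactness of $\mathcal{F}$. The one delicate point is the reading of the hypothesis: taken literally as ``preserves short exact sequences of indecomposable injectives'' it would be vacuous, since any such sequence must split (the left term being injective); the right reading, and the one that actually powers the acyclicity of $\mathcal{F}(I^\bullet)$, is the preservation of images of morphisms in the additive closure of $\cI$.
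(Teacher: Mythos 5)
Your proof is correct and follows essentially the same route as the paper's: both use the characterization of the right derived functors $R^p\mathcal{F}$ via injective resolutions, observing that the hypothesis forces $\mathcal{F}(I^\bullet)$ to be exact in positive degrees and hence $R^p\mathcal{F}=0$ for $p\ge 1$. The paper's version is terser (it simply asserts that the hypothesis "ensures that the injective resolution is turned by $\mathcal{F}$ into an exact sequence"), while you unpack exactly how left exactness plus image-preservation between sums of indecomposable injectives combine to give this; the closing remark about how to read "exact" on the non-abelian full subcategory $\cI$ (preservation of epi--mono factorizations rather than the vacuous statement about short exact sequences of injectives) is a genuine point of care that the paper leaves implicit. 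One small citation quibble: finite length of the indecomposable injectives is more directly Proposition~\ref{prop.finordcat} or Corollary~\ref{cor.bigone} than Proposition~\ref{pr.quot_filt}.
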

\begin{proof}
  One implication (exactness of $\mathcal{F}$ $\Rightarrow$ exactness of its
  restriction) is immediate, whereas the other one follows from the
  description of the right derived functors $R^*\mathcal{F}$ via injective
  resolutions in $\bT_{\aleph_t}$: the hypothesis ensures that the
  injective resolution
  \begin{equation*}
    x\to J^0\to J^1\to\cdots
  \end{equation*}
  of any object in $\bT_{\aleph_t}$ is turned by $\mathcal{F}$ into an exact
  sequence, hence the conclusion that the higher derived functors
  $R^p\mathcal{F}(x)=H^p\big(\mathcal{F}(J^\bullet)\big)$ vanish.
\end{proof}


Finally, we are ready for the proof of \Cref{th.univ-gen}.

\begin{proof-of-univ-gen}
  For each tuple ${\bf s}$
  \begin{equation*}
    \beta_0<\cdots<\beta_t\le \alpha
  \end{equation*}
  of infinite cardinal numbers \Cref{th.univ} provides a functor
  $\bT_{\aleph_t}\rightsquigarrow \bT_{\alpha}$ that easily seen to be
  a full exact embedding (exactness follows from \Cref{le.univ}).

  Applying \Cref{th.univ} to all such finite-length subcategories
  $\bT_{\bf s}\cong \bT_{\aleph_t}\subseteq \bT_\alpha$ we obtain
  functors $\mathcal{F}_{\bf s}:\bT_{\bf s}\rightsquigarrow \cD$
  turning $V^*$ into $A$, the filtration
  \begin{equation*}
    V^*_{\beta_0}\subset \cdots\subset V^*_{\beta_t}
  \end{equation*}
into 
\begin{equation*}
  A_{\beta_0}\subseteq \cdots\subseteq A_{\beta_t}
\end{equation*}
and the pairing $V^*\otimes V\to \bK$ into $\mathbf{q}$.

We regard tuples ${\bf s}$ as forming a directed system by
inclusion. The assignment ${\bf s}\to \bT_{\bf s}$ is increasing in
the sense that if ${\bf s'}$ contains ${\bf s}$ then
$\bT_{\bf s}\subset \bT_{\bf s'}$, and $\bT_{\alpha}$ can be expressed
as the direct limit
\begin{equation}\label{eq:t-unn}
  \bT_{\alpha}=\lim_{\substack{{\longrightarrow}\\{s}}} \bT_{\bf s}. 
\end{equation}

The uniqueness statements in \Cref{th.univ} ensure that the functors
$\mathcal{F}_{\bf s}$ are compatible, in the sense that
$\mathcal{F}_{\bf s}$ and $\mathcal{F}_{\bf s'}$ are (up to tensor
natural isomorphism) restrictions of $\mathcal{F}_{\bf s''}$ for any
tuple ${\bf s''}$ that contains ${\bf s}$ and ${\bf s'}$.

The conclusion follows from \Cref{eq:t-unn} by taking $\mathcal{F}$ to
be the unique (up to tensor natural isomorphism) tensor functor
restricting to $\mathcal{F}_{\bf s}$ on each $\bT_{\bf s}$.
\end{proof-of-univ-gen}


\bibliographystyle{abbrv}
\addcontentsline{toc}{section}{References}

\def\cftil#1{\ifmmode\setbox7\hbox{$\accent"5E#1$}\else
  \setbox7\hbox{\accent"5E#1}\penalty 10000\relax\fi\raise 1\ht7
  \hbox{\lower1.15ex\hbox to 1\wd7{\hss\accent"7E\hss}}\penalty 10000
  \hskip-1\wd7\penalty 10000\box7}
\begin{bibdiv}
\begin{biblist}

\bib{BW}{book}{
      author={Brzezinski, T.},
      author={Wisbauer, R.},
       title={Corings and comodules},
      series={London Mathematical Society Lecture Note Series},
   publisher={Cambridge University Press, Cambridge},
        date={2003},
      volume={309},
        ISBN={0-521-53931-5},
  url={http://dx.doi.org.offcampus.lib.washington.edu/10.1017/CBO9780511546495},
}

\bib{Chi14}{incollection}{
      author={Chirvasitu, A.},
       title={Three results on representations of {M}ackey {L}ie algebras},
    language={English},
        date={2014},
   booktitle={Developments and retrospectives in {L}ie theory},
      editor={Mason, Geoffrey},
      editor={Penkov, Ivan},
      editor={Wolf, Joseph~A.},
      series={Developments in Mathematics},
      volume={38},
   publisher={Springer International Publishing},
       pages={99\ndash 109},
         url={http://dx.doi.org/10.1007/978-3-319-09804-3_4},
}

\bib{us}{article}{
      author={{Chirvasitu}, A.},
      author={{Penkov}, I.},
       title={{Ordered tensor categories and representations of the {M}ackey
  {L}ie algebra of infinite matrices}},
        date={2015-12},
     journal={ArXiv e-prints},
      eprint={1512.08157},
}

\bib{CPS}{article}{
      author={Cline, E.},
      author={Parshall, B.},
      author={Scott, L.},
       title={Finite-dimensional algebras and highest weight categories},
        date={1988},
        ISSN={0075-4102},
     journal={J. Reine Angew. Math.},
      volume={391},
       pages={85\ndash 99},
}

\bib{DPS}{article}{
      author={{Dan-Cohen}, E.},
      author={{Penkov}, I.},
      author={{Serganova}, V.},
       title={{A Koszul category of representations of finitary {L}ie
  algebras}},
        date={2016},
     journal={Adv. Math.},
      volume={289},
       pages={250\ndash 278},
}

\bib{EHS}{article}{
      author={Entova-Aizenbud, I.},
      author={Hinich, V.},
      author={Serganova, V.},
       title={{Deligne categories and the limit of categories
  $\text{Rep}\big(\text{GL}(m|n)\big)$}},
        date={2015-11},
     journal={ArXiv Mathematics e-prints},
      eprint={arXiv:1511.07699 [math.RT]},
}

\bib{FSP}{article}{
      author={{Frenkel}, I.},
      author={{Serganova}, V.},
      author={{Penkov}, I.},
       title={{A categorification of the boson-fermion correspondence via
  representation theory of $\mathfrak{sl}(\infty)$}},
        date={2016},
     journal={Comm. Math. Phys.},
      volume={341},
      number={3},
       pages={911\ndash 931},
}

\bib{Fulton}{book}{
      author={Fulton, W.},
       title={Young tableaux, with applications to representation theory and
  geometry},
   publisher={Cambridge University Press},
        date={1997},
        ISBN={0-521-5674-6},
}

\bib{grk}{article}{
      author={{Garkusha}, G.},
       title={{Grothendieck Categories}},
        date={1999-09},
     journal={ArXiv Mathematics e-prints},
      eprint={math/9909030},
}

\bib{cats}{book}{
      author={Mac~Lane, S.},
       title={Categories for the working mathematician},
     edition={Second},
      series={Graduate Texts in Mathematics},
   publisher={Springer-Verlag, New York},
        date={1998},
      volume={5},
        ISBN={0-387-98403-8},
}

\bib{sym}{book}{
      author={Macdonald, I.~G.},
       title={Symmetric functions and {H}all polynomials},
     edition={Second},
      series={Oxford Mathematical Monographs},
   publisher={The Clarendon Press, Oxford University Press, New York},
        date={1995},
        ISBN={0-19-853489-2},
        note={With contributions by A. Zelevinsky, Oxford Science
  Publications},
}

\bib{M}{article}{
      author={{M}ackey, G.~W.},
       title={On infinite-dimensional linear spaces},
        date={1945},
        ISSN={0002-9947},
     journal={Trans. Amer. Math. Soc.},
      volume={57},
       pages={155\ndash 207},
}

\bib{PS1}{incollection}{
      author={Penkov, I.},
      author={Serganova, V.},
       title={Categories of integrable {$sl(\infty)$}-, {$o(\infty)$}-,
  {$sp(\infty)$}-modules},
        date={2011},
   booktitle={Representation theory and mathematical physics},
      series={Contemp. Math.},
      volume={557},
   publisher={Amer. Math. Soc., Providence, RI},
       pages={335\ndash 357},
         url={http://dx.doi.org/10.1090/conm/557/11038},
}

\bib{PS2}{incollection}{
      author={Penkov, I.},
      author={Serganova, V.},
       title={Tensor representations of {M}ackey {L}ie algebras and their dense
  subalgebras},
        date={2014},
   booktitle={Developments and retrospectives in {L}ie theory},
      series={Dev. Math.},
      volume={38},
   publisher={Springer, Cham},
       pages={291\ndash 330},
         url={http://dx.doi.org/10.1007/978-3-319-09804-3_14},
}

\bib{PS}{incollection}{
      author={Penkov, I.},
      author={Styrkas, K.},
       title={Tensor representations of classical locally finite {L}ie
  algebras},
        date={2011},
   booktitle={Developments and trends in infinite-dimensional {L}ie theory},
      series={Progr. Math.},
      volume={288},
   publisher={Birkh\"auser Boston Inc.},
     address={Boston, MA},
       pages={127\ndash 150},
         url={http://dx.doi.org/10.1007/978-0-8176-4741-4_4},
}

\bib{groth}{book}{
      author={Popescu, N.},
       title={Abelian categories with applications to rings and modules},
   publisher={Academic Press, London-New York},
        date={1973},
        note={London Mathematical Society Monographs, No. 3},
}

\bib{SS}{article}{
      author={Sam, S.~V.},
      author={Snowden, A.},
       title={Stability patterns in representation theory},
        date={2015},
        ISSN={2050-5094},
     journal={Forum Math. Sigma},
      volume={3},
       pages={e11, 108},
         url={http://dx.doi.org/10.1017/fms.2015.10},
}

\bib{ec1}{book}{
      author={Stanley, R.~P.},
       title={Enumerative combinatorics. {V}olume 1},
     edition={Second},
      series={Cambridge Studies in Advanced Mathematics},
   publisher={Cambridge University Press, Cambridge},
        date={2012},
      volume={49},
        ISBN={978-1-107-60262-5},
}

\bib{W}{book}{
      author={Wisbauer, R.},
       title={Foundations of module and ring theory},
     edition={German},
      series={Algebra, Logic and Applications},
   publisher={Gordon and Breach Science Publishers, Philadelphia, PA},
        date={1991},
      volume={3},
        ISBN={2-88124-805-5},
        note={A handbook for study and research},
}

\end{biblist}
\end{bibdiv}

\Addresses

\end{document}